\newcommand{\RR}{\mathbb{R}}
\newcommand{\epi}{\mathrm{epi\ }}
\newcommand{\hypo}{\mathrm{hypo\ }}
\newcommand{\graph}{\mathrm{graph\ }}
\newcommand{\dom}{\mathrm{dom\ }}
\newcommand{\proj}{\mathrm{proj}}
\newcommand{\interior}{\mathrm{int\ }}
\newcommand{\closure}{\mathrm{cl\ }}
\DeclareMathOperator*{\argmin}{\arg\!\min}
\mathchardef\mhyphen="2D 
\DeclareMathOperator*{\argmax}{\arg\!\max}
\newcommand{\varSpace}{\mathcal{E}}
\newcommand{\extPos}{\overline{\mathbb{R}}_{++}}
\newcommand{\radTransSup}[1]{#1^{\Gamma}}
\newcommand{\biradTransSup}[1]{#1^{\Gamma\Gamma}}
\newcommand{\radTransInf}[1]{#1_{\Gamma}}
\newcommand{\biradTransInf}[1]{#1_{\Gamma\Gamma}}
\newcommand{\radTransSet}[1]{\Gamma#1}
\newcommand{\biradTransSet}[1]{\Gamma\Gamma#1}
\definecolor{blue}{gray}{0.0}
\begin{document}

\title{Radial Duality\\ Part I: Foundations\thanks{This material is based upon work supported by the National Science Foundation Graduate Research Fellowship under Grant No. DGE-1650441. This work was partially done while the author was visiting the Simons Institute for the Theory of Computing. It was partially supported by the DIMACS/Simons Collaboration on Bridging Continuous and Discrete Optimization through NSF grant \#CCF-1740425.}
}


\author{Benjamin Grimmer
}


\institute{B. Grimmer \at
			Johns Hopkins University, Baltimore, MD.
              \email{grimmer@jhu.edu}
}

\date{Received: date / Accepted: date}

\maketitle

\begin{abstract}
Renegar~\cite{Renegar2016} introduced a novel approach to transforming generic conic optimization problems into unconstrained, uniformly Lipschitz continuous minimization.
We introduce {\it radial transformations} generalizing these ideas, equipped with an entirely new motivation and development that avoids any reliance on convex cones or functions. Of practical importance, this facilitates the development of new families of projection-free first-order methods applicable even in the presence of nonconvex objectives and constraint sets.
Our generalized construction of this radial transformation uncovers that it is dual (i.e., self-inverse) for a wide range of functions including all concave objectives. This gives a new duality relating optimization problems to their radially dual problem. For a broad class of functions, we characterize continuity, differentiability, and convexity under the radial transformation as well as develop a calculus for it.
This radial duality provides a foundation for designing projection-free radial optimization algorithms, which is carried out in the second part of this work.
\keywords{Optimization \and First-Order Methods \and Projection-free Methods \and Nonsmooth \and Projective Transformations}
\end{abstract}

\section{Introduction}
Renegar~\cite{Renegar2016} introduced a framework for conic programming (and by reduction, convex optimization), which turns such problems into uniformly Lipschitz optimization. After being radially transformed, a simple subgradient method can be applied and analyzed. Notably, even for constrained problems, such an algorithm maintains a feasible solution at each iteration while avoiding the use of orthogonal projections, which can often be a bottleneck for first-order methods.
Subsequently, Grimmer~\cite{Grimmer2017-radial-subgradient} showed that in the case of convex optimization, a simplified radial subgradient method can be applied with simpler and stronger convergence guarantees.
In~\cite{Renegar2019}, Renegar further showed that the transformation of hyperbolic cones is amenable to the application of smoothing techniques, notably improving on radial subgradient methods.

In this paper, we provide a wholly different development and generalization of the ideas behind
Renegar’s framework, which avoids relying on convex cones or functions as the central object. Instead our approach is based on the following simple projective transformation, which we dub the {\it radial point transformation}, 
\begin{equation*} 
\Gamma(x,u) = (x,1)/u
\end{equation*}
for any $(x,u)\in \varSpace\times \RR_{++}$, where $\varSpace$ is some finite-dimensional Euclidean space and $\RR_{++}=\{ u \in\mathbb{R} \mid u>0 \}$ is the set of positive real numbers. 
Applying this elementwise to a set $S\subseteq \varSpace\times\RR_{++}$ gives the {\it radial set transformation}, denoted by
\begin{equation*} 
\radTransSet{S} = \{\Gamma(x,u) \mid (x,u)\in S\}.
\end{equation*}

To motivate the nomenclature of calling these transformations radial, consider the transformation of a vertical line in $\varSpace\times\RR_{++}$: for any $x\in\varSpace$,
$$ \Gamma \{(x,\lambda) \mid \lambda\in\RR_{++}\} = \{\gamma(x,1) \mid \gamma\in\RR_{++}\}.$$
We see that this transformation maps vertical lines into rays extending from the origin (and rays into vertical lines since the point transformation is an involution (i.e., dual) $\Gamma\Gamma (x,u) = (x,u)$).

To extend this set operation to apply to functions, we consider functions $f\colon \varSpace\rightarrow \RR_{++}\cup\{0,\infty\}$ mapping into the extended positive reals (see Section~\ref{subsec:notation} for the formal definition of this range and associated notions of effective domains, epigraphs, etc). Then we define the {\it upper and lower radial function transformations} of $f$ as\footnote{Since we are considering functions mapping into $\RR_{++}\cup\{0,\infty\}$, if no $v>0$ satisfies $(y,v) \in \radTransSet{(\epi f)}$, we have the supremum defining $\radTransSup{f}(y)$ equal zero.}
\begin{equation*} 
\radTransSup{f}(y) = \sup\{v>0 \mid (y,v) \in \radTransSet{(\epi f)}\},
\end{equation*}
\begin{equation*} 
\radTransInf{f}(y) = \inf\{v>0 \mid (y,v) \in \radTransSet{(\hypo f)}\}
\end{equation*}
where $\epi f = \{(x,u)\in\varSpace\times\RR_{++} \mid f(x)\leq u\}$ denotes the epigraph of $f$ and $\hypo f = \{(x,u)\in\varSpace\times\RR_{++} \mid f(x)\geq u\}$ denotes the hypograph of $f$. The upper transformation has the interpretation of reshaping the epigraph of $f$ via the radial set transformation and returning the smallest function whose hypograph contains that set. Likewise, the lower transformation aims to turn the hypograph of $f$ into the epigraph of a new function. We find that for a wide range of functions, the duality of the point and set transformations carries over to the function transformations (see Theorem~\ref{thm:radial-function-duality}):
$$\biradTransSup{f}=f .$$

\paragraph{Connections To Prior Works.}
Noting $f^\Gamma(y) = \sup\{v>0 \mid v\cdot f(y/v) \leq 1\}$, this relates to the transformation used by Grimmer~\cite{Grimmer2017-radial-subgradient} and those of Renegar~\cite{Renegar2016,Renegar2019}. The upper and lower transformations coincide in the convex settings of these previous works but may diverge in the general setting considered herein. For our analysis, we primarily focus on the upper transformation, but equivalent results always hold for the lower transform. Artstein-Avidan and Milman~\cite{Artstein2011} (and the subsequent~\cite{Artstein2012}) consider the same underlying projective point transformation $\Gamma$ and the similar but quite different function transformation $\inf\{v>0 \mid (y,v) \in \radTransSet{(\epi f)}\}$. Considering this transformation limits their theory to the restrictive setting of nonnegative convex functions that minimize to value zero at the origin. As a result, their theory does not provide an interesting duality between optimization problems. The works~\cite{Artstein2017,Jian2013} develop calculus results similar to ours (in Section~\ref{sec:optimization}) for convex functions and use these radial operations to transform and solve various equations.

We remark on one other way to view the radial function transformation. Denote the {\it Minkowski gauge} of a set $S\subseteq\varSpace\times\RR_{++}$ at some $(y,v)\in\varSpace\times\RR_{++}$ by $\gamma_S((y,v)) = \inf\{\lambda >0 \mid (y,v)\in \lambda S\}$\footnote{Note the typical definition of this gauge allows $\lambda$ nonnegative, rather than positive. These two definitions are often equivalent, for example, for any convex $S$ containing $0$.}. Then the lower radial transformation can be restated as the restriction of this gauge to $v=1$
$$ \radTransInf{f}(y) = \gamma_{\hypo f}((y,1)).$$
This relationship to gauges motivates our notation of $\Gamma$ to denote the radial transformation. 
From this point of view, a connection can be made between this radial framework and the perspective duality considered by Aravkin et al~\cite{Aravkin2017}. They extend the theory of gauge duality developed by Freund~\cite{Freund1987}, which then applies to nonnegative convex functions by considering perspective functions. The resulting perspective dual optimization problem minimizes the function $\gamma_{\hypo f^*}((y,v))$ where $f^*$ is the Fenchel conjugate of $f$. Thus their perspective duality can be viewed as a combination of applying Fenchel duality and our radial machinery.

From this connection, we point out a key difference between this radial duality and these previous dualities. The classic theories of Lagrange and gauge duality are based on a conjugate or polar defined as a supremum over the dual vector space. In contrast, the radial dual and the Minkowski gauge are defined by a one-dimensional problem. This difference allows the radial dual to be efficiently computed numerically for generic problems using a linesearch or bisection, whereas evaluating the Fenchel conjugate of a function is as hard as optimizing over it.

\paragraph{Our Contributions.} This work serves to establish the foundations of radial transformations as a new addition to the optimization toolbox. The second part of this work leverages this machinery to develop new radial optimization algorithms. Our development establishes this tool in the following three ways: (i) The radial transformation is often dual (i.e., self-inverse) and enjoys rich structure stemming from this. (ii) The radial transformation produces a new duality between nonnegative optimization problems. For example, constraints are dually transformed into gauges, which allow algorithms to replace orthogonal projections with potentially cheaper, one-dimensional linesearches. We refer any numerically or algorithmically inclined reader to the motivating example of quadratic programming at the start of the second part of this work~\cite{Grimmer2021-part2} to see this machinery fully in action. (iii) The radial transformation is the unique operation of its kind.

\paragraph{Duality of the radial transformation.}
We precisely characterize the family of functions where the duality $\biradTransSup{f}=f$ holds through the star-convexity of their hypograph. Moreover, when this duality holds, we find that a number of important classes of functions are dual to each other or self-dual under the radial transformations. Namely,
\begin{align*}
f\text{ is upper semicontinuous } & \iff \radTransSup{f}\text{ is lower semicontinuous,}\\
f\text{ is continuous } & \iff \radTransSup{f}\text{ is continuous,}\\
f\text{ is concave } & \iff \radTransSup{f}\text{ is convex,}\\
f\text{ is quasiconcave } & \iff \radTransSup{f}\text{ is quasiconvex,}\\
f\text{ is } k \text{ times differentiable} & \iff \radTransSup{f}\text{ is } k \text{ times differentiable,}\\
f\text{ is analytic} & \iff \radTransSup{f}\text{ is analytic} 
\end{align*} 
under appropriate conditions (see Propositions~\ref{prop:semicontinuous-condition}, \ref{prop:continuous-condition}, \ref{prop:concave-conversion}, \ref{prop:quasiconcave-conversion}, and then \ref{prop:differentiable-preserved} for both differentiability conditions, respectively). We also derive a calculus for the radial transformations, providing formulas for the (sub)gradients and Hessians of $\radTransSup{f}$ based on those of $f$.

\paragraph{Radial duality between optimization problems.}
For a wide range of functions, $\Gamma(\epi f)$ is the hypograph of another function, and so
$ \hypo f^\Gamma = \Gamma(\epi f).$
As a result, for such functions, points in $\hypo f$ and $\epi f^\Gamma$ can be directly related by the bijection $\Gamma$ and its inverse (which is also $\Gamma$). This relation also applies to the maximizers of $f$ and the minimizers of $f^\Gamma$. Namely
for any function $f\colon \varSpace\rightarrow \RR_{++}\cup\{0,\infty\}$, consider the primal problem
\begin{equation}
p^* = \max_{x\in\varSpace} f(x). \label{eq:base-problem}
\end{equation}
Then the radially dual problem, given by
\begin{equation}
d^* = \min_{y\in\varSpace} \radTransSup{f}(y), \label{eq:radial-problem}
\end{equation}
has $\argmax f\times\{p^*\}= \Gamma\left(\argmin \radTransSup{f}\times\{d^*\}\right)$ under regularity conditions (see Proposition~\ref{prop:minimizer-characterization}). Hence maximizing $f$ is equivalent to minimizing $\radTransSup{f}$. 

The radially dual problem~\eqref{eq:radial-problem} can exhibit very different behavior than the original problem~\eqref{eq:base-problem}. 
For example, consider the function $f(x) = \sqrt{1 - \|x\|^2}$ defined on the unit ball, which has arbitrarily large gradients and Hessians as $x$ approaches the boundary of the ball. Despite this function's poor behavior, $\radTransSup{f}(y) = \sqrt{1+\|y\|^2}$ has full domain with gradients and Hessians bounded in norm by one everywhere. This structure is very appealing for the analysis of first-order optimization methods which tend to heavily rely on these quantities being bounded.
The second part of this work utilizes such structure arising from the radial duality developed herein to propose and analyze projection-free radial optimization methods. 

\paragraph{Uniqueness of the radial transformation.}
From our construction of the radial transformation, it is natural to ask if other interesting transformations of optimization problems can be given by reshaping the epigraph of a function. Under some basic assumptions (primarily that the reshaping is invertible and convexity preserving), there are only two transformations of this form, up to affine transformations: the trivial duality between maximizing a function and minimizing its negative and the nontrivial duality given by the radial transformation. These results are similar in spirit to the characterization of order isomorphisms by~\cite{Artstein2012}.

\paragraph{Outline}
In the remainder of this introduction, we sketch the usage of this radial machinery for constrained optimization and introduce needed notations. Section~\ref{sec:set-transform} develops theory for the radial point and set transformations on $\varSpace\times \RR_{++}$. Informed by this, Section~\ref{sec:function-transform} derives the core theory establishing the radial function transformations. Then Section~\ref{sec:optimization} develops the calculus and optimality relationships between the primal~\eqref{eq:base-problem} and radial dual~\eqref{eq:radial-problem}. Lastly, Section~\ref{sec:axioms} shows that this radial framework is the unique transformation of nonnegative-valued optimization problems of its type.

The second part of this work~\cite{Grimmer2021-part2} uses the radial theory developed here to design new ``radial'' first-order optimization methods. These methods avoid assumptions of Lipschitz continuity and the use orthogonal projections all together instead only relying on cheaper gauge evaluations. Convergence theory and some promising preliminary numerical results are presented there.

\subsection{Application to Constrained Optimization}
One useful facet of the duality between~\eqref{eq:base-problem} and~\eqref{eq:radial-problem} lies in how constraints are transformed and the subsequent algorithmic gains. We sketch these consequences here, which are further explored in the second part of this work.
Consider maximizing $f\colon\varSpace \rightarrow\extPos$ over some $S\subseteq \varSpace$ where  $\extPos = \RR_{++}\cup\{0,+\infty\}$.
Defining the following nonstandard indicator function of a set $S\subseteq \varSpace$ as
$$\hat \iota_S (x) = \begin{cases}
+\infty & \text{if\ } x\in S\\
0 & \text{if\ } x\not\in S,
\end{cases}$$
consider the constrained primal optimization problem
$$ \max_{x\in S} f(x) = \max_{x\in\varSpace} \min\{f(x),\hat \iota_S(x)\}.$$
Suppose $S$ is convex with $0\in S$. In this case, this nonstandard indicator function has $\hat \iota_S^\Gamma(y)$ equal its gauge $\gamma_S(y) = \inf\{\lambda> 0 \mid y\in\lambda S\}$.
Calculating the radially dual problem~\eqref{eq:radial-problem} (via Proposition~\ref{prop:radial-formulas}) yields
$$ \min_{y\in\varSpace} \max\{\radTransSup{f}(y), \gamma_S(y)\}.$$

This reformulation motivates the design of new first-order methods. Many methods for constrained optimization problems require orthogonal projections onto the feasible region at each iteration (which may be substantially more expensive than computing a single (sub)gradient, often dominating an algorithm's runtime). Evaluating the gauge of a set and computing one of its subgradients can be far cheaper than orthogonal projection as it requires at most a one-dimensional line search and computing a single normal vector of the constraint set. Observing and taking advantage of this structure in the context of conic programming was a central contribution of~\cite{Renegar2016}. 

For example, consider a generic polyhedron $S=\{x\in\varSpace \mid a_i^Tx\leq b_i\}$ with $b>0$ (which is equivalent to having $0$ lie in the interior of $S$). The gauge of such a polyhedron equals $\gamma_S(x) = \max\{a_i^Tx/b_i\}$, for which the function value and subgradients can be computed by a single matrix multiplication whereas orthogonal projection requires solving a quadratic program. As a second example, consider the translated cone of positive semidefinite matrices $S = \{X \mid X+I \succeq 0 \}$. The gauge of such a set is $\gamma_S(X) = \max\{-\lambda_{min}(X),0\}$, whose evaluation requires a minimum eigenvalue computation and subgradients follow from computing a minimum eigenvector. In contrast, orthogonal projection onto this cone requires a full spectral decomposition.

\subsection{Notation} \label{subsec:notation}
We primarily consider sets in $\varSpace \times \RR_{++}$, which inherits the standard Euclidean inner product and norm from $\varSpace\times \RR$. Denote the ball of radius $r>0$ around a point $(x,u)\in\varSpace\times\RR$ by
$$B((x,u),r) :=\{(x',u')\in\varSpace\times\RR \mid \|(x',u') - (x,u)\|\leq r\}.$$
Further, denote orthogonal projection onto a closed set $S \subseteq \varSpace\times \RR$ by
$$ \proj_S((x,u)) := \argmin\{\|(x',u') - (x,u)\| \mid (x',u')\in S\}.$$
Note $\proj_S$ is set valued and may not be a singleton if $S$ is not convex.

We consider functions $f \colon \varSpace \rightarrow \extPos$, where $\extPos = \RR_{++}\cup\{0,+\infty\}$ denotes the ``extended positive reals''. Here $0$ and $+\infty$ are the limit objects of $\RR_{++}$, mirroring the roles of $-\infty$ and $+\infty$ in the extended reals.
The effective domain of such a function is denoted by
$$\dom f := \{x\in\varSpace \mid f(x)\in\RR_{++}\}.$$
Such functions relate to $\varSpace \times \RR_{++}$ through their graphs, epigraphs, and hypographs
\begin{align*}
\graph f &:= \{(x,u)\in \varSpace \times \RR_{++} \mid f(x)= u\},\\
\epi f &:= \{(x,u)\in \varSpace \times \RR_{++} \mid f(x)\leq u\},\\
\hypo f &:= \{(x,u)\in \varSpace \times \RR_{++} \mid f(x)\geq u\}.
\end{align*}

We say a function $f \colon \varSpace \rightarrow \extPos$ is upper (lower) semicontinuous if $\hypo f$ ($\epi f$) is closed with respect to $\varSpace \times \RR_{++}$.
Equivalently, a function is upper semicontinuous if for all $x\in\varSpace$, $f(x)=\limsup_{x'\rightarrow x} f(x')$ and lower semicontinuous if $f(x)=\liminf_{x'\rightarrow x} f(x')$.

We say a function $f \colon \varSpace \rightarrow \extPos$ is concave (convex) if $\hypo f$ ($\epi f$) is convex.
The set of {\it convex normal vectors} of $S\subseteq \varSpace\times \RR$ at some $(x,u)\in S$ is
$$ N^C_{S}((x,u)) := \{(\zeta,\delta) \mid (\zeta,\delta)^T((x,u) - (x',u')) \geq 0\ \forall (x',u')\in S\}.$$
Then the {\it convex subdifferential} of a function $f$ at some $x\in\dom f$ is
$$ \partial_C f(x) := \{\zeta \mid (\zeta,-1)\in N^C_{\epi f}((x,f(x)))\}.$$
Likewise, the {\it convex supdifferential} of a function $f$ at some $x\in\dom f$ is
$$ \partial^C f(x) := \{\zeta \mid (-\zeta,1)\in N^C_{\hypo f}((x,f(x)))\}.$$
The elements of these differentials are referred to as convex subgradients or supgradients.

For sets and functions that are not convex, we consider the generalization given by proximal normals and differentials. The set of {\it proximal normal vectors} of a set $S\subseteq \varSpace\times \RR$ at some $(x,u)\in S$ is 
$$ N^P_{S}((x,u)) := \{(\zeta,\delta) \mid (x,u)\in \proj_S( (x,u) + \epsilon(\zeta,\delta)) \text{\ for some\ }\epsilon>0\}.$$
Then the {\it proximal subdifferential} of a function $f$ at some $x\in\dom f$ is
$$ \partial_P f(x) := \{\zeta \mid (\zeta,-1)\in N^P_{\epi f}((x,f(x)))\}.$$
Likewise, the {\it proximal supdifferential} of a function $f$ at some $x\in\dom f$ is
$$ \partial^P f(x) := \{\zeta \mid (-\zeta,1)\in N^P_{\hypo f}((x,f(x)))\}.$$
The elements of these differentials are referred to as proximal subgradients or supgradients.

\section{The Radial Set Transformation} \label{sec:set-transform}
We begin by observing a number of properties of the radial point and set transformations. 
Section~\ref{subsec:radial-normals} uses these to characterize the convex and proximal normal vectors of a radially transformed set.
Then Section~\ref{subsec:radial-set-examples} concludes with a number of examples and pictures illustrating the radial set transformation.
A careful understanding of this operation on sets forms the foundation for understanding the radial function transformation. 

One can easily check the point transformation is a continuous analytic bijection on $\varSpace\times\RR_{++}$. Further, both the point and set transformations are dual since
\begin{equation}
\biradTransSet{(x,u)} =\radTransSet{\frac{(x,1)}{u}}= \frac{(x/u, 1)}{1/u} = (x,u).\label{eq:radial-set-duality}
\end{equation}

Now we observe a few basic properties of the set transformation on any pair of sets $S,T\subseteq\varSpace\times\RR_{++}$.
First, since the point transformation is invertible (in fact, it is its own inverse), the set transformation preserves inclusions between sets, giving
\begin{equation}
S\subseteq T \iff \radTransSet{S} \subseteq \radTransSet{T}. \label{eq:radial-inclusion}
\end{equation}
The radial set transformation distributes over unions and intersections, giving
\begin{equation}
\radTransSet{(S \cap T)} = \radTransSet{S} \cap \radTransSet{T}, \label{eq:radial-intersection}
\end{equation}
\begin{equation}
\radTransSet{(S \cup T)} = \radTransSet{S} \cup \radTransSet{T}. \label{eq:radial-union}
\end{equation}

Since the radial point transformation is a projective transformation, convex sets, halfspaces, and ellipsoids map into convex sets, halfspaces, and ellipsoids, respectively. We give direct proofs of these results (see Proposition~\ref{prop:convex-set-radial}, \ref{prop:halfspace-radial}, and~\ref{prop:ellipsoid-radial}) in the appendix yielding formulas for radially dual halfspaces and ellipsoids in the latter two cases. 
\begin{proposition}\label{prop:convex-set-radial}
	A set $S\subseteq\varSpace\times\RR_{++}$ is convex if and only if $\radTransSet{S}$ is convex.
\end{proposition}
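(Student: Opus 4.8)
The plan is to show directly that $\Gamma$ maps every line segment in $\varSpace\times\RR_{++}$ onto a line segment via a monotone reparametrization; convexity of the image then follows immediately, and the converse is free by duality.

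First I would fix two points $(x_0,u_0),(x_1,u_1)\in\varSpace\times\RR_{++}$ and parametrize the segment joining them as $\lambda\mapsto\big((1-\lambda)x_0+\lambda x_1,\ (1-\lambda)u_0+\lambda u_1\big)$ for $\lambda\in[0,1]$. This segment lies entirely in $\varSpace\times\RR_{++}$ since $\RR_{++}$ is convex, so $\Gamma$ is defined along it. Introducing the new parameter
$$ \mu = \mu(\lambda) := \frac{\lambda u_1}{(1-\lambda)u_0+\lambda u_1}, $$
I would verify the identity
$$ \Gamma\big((1-\lambda)x_0+\lambda x_1,\ (1-\lambda)u_0+\lambda u_1\big) \;=\; (1-\mu)\,\Gamma(x_0,u_0) + \mu\,\Gamma(x_1,u_1), $$
which is a short computation: writing $\Gamma(x,u)=(x/u,1/u)$ and clearing the common denominator $(1-\lambda)u_0+\lambda u_1$, both the $\varSpace$-component and the $\RR$-component match. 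Since $u_0,u_1>0$, the map $\lambda\mapsto\mu(\lambda)$ is a continuous, strictly increasing bijection of $[0,1]$ onto itself (its derivative is $u_0u_1/((1-\lambda)u_0+\lambda u_1)^2>0$), so $\Gamma$ carries the segment $[(x_0,u_0),(x_1,u_1)]$ exactly onto the segment $[\Gamma(x_0,u_0),\Gamma(x_1,u_1)]$.

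With this in hand, suppose $S$ is convex and take $p,q\in\radTransSet{S}$. Writing $p=\Gamma(a)$ and $q=\Gamma(b)$ with $a,b\in S$, the segment $[a,b]$ lies in $S$ by convexity, so $[p,q]=\Gamma\big([a,b]\big)\subseteq\radTransSet{S}$; hence $\radTransSet{S}$ is convex. For the converse I would apply this implication to the set $\radTransSet{S}$ in place of $S$ and invoke the duality $\biradTransSet{S}=S$ from \eqref{eq:radial-set-duality}: if $\radTransSet{S}$ is convex then $\radTransSet{(\radTransSet{S})}=S$ is convex.

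The only genuine work is the reparametrization identity, which is routine algebra; once it is established both directions are immediate, so I do not anticipate a real obstacle. (The same substitution $(x,u)=\Gamma(y,v)$ should also yield the promised explicit formulas for radially dual halfspaces and ellipsoids, by tracking where the defining linear inequality or quadratic equation of the set goes.)
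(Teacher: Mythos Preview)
Your proposal is correct and is essentially the same argument the paper gives in its appendix: both compute $\Gamma$ of a convex combination and recognize it as a convex combination of the $\Gamma$-images with the reweighted parameter $\mu=\lambda u_1/((1-\lambda)u_0+\lambda u_1)$, then invoke the duality $\Gamma\Gamma S=S$ for the converse. If anything, you are slightly more explicit than the paper in verifying that $\lambda\mapsto\mu(\lambda)$ is a bijection of $[0,1]$ onto itself, which is what guarantees the \emph{entire} segment $[p,q]$ (and not just a subsegment) lands in $\radTransSet{S}$.
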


In particular, consider the radial transformation of any halfspace in $\varSpace\times \RR_{++}$. Direct manipulation of its definition shows that the radial transformation of a halfspace is another halfspace.
\begin{proposition}\label{prop:halfspace-radial}
	A set $S\subseteq \varSpace\times\RR_{++}$ is a halfspace if and only if $\radTransSet{S}$ is a halfspace.
	In particular, for any
	$S = \left\{(x',u')\in\varSpace\times\RR_{++} \mid \begin{bmatrix} \zeta \\ \delta 
	\end{bmatrix}^T\begin{bmatrix} x'-x \\ u'-u 
	\end{bmatrix}\leq 0\right\},$
	letting $(y,v) = \Gamma(x,u)$, $\radTransSet{S}$ is the following halfspace
	$$\radTransSet{S} = \left\{(y',v')\in\varSpace\times\RR_{++} \mid  \begin{bmatrix} \zeta \\ -(\zeta,\delta)^T(x,u)
	\end{bmatrix}^T\begin{bmatrix} y'-y \\ v'-v 
	\end{bmatrix} \leq 0 \right\}.$$
\end{proposition}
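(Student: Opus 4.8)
The plan is to lean on the involution property \eqref{eq:radial-set-duality} to collapse the biconditional into a single explicit computation. Since $\biradTransSet{S}=S$, it suffices to show that the radial transform of the displayed halfspace $S$ is exactly the displayed set $\radTransSet{S}$: the forward implication is then immediate, and the reverse follows by applying the same computation to $\radTransSet{S}$ and using $S=\biradTransSet{S}$. So the entire proposition reduces to verifying the stated formula for $\radTransSet{S}$.

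To do this I would use that $(y',v')\in\radTransSet{S}$ if and only if $\Gamma(y',v')=(y'/v',\,1/v')\in S$, again because $\Gamma$ is its own inverse. Abbreviating the defining inequality of $S$ as $\zeta^T x'+\delta u'\le c$ with $c:=(\zeta,\delta)^T(x,u)$, this substitution gives $\zeta^T(y'/v')+\delta(1/v')\le c$; multiplying through by $v'>0$ (which preserves the direction of the inequality) and rearranging yields the linear inequality $\zeta^T y'-c\,v'\le -\delta$ on $\varSpace\times\RR_{++}$.

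It remains to match this with the claimed form $\zeta^T(y'-y)-c(v'-v)\le 0$, i.e.\ to check that the constant $\zeta^T y-c\,v$ equals $-\delta$. This is a one-line calculation: since $(y,v)=\Gamma(x,u)=(x/u,\,1/u)$, we get $\zeta^T y-c\,v=\zeta^T x/u-(\zeta^T x+\delta u)/u=-\delta$. To confirm this genuinely describes a halfspace, I would also check the normal vector $(\zeta,\,-(\zeta,\delta)^T(x,u))$ is nonzero: if $\zeta\neq 0$ this is clear, and if $\zeta=0$ then $\delta\neq 0$ (otherwise $S$ is not a halfspace), so the last coordinate $-\delta u$ is nonzero because $u>0$.

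There is no real obstacle here; the content is entirely a direct manipulation. The only points needing a little care are tracking the sign when clearing the positive denominator $v'$, reading ``halfspace'' as relative to the open region $\varSpace\times\RR_{++}$ (so that $u'\leftrightarrow 1/v'$ is a bijection between the relevant regions and no points are lost), and the short constant-term verification above. An alternative, slightly slicker route would be to invoke Proposition~\ref{prop:convex-set-radial} together with the fact that $\Gamma$ and its inverse map the boundary hyperplane of $S$ into an affine set of the same dimension, but the explicit substitution above is cleaner and also delivers the stated formula for free.
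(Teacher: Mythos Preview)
Your proposal is correct and follows essentially the same route as the paper: both reduce the biconditional to one direction via the involution $\biradTransSet{S}=S$, then compute $\radTransSet{S}$ by substituting $(x',u')=(y'/v',1/v')$ into the defining inequality, clearing the positive denominator $v'$, and rearranging into the claimed halfspace form centered at $(y,v)=\Gamma(x,u)$. Your extra checks (nonvanishing of the new normal vector, explicit verification of the constant term) are not in the paper's proof but are harmless additions.
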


We say that a set is {\it polyhedral} if it is the intersection of finitely many halfspaces and $\varSpace\times\RR_{++}$. Then as an immediate consequence of Proposition~\ref{prop:halfspace-radial} and~\eqref{eq:radial-intersection}, being polyhedral is preserved under the radial set transformation.
\begin{corollary}\label{cor:polyhedral-sets}
	A set $S\subseteq \varSpace\times\RR_{++}$ is polyhedral if and only if $\radTransSet{S}$ is polyhedral.
\end{corollary}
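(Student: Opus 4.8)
The plan is to reduce the statement directly to Proposition~\ref{prop:halfspace-radial} together with the distributivity identity~\eqref{eq:radial-intersection}, exactly as the preceding text advertises. First I would prove the forward implication. Suppose $S$ is polyhedral, so by definition $S = (\varSpace\times\RR_{++}) \cap H_1 \cap \dots \cap H_k$ for finitely many halfspaces $H_1,\dots,H_k \subseteq \varSpace\times\RR_{++}$. Applying~\eqref{eq:radial-intersection} inductively across these $k+1$ sets gives $\radTransSet{S} = \radTransSet{(\varSpace\times\RR_{++})} \cap \radTransSet{H_1} \cap \dots \cap \radTransSet{H_k}$. Each $\radTransSet{H_i}$ is again a halfspace by Proposition~\ref{prop:halfspace-radial} (which even supplies its explicit description), and $\radTransSet{(\varSpace\times\RR_{++})} = \varSpace\times\RR_{++}$ since the point transformation $\Gamma$ is a bijection of $\varSpace\times\RR_{++}$ onto itself. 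Hence $\radTransSet{S}$ is an intersection of finitely many halfspaces with $\varSpace\times\RR_{++}$, i.e., polyhedral.

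Then the converse comes for free from duality. If $\radTransSet{S}$ is polyhedral, applying the forward implication to the set $\radTransSet{S}$ shows $\biradTransSet{S}$ is polyhedral, and $\biradTransSet{S} = S$ since the set transformation is its own inverse (cf.~\eqref{eq:radial-set-duality}). So $S$ is polyhedral, and in fact only one implication genuinely needs to be carried out.

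I do not expect any substantive obstacle here; the proof is essentially bookkeeping on top of Proposition~\ref{prop:halfspace-radial}. The only points meriting a line of care are the degenerate cases in the definition of ``polyhedral'': the empty intersection $S = \varSpace\times\RR_{++}$ must be accommodated, which is precisely why one records $\radTransSet{(\varSpace\times\RR_{++})} = \varSpace\times\RR_{++}$, and identity~\eqref{eq:radial-intersection} is stated for two sets, so it must be iterated to handle an arbitrary finite intersection. Everything else has already been established.
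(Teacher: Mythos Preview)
Your proposal is correct and matches the paper's own reasoning exactly: the paper does not even write out a proof, stating only that the corollary is ``an immediate consequence of Proposition~\ref{prop:halfspace-radial} and~\eqref{eq:radial-intersection},'' which is precisely the reduction you carry out (with the duality~\eqref{eq:radial-set-duality} handling the converse).
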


Lastly, we consider the radial transformation of ellipsoids. A set $S\subseteq\varSpace\times\RR$ is an ellipsoid if for some center $(x,u)$ and positive definite linear mapping $H$,
\begin{equation}
S = \left\{(x',u')\in\varSpace\times\RR \mid \begin{bmatrix} x'-x \\ u'-u \end{bmatrix}^T H \begin{bmatrix} x'-x \\ u'-u \end{bmatrix} \leq 1\right\}. \label{eq:basic-ellipsoid}
\end{equation}
Similar to halfspaces, the radial transformation of such an ellipsoid in $\varSpace\times\RR_{++}$ is an ellipsoid in $\varSpace\times\RR_{++}$. Curiously, the center of $\radTransSet{S}$ is not $\Gamma(x,u)$ (as one might expect), but rather depends on $H$.

\begin{proposition}\label{prop:ellipsoid-radial}
	A set $S\subseteq \varSpace\times\RR_{++}$ is an ellipsoid if and only if $\radTransSet{S}$ is an ellipsoid.
\end{proposition}

\subsection{Normal Vectors Under the Radial Set Transformation}\label{subsec:radial-normals}
Next we consider how the normal vectors of a set relate to those of its radial transformation. Proposition~\ref{prop:halfspace-radial}'s description of transformed halfspaces characterizes convex normal vectors under the transformation. Combining this result with Proposition~\ref{prop:ellipsoid-radial}'s description of transformed ellipsoids gives a characterization for proximal normal vectors.
\begin{proposition}\label{prop:convex-normals}
	For any $S\subseteq\varSpace\times\RR_{++}$, all $(y,v)\in \Gamma S$ have
	$$N^C_{\Gamma S}((y,v)) = \left\{\begin{bmatrix} \zeta \\ -(\zeta,\delta)^T(x,u)
	\end{bmatrix} \mid \begin{bmatrix} \zeta \\ \delta
	\end{bmatrix}\in N^C_{S}((x,u))\right\}$$
	where $(x,u) = \Gamma(y,v)$.
\end{proposition}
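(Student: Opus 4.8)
The plan is to reduce the statement to Proposition~\ref{prop:halfspace-radial}, exploiting the fact that the convex normal cone at a point is exactly the set of (outer) normals of supporting halfspaces at that point. Fix $(y,v)\in\radTransSet{S}$ and write $(x,u)=\Gamma(y,v)$, so that $(x,u)\in S$ by duality~\eqref{eq:radial-set-duality}. The key observation is that $(\zeta,\delta)\in N^C_S((x,u))$ if and only if the halfspace
$$
H = \left\{(x',u')\in\varSpace\times\RR_{++}\ \middle|\ \begin{bmatrix}\zeta\\ \delta\end{bmatrix}^T\begin{bmatrix}x'-x\\ u'-u\end{bmatrix}\leq 0\right\}
$$
contains $S$; this is just the definition of $N^C_S$ unpacked (with the convention that $\varSpace\times\RR_{++}$ is the ambient space). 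So I would first record this equivalence, then apply the radial set transformation.

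Next, by the inclusion-preservation property~\eqref{eq:radial-inclusion}, $S\subseteq H$ holds if and only if $\radTransSet{S}\subseteq\radTransSet{H}$. Proposition~\ref{prop:halfspace-radial} identifies $\radTransSet{H}$ explicitly: since $(y,v)=\Gamma(x,u)$, it is the halfspace
$$
\radTransSet{H} = \left\{(y',v')\in\varSpace\times\RR_{++}\ \middle|\ \begin{bmatrix}\zeta\\ -(\zeta,\delta)^T(x,u)\end{bmatrix}^T\begin{bmatrix}y'-y\\ v'-v\end{bmatrix}\leq 0\right\},
$$
and this halfspace has $(y,v)$ on its boundary. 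Therefore $\radTransSet{S}\subseteq\radTransSet{H}$ says precisely that the vector $\bigl(\zeta,\ -(\zeta,\delta)^T(x,u)\bigr)$ lies in $N^C_{\radTransSet{S}}((y,v))$. Chaining the two equivalences gives: $(\zeta,\delta)\in N^C_S((x,u))$ iff $\bigl(\zeta,-(\zeta,\delta)^T(x,u)\bigr)\in N^C_{\radTransSet{S}}((y,v))$, which is one inclusion of the claimed identity (the map $(\zeta,\delta)\mapsto(\zeta,-(\zeta,\delta)^T(x,u))$ is the image map appearing on the right-hand side).

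For the reverse direction, I would apply the forward direction to the set $\radTransSet{S}$ in place of $S$ and to the point $(y,v)$, using $\biradTransSet{S}=S$ and $\biradTransSet{(y,v)}=(x,u)$ from~\eqref{eq:radial-set-duality}: every $(\xi,\eta)\in N^C_{\radTransSet{S}}((y,v))$ arises as $(\xi,\eta)=\bigl(\zeta,-(\zeta,\delta)^T(x,u)\bigr)$ for a suitable $(\zeta,\delta)\in N^C_S((x,u))$, provided one checks the transformation formula composed with itself is the identity on normal vectors. Concretely, applying the formula twice sends $(\zeta,\delta)$ to $\bigl(\zeta,-(\zeta,\delta)^T(x,u)\bigr)$ and then to $\bigl(\zeta,\ -(\zeta,-(\zeta,\delta)^T(x,u))^T(y,v)\bigr)$; a short computation using $(y,v)=(x,1)/u$ must return $(\zeta,\delta)$, and verifying this consistency is the one genuinely computational — though elementary — step. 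The main obstacle is bookkeeping rather than conceptual: one must be careful that the ``ambient space'' in the definition of the normal cone is $\varSpace\times\RR_{++}$ (not $\varSpace\times\RR$), so that a halfspace really means the intersection with $\varSpace\times\RR_{++}$, and that Proposition~\ref{prop:halfspace-radial} is being invoked with exactly the right identification of $(x,u)$, $(y,v)$, $\zeta$, and $\delta$; once the dictionary is fixed, the argument is a two-line application of~\eqref{eq:radial-inclusion} and Proposition~\ref{prop:halfspace-radial}.
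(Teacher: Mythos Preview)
Your proposal is correct and follows essentially the same route as the paper: rewrite $(\zeta,\delta)\in N^C_S((x,u))$ as a halfspace containment, push it through $\Gamma$ via~\eqref{eq:radial-inclusion} and Proposition~\ref{prop:halfspace-radial} to obtain the $\supseteq$ inclusion, and then invoke duality $\Gamma\Gamma S=S$ for the reverse inclusion, checking that the normal-vector map composed with itself is the identity. The paper carries out exactly these steps; your only extra remark about the ambient space being $\varSpace\times\RR_{++}$ is a fair point of care but does not change the argument.
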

\begin{proof}
	For any $(x,u)\in S$, $(\zeta,\delta)\in N^C_{S}((x,u))$ if and only if
	$$S \subseteq \left\{(x',u')\in\varSpace\times\RR_{++} \mid \begin{bmatrix} \zeta \\ \delta 
	\end{bmatrix}^T\begin{bmatrix} x'-x \\ u'-u 
	\end{bmatrix}\leq 0\right\}.$$
	Letting $(y,v) = \Gamma(x,u)$, Proposition~\ref{prop:halfspace-radial} and~\eqref{eq:radial-inclusion} imply
	$$\Gamma S \subseteq \left\{(y',v')\in\varSpace\times\RR_{++} \mid  \begin{bmatrix} \zeta \\ -(\zeta,\delta)^T(x,u)
	\end{bmatrix}^T\begin{bmatrix} y'-y \\ v'-v 
	\end{bmatrix} \leq 0 \right\}.$$
	Thus $\left(\zeta,\ -(\zeta,\delta)^T(x,u)\right)\in N^C_{\Gamma S}((y,v))$. This gives the containment
	$$N^C_{\Gamma S}((y,v)) \supseteq \left\{\begin{bmatrix} \zeta \\ -(\zeta,\delta)^T(x,u)
	\end{bmatrix} \mid \begin{bmatrix} \zeta \\ \delta
	\end{bmatrix}\in N^C_{S}((x,u))\right\}$$
	and repeating the argument, replacing $S$ by $\Gamma S$, gives
	$$N^C_{S}((x,u))=N^C_{\Gamma\Gamma S}((x,u)) \supseteq \left\{\begin{bmatrix} \zeta' \\ -(\zeta',\delta')^T(y,v)
	\end{bmatrix} \mid \begin{bmatrix} \zeta' \\ \delta'
	\end{bmatrix}\in N^C_{\Gamma S}((y,v))\right\}.$$
	Applying these two containments in succession shows the claimed formula
	\begin{align*}
	N^C_{\Gamma S}((y,v)) &\supseteq \left\{\begin{bmatrix} \zeta \\ -(\zeta,\delta)^T(x,u)
	\end{bmatrix} \mid \begin{bmatrix} \zeta \\ \delta
	\end{bmatrix}\in N^C_{S}((x,u))\right\}\\
	&\supseteq \left\{\begin{bmatrix} \zeta' \\ -(\zeta',-(\zeta',\delta')^T(y,v))^T(x,u)
	\end{bmatrix} \mid \begin{bmatrix} \zeta' \\ \delta'
	\end{bmatrix}\in N^C_{\Gamma S}((y,v))\right\}\\
	&=N^C_{\Gamma S}((y,v)). \tag*{\qed}
	\end{align*}
\end{proof}

\begin{proposition}\label{prop:proximal-normals}
	For any $S\subseteq\varSpace\times\RR_{++}$, all $(y,v)\in \Gamma S$ have
	$$N^P_{\Gamma S}((y,v)) = \left\{\begin{bmatrix} \zeta \\ -(\zeta,\delta)^T(x,u)
	\end{bmatrix} \mid \begin{bmatrix} \zeta \\ \delta
	\end{bmatrix}\in N^P_{S}((x,u))\right\}$$
	where $(x,u) = \Gamma(y,v)$.
\end{proposition}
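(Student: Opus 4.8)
The plan is to mirror the bootstrapping structure of the proof of Proposition~\ref{prop:convex-normals}: prove one inclusion, reapply it with $S$ replaced by $\Gamma S$ (using that $\Gamma$ is its own inverse), and then compose the two inclusions to obtain equality. The only genuinely new ingredient is that a proximal normal is certified by a supporting \emph{ball} rather than a supporting halfspace, so Proposition~\ref{prop:ellipsoid-radial} on transformed ellipsoids takes over the role that Proposition~\ref{prop:halfspace-radial} played for convex normals.

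First I would unpack the definition: $(\zeta,\delta)\in N^P_S((x,u))$ exactly when there is a closed ball $B=B((x,u)+\epsilon(\zeta,\delta),\,\epsilon\|(\zeta,\delta)\|)$ with $(x,u)\in\bdry B$ and $\interior B\cap S=\emptyset$. Shrinking $\epsilon$ replaces $B$ by an internally tangent smaller ball through $(x,u)$ that still certifies the same normal, so $B$ may be taken small enough to lie inside the open set $\varSpace\times\RR_{++}$. Now apply $\Gamma$, which is a homeomorphism of $\varSpace\times\RR_{++}$ onto itself: by Proposition~\ref{prop:ellipsoid-radial}, $E:=\Gamma B$ is an ellipsoid, and since $\Gamma$ maps interiors to interiors we get $(y,v)=\Gamma(x,u)\in\bdry E$ and $\interior E\cap\Gamma S=\emptyset$. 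Feeding the supporting halfspace of the ball $B$ at $(x,u)$ through Proposition~\ref{prop:halfspace-radial} and~\eqref{eq:radial-inclusion} identifies a supporting halfspace of $E$ at $(y,v)$, from which one reads off that $\bigl(\zeta,\,-(\zeta,\delta)^{T}(x,u)\bigr)$ is an inward normal of the ellipsoid $E$ at $(y,v)$. Since $E$ is a smooth, strictly convex body, there is an inscribed ball $B'\subseteq E$ tangent to $\bdry E$ at $(y,v)$; its center lies along that inward normal, its interior sits inside $\interior E$ and hence misses $\Gamma S$, and $(y,v)\in\bdry B'$. Reading off the definition of proximal normal, this yields $\bigl(\zeta,\,-(\zeta,\delta)^{T}(x,u)\bigr)\in N^P_{\Gamma S}((y,v))$, i.e., the inclusion ``$\supseteq$'' of the claimed formula.

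Applying this inclusion verbatim with the pair $(S,(x,u))$ replaced by $(\Gamma S,(y,v))$ shows that every $(\zeta',\delta')\in N^P_{\Gamma S}((y,v))$ satisfies $\bigl(\zeta',\,-(\zeta',\delta')^{T}(y,v)\bigr)\in N^P_{\Gamma\Gamma S}((x,u))=N^P_S((x,u))$. Substituting this back into the map $(\zeta,\delta)\mapsto\bigl(\zeta,-(\zeta,\delta)^{T}(x,u)\bigr)$ and using $(x,u)=(y/v,1/v)$, a one-line cancellation (identical to the one closing the proof of Proposition~\ref{prop:convex-normals}) shows the image is exactly $(\zeta',\delta')$. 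Hence the reverse inclusion ``$\subseteq$'' holds as well, giving the stated equality.

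I expect the main obstacle to be the geometric conversion step: turning ``$E$ is an ellipsoid with $(y,v)\in\bdry E$ and $\Gamma S$ outside $\interior E$'' into a bona fide proximal-normal certificate for $\Gamma S$, which requires the inscribed-ball argument together with the orientation bookkeeping needed to verify that the inscribed ball's center points precisely in the direction $\bigl(\zeta,-(\zeta,\delta)^{T}(x,u)\bigr)$ predicted by Proposition~\ref{prop:halfspace-radial}. A minor but necessary preliminary is the reduction to balls small enough that $\Gamma$ may be applied to them and preserves the relevant topology; everything after the conversion step is a direct transcription of the convex-normals argument.
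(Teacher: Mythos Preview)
Your proposal is correct and follows essentially the same route as the paper: push a certifying ball through $\Gamma$ via Proposition~\ref{prop:ellipsoid-radial}, read off the normal direction of the resulting ellipsoid from the halfspace/convex-normal formula, inscribe a small ball in that ellipsoid to recover a proximal certificate, and then bootstrap the reverse inclusion exactly as in Proposition~\ref{prop:convex-normals}. The only cosmetic differences are that the paper phrases the ball condition as $E\cap S=\{(x,u)\}$ rather than $\interior B\cap S=\emptyset$, and it invokes Proposition~\ref{prop:convex-normals} (already packaged) instead of going back to Proposition~\ref{prop:halfspace-radial} to identify the normal of $\Gamma E$.
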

\begin{proof}
	Consider any $(x,u)\in S$ and $(\zeta,\delta)\in N^P_{S}((x,u))$. Then for some $\epsilon>0$, the ball
	$$E = B\left(\begin{bmatrix} x \\ u\end{bmatrix}+\epsilon\begin{bmatrix} \zeta \\ \delta\end{bmatrix},\ \epsilon\left\|\begin{bmatrix} \zeta \\ \delta\end{bmatrix}\right\|\right)\subset \varSpace\times\RR_{++}$$
	has $E\cap S = \{(x,u)\}$. 
	Recall from Proposition~\ref{prop:ellipsoid-radial} that $\Gamma E$ is an ellipsoid.
	Applying~\eqref{eq:radial-intersection} implies $\Gamma E \cap \Gamma S = \{(y,v)\}$ where $(y,v) = \Gamma(x,u)$.
	Since $-(\zeta,\delta)\in N^C_{E}((x,u))$, Proposition~\ref{prop:convex-normals} implies $\left(-\zeta,\ (\zeta,\delta)^T(x,u)\right)\in N^C_{\Gamma E}((y,v))$.
	Then for sufficiently small $\epsilon'>0$, the ball
	$$E'=B\left(\begin{bmatrix} y \\ v\end{bmatrix}+\epsilon'\begin{bmatrix} \zeta \\ -(\zeta,\delta)^T(x,u)
	\end{bmatrix},\ \epsilon'\left\|\begin{bmatrix} \zeta \\ -(\zeta,\delta)^T(x,u)
	\end{bmatrix}\right\|\right)$$
	lies in $\Gamma E$, and hence has $E'\cap \Gamma S = \{(y,v)\}$. Thus $\left(\zeta,\ -(\zeta,\delta)^T(x,u)\right)\in N^P_{\Gamma S}((y,v))$, and so
	$$N^P_{\Gamma S}((y,v)) \supseteq \left\{\begin{bmatrix} \zeta \\ -(\zeta,\delta)^T(x,u)
	\end{bmatrix} \mid \begin{bmatrix} \zeta \\ \delta
	\end{bmatrix}\in N^P_{S}((x,u))\right\}.$$
	As shown in the proof of Proposition~\ref{prop:convex-normals}, the claimed formula follows from this containment and the dual containment given by replacing $S$ by $\Gamma S$. \qed
\end{proof}
\subsection{Examples and Pictures}\label{subsec:radial-set-examples}
In Figures~\ref{fig:set-p1} through~\ref{fig:set-d5}, we give five examples of pairs of sets in $\RR\times \RR_{++}$ radially dual to each other. Each figure includes the horizontal line $L = \{(x,1)\mid x\in\RR\}$ as a black dashed line. Observe that $L$ is exactly the set of fixed points of the radial point transformation. Further, points above $L$ always map into points below $L$ (and vice versa).

The first two example pairs given in Figures~\ref{fig:set-p1} and~\ref{fig:set-d1} and Figures~\ref{fig:set-p2} and~\ref{fig:set-d2} show the radial transformation of a halfspace and a polyhedron (which must be a halfspace and a polyhedron by Proposition~\ref{prop:halfspace-radial} and Corollary~\ref{cor:polyhedral-sets}). Examining the transformation of the horizontal and vertical faces of the square in Figure~\ref{fig:set-p2} demonstrates two simple properties of the radial set transformation: (i) horizontal lines map into horizontal lines and (ii) vertical lines map into rays extending away from the origin (and vice versa).

Figures~\ref{fig:set-p3} and~\ref{fig:set-d3} show the radial transformation of an ellipsoid (which must be an ellipsoid by Proposition~\ref{prop:ellipsoid-radial}). Figures~\ref{fig:set-p4} and~\ref{fig:set-d4} consider the radial set transformation of a parabola, which is nearly an ellipsoid in $\RR\times\RR_{++}$ but it approaches height $0$ at the origin.

Our last pair of examples in Figures~\ref{fig:set-p5} and~\ref{fig:set-d5} show the radial set transformation of a sine wave. Notice that the resulting set is not the graph of any function. As we now transition to discussing our radial function transformations, considering how graphs, epigraphs, and hypographs behave under the set transformation provides key intuitions. The fact that the epigraph of our example parabola does not transform into the hypograph of another function and that the graph of our example sine wave does not transform into the graph of another function (as we will see) correspond to radial duality not holding for these function.

\begin{figure}
	\centering
	\begin{minipage}{.4\textwidth}
		\centering
		\includegraphics[width=.99\linewidth]{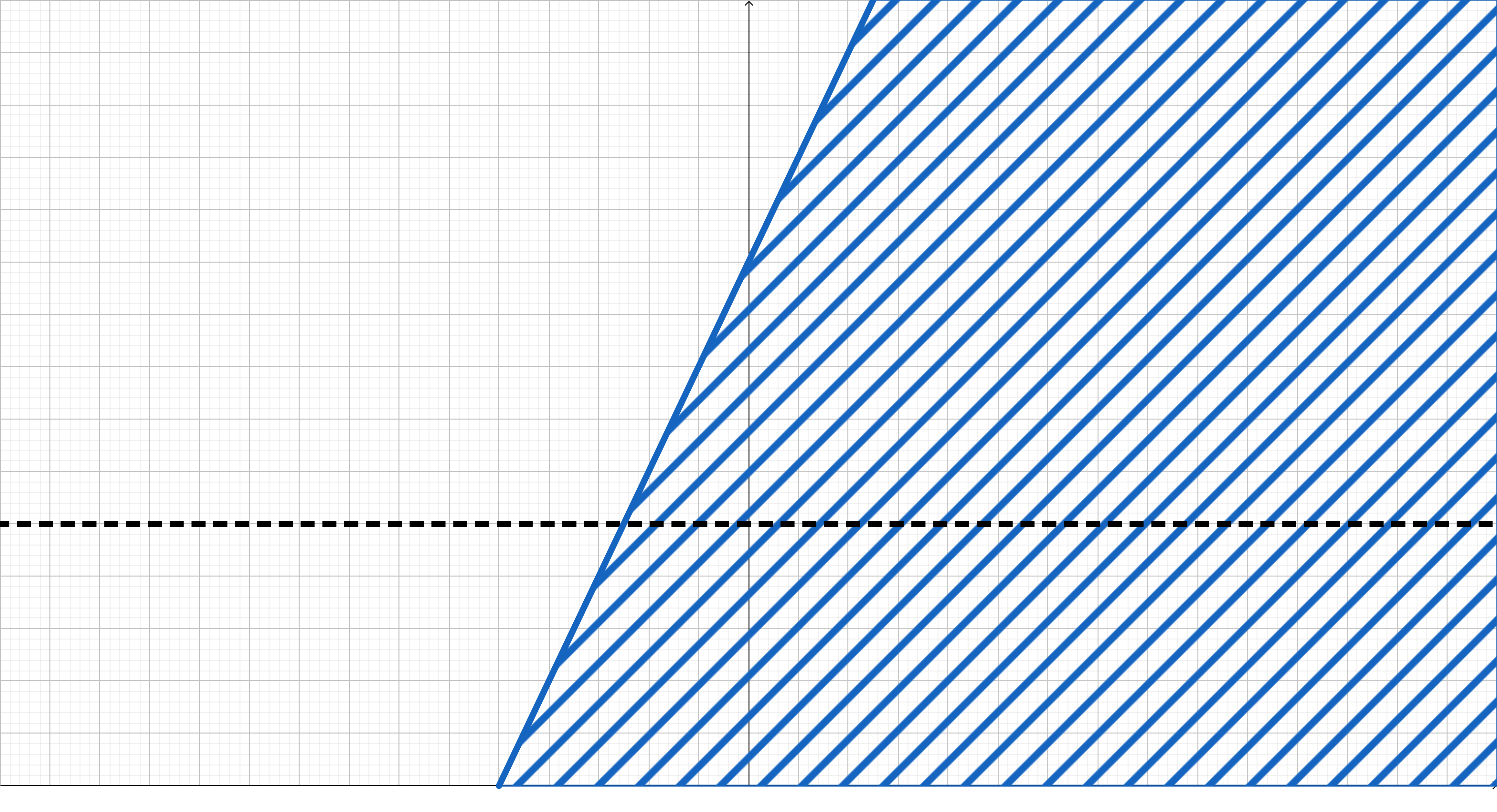}
		\captionof{figure}{A halfspace.}
		\label{fig:set-p1}
	\end{minipage}%
	$\iff$
	\begin{minipage}{.4\textwidth}
		\centering
		\includegraphics[width=.99\linewidth]{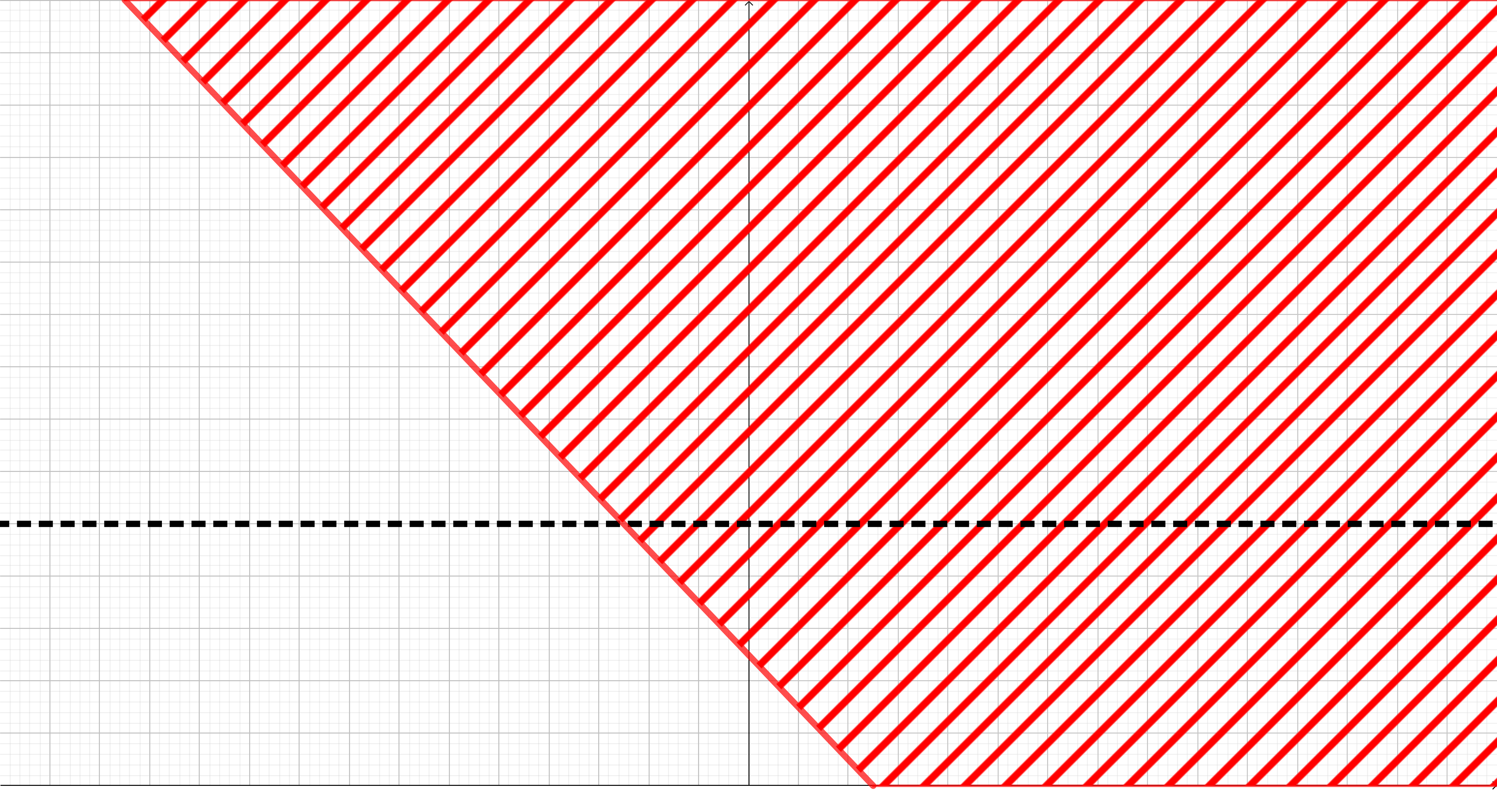}
		\captionof{figure}{Dual halfspace.}
		\label{fig:set-d1}
	\end{minipage}
	
	\begin{minipage}{.4\textwidth}
		\centering
		\includegraphics[width=.99\linewidth]{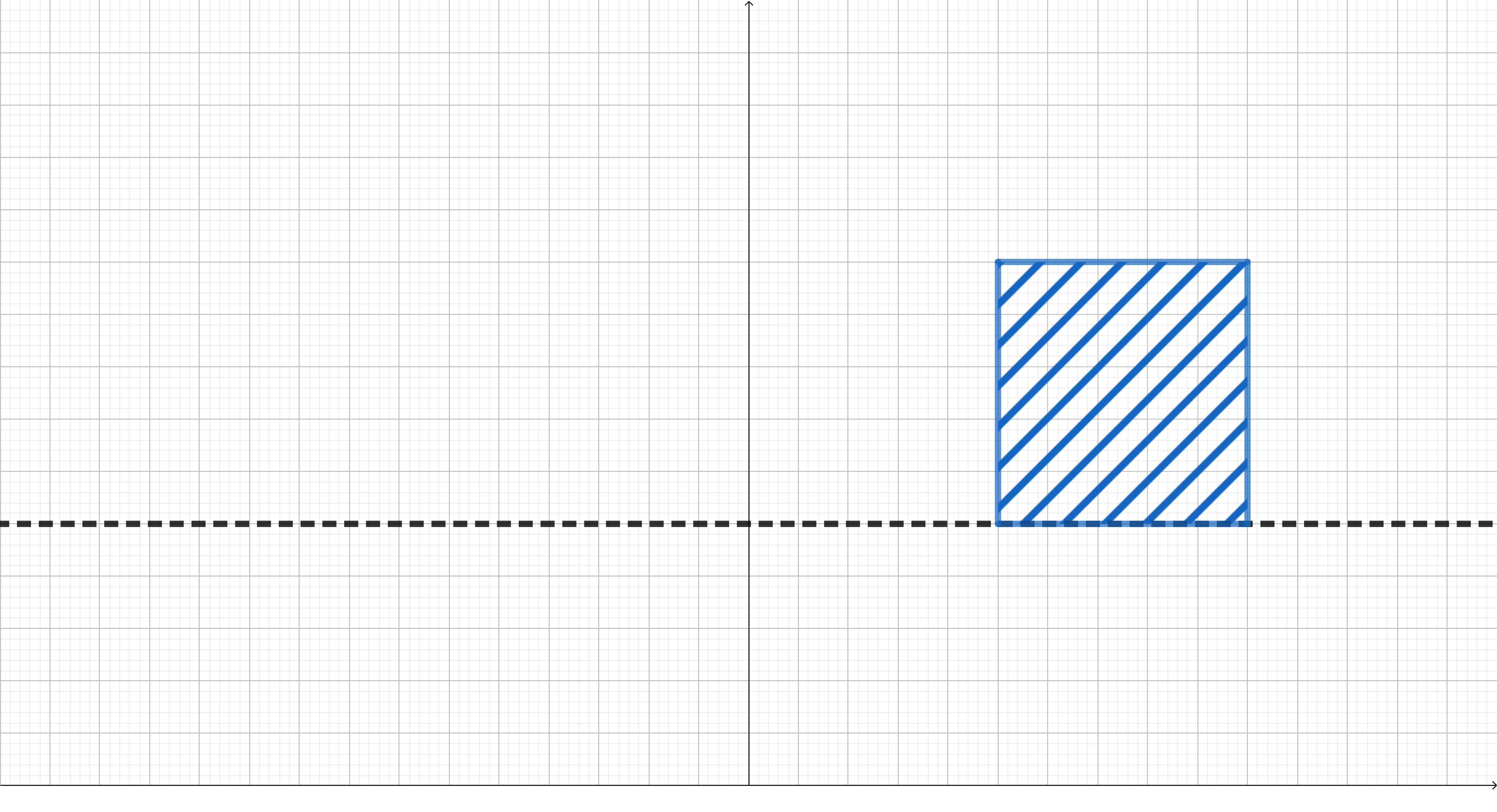}
		\captionof{figure}{A polyhedron.}
		\label{fig:set-p2}
	\end{minipage}%
	$\iff$
	\begin{minipage}{.4\textwidth}
		\centering
		\includegraphics[width=.99\linewidth]{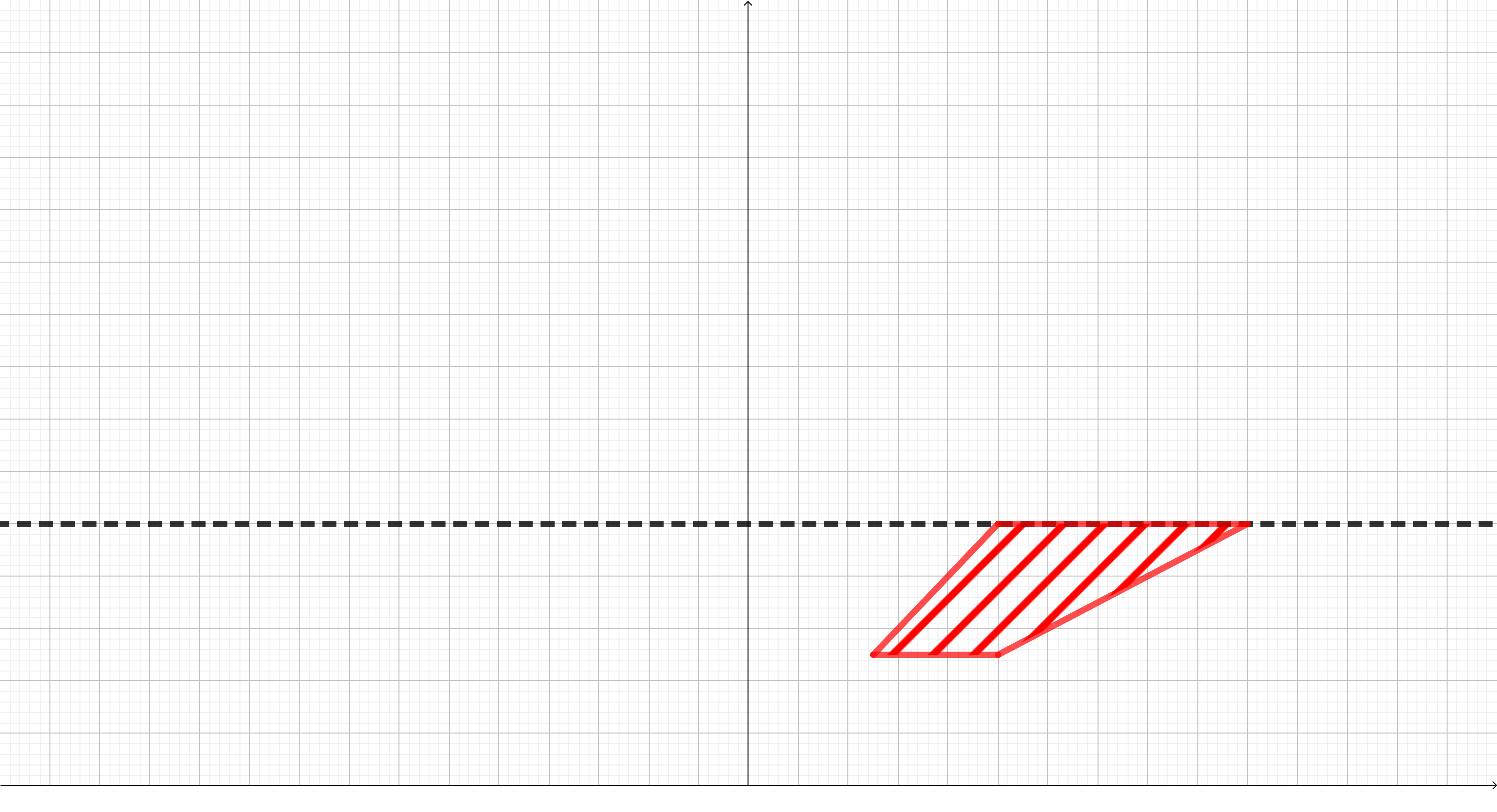}
		\captionof{figure}{Dual polyhedron.}
		\label{fig:set-d2}
	\end{minipage}
	
	\begin{minipage}{.4\textwidth}
		\centering
		\includegraphics[width=.99\linewidth]{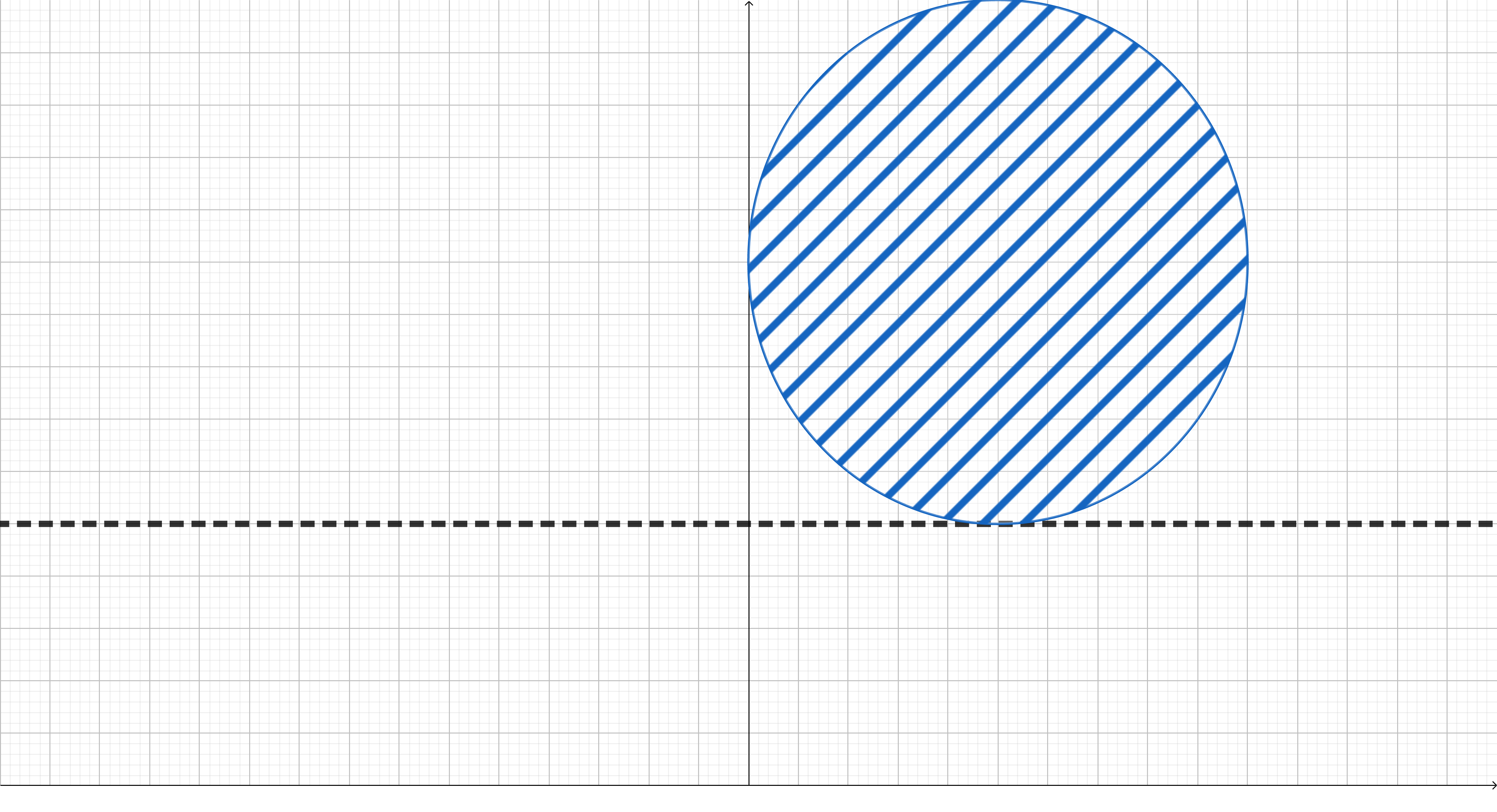}
		\captionof{figure}{An ellipsoid.}
		\label{fig:set-p3}
	\end{minipage}%
	$\iff$
	\begin{minipage}{.4\textwidth}
		\centering
		\includegraphics[width=.99\linewidth]{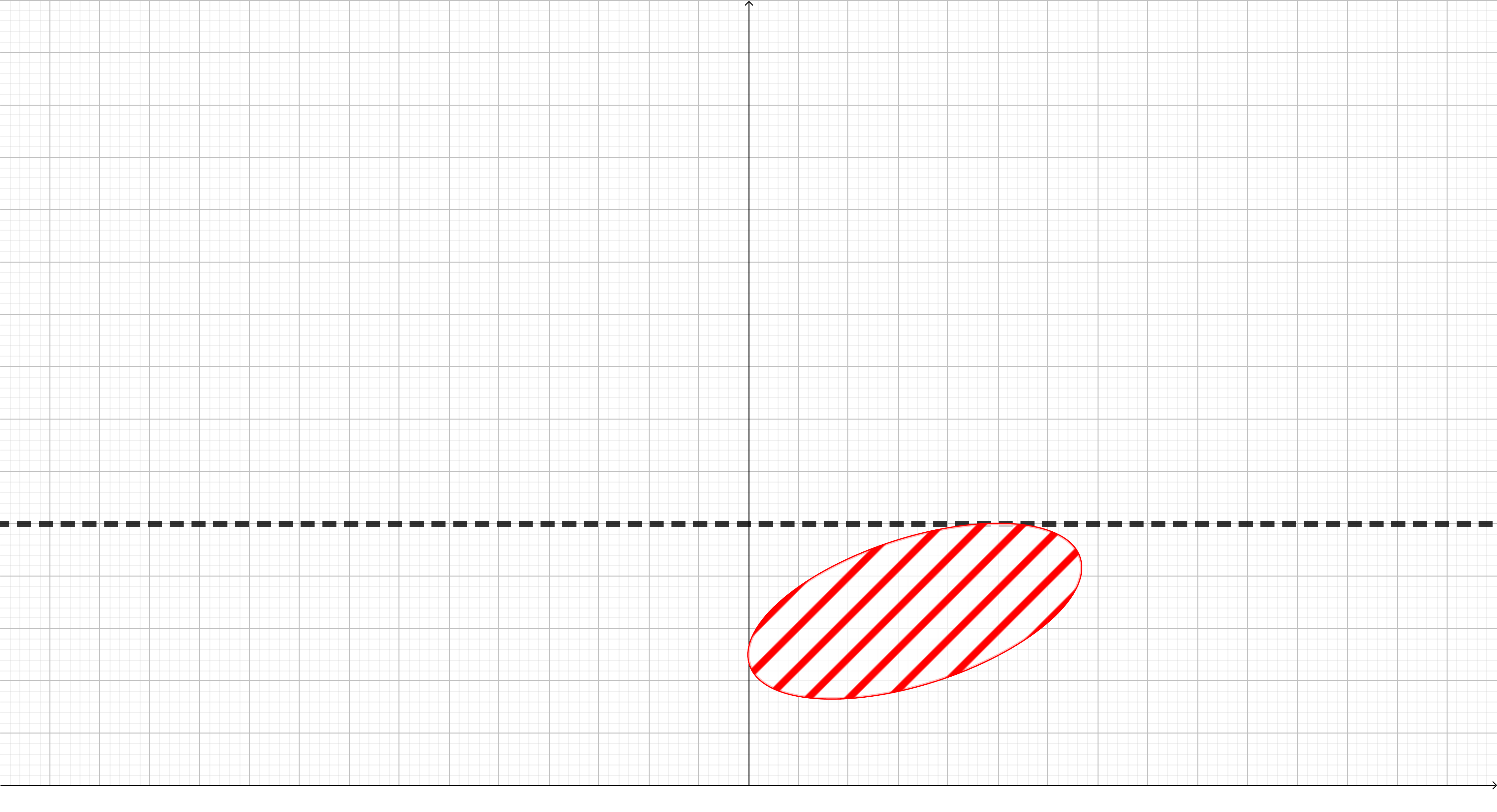}
		\captionof{figure}{Dual ellipsoid.}
		\label{fig:set-d3}
	\end{minipage}
	
	\begin{minipage}{.4\textwidth}
		\centering
		\includegraphics[width=.99\linewidth]{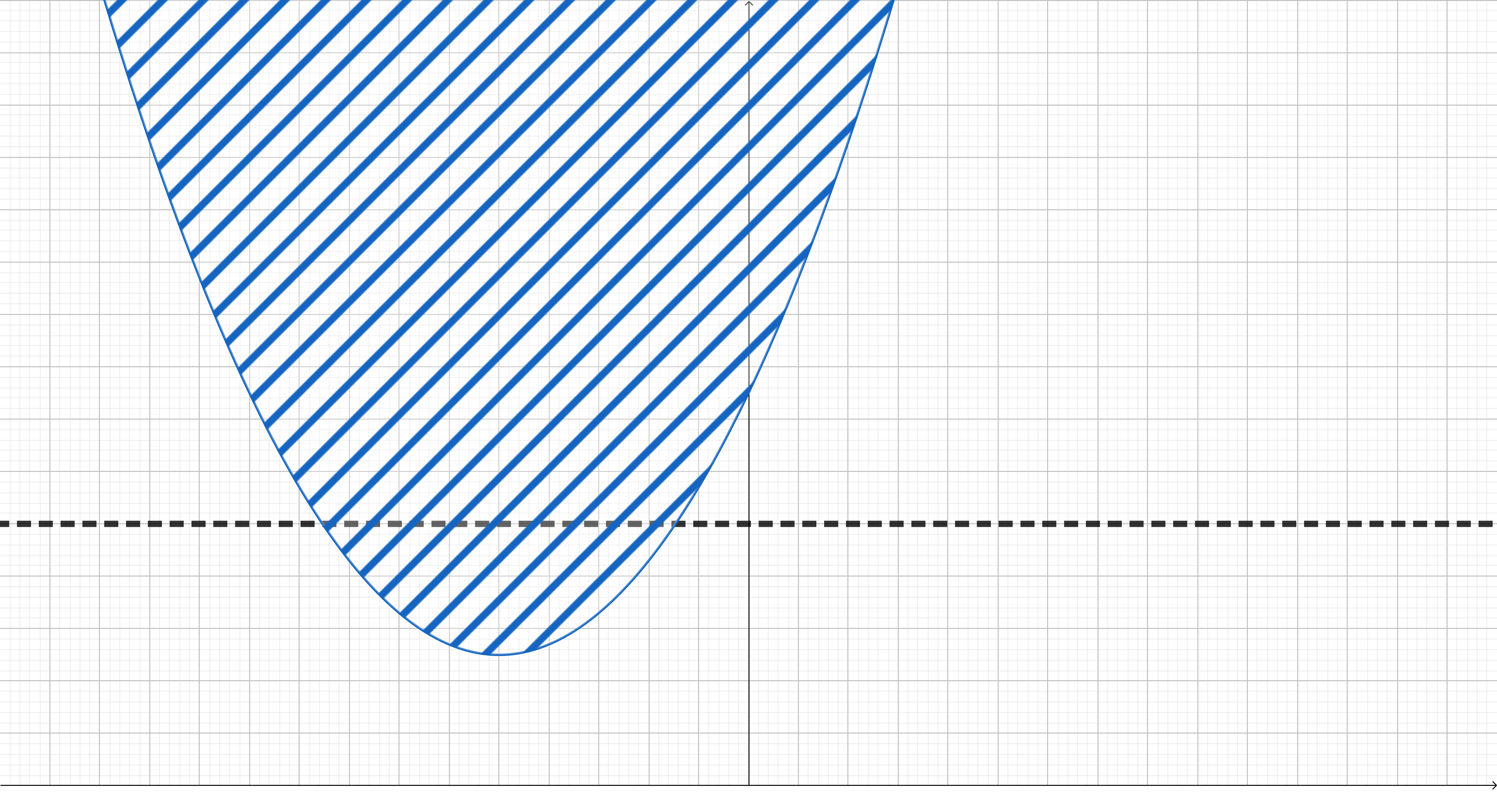}
		\captionof{figure}{A quadratic.}
		\label{fig:set-p4}
	\end{minipage}%
	$\iff$
	\begin{minipage}{.4\textwidth}
		\centering
		\includegraphics[width=.99\linewidth]{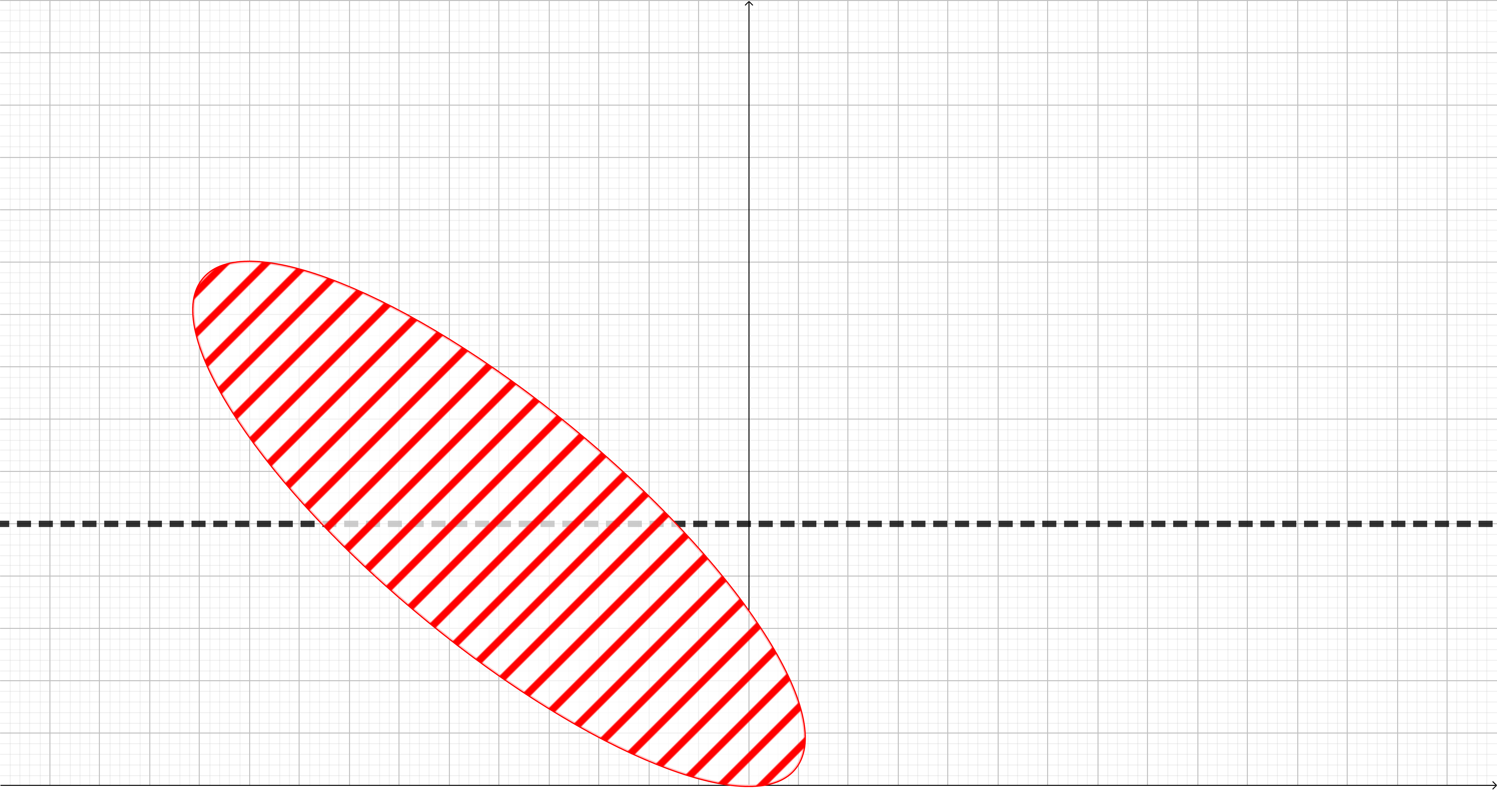}
		\captionof{figure}{Dual of a quadratic.}
		\label{fig:set-d4}
	\end{minipage}
	
	\begin{minipage}{.4\textwidth}
		\centering
		\includegraphics[width=.99\linewidth]{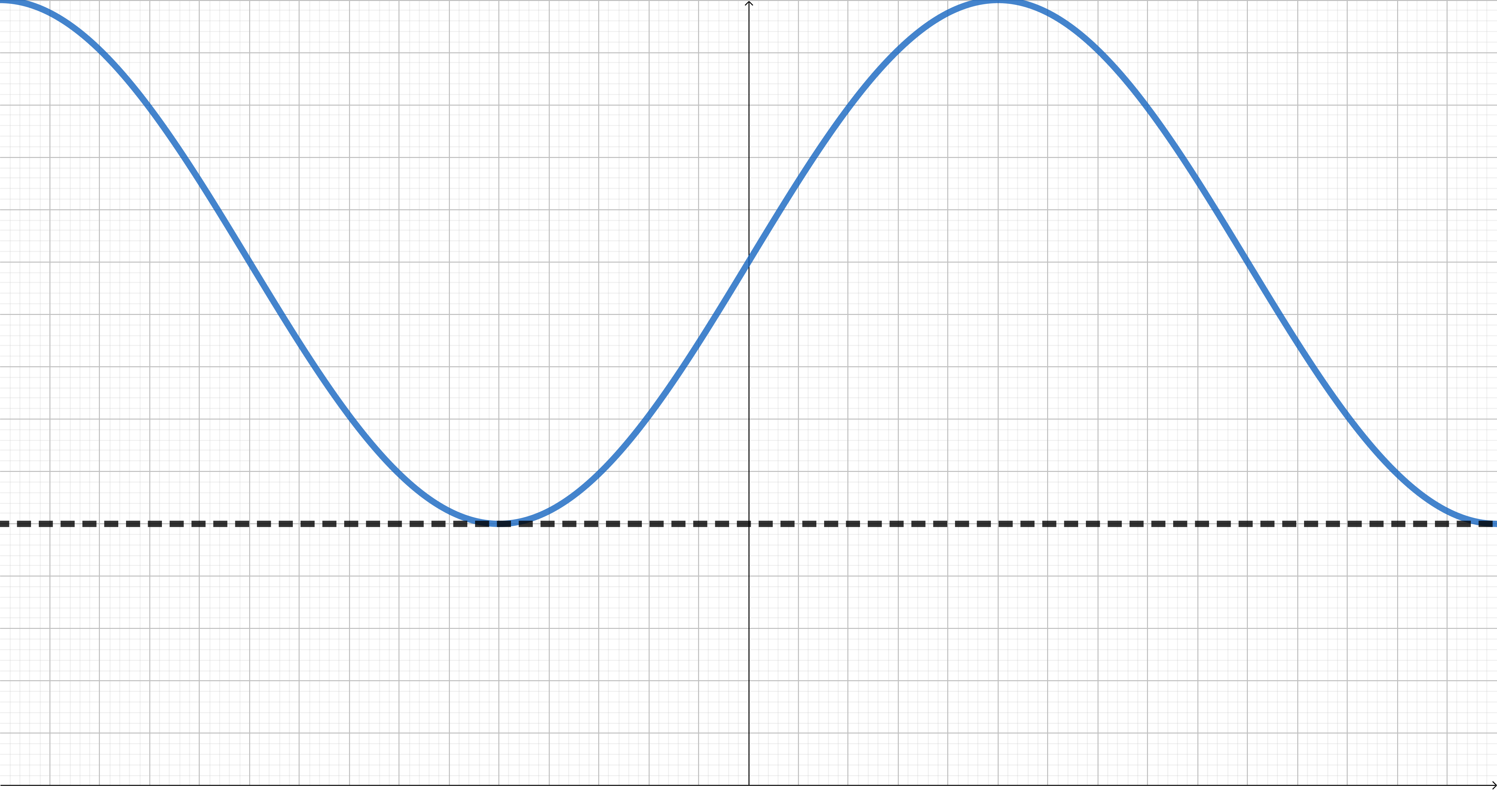}
		\captionof{figure}{A sine wave.}
		\label{fig:set-p5}
	\end{minipage}%
	$\iff$
	\begin{minipage}{.4\textwidth}
		\centering
		\includegraphics[width=.99\linewidth]{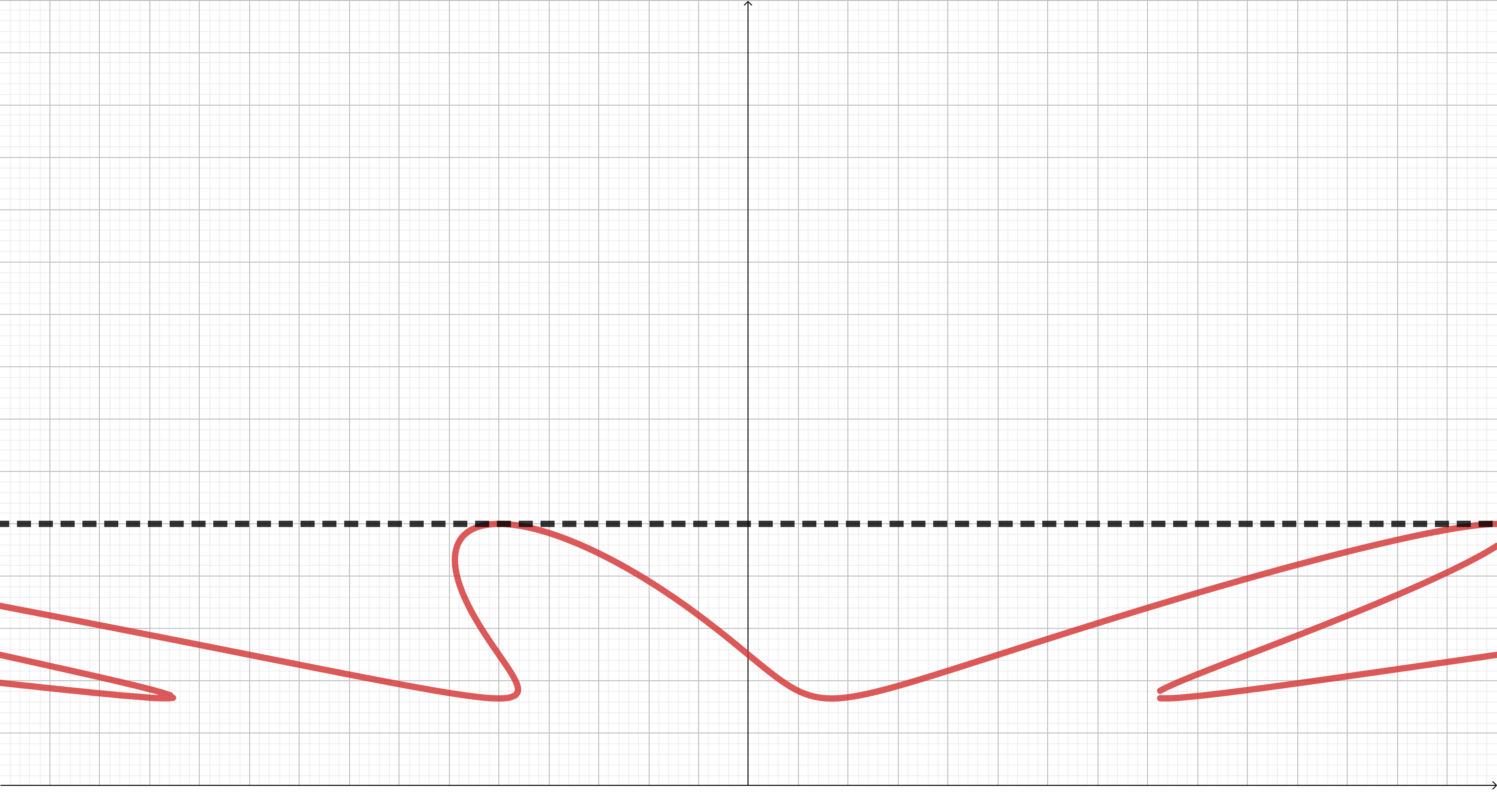}
		\captionof{figure}{Dual of a sine wave.}
		\label{fig:set-d5}
	\end{minipage}
\end{figure}

\section{The Radial Function Transformation} \label{sec:function-transform}
Recall that we defined the {\it upper radial function transformation} of some $f\colon \varSpace\rightarrow\extPos$ based on the radial set transformation as
\begin{align}
\radTransSup{f}(y) &= \sup\{v>0 \mid (y,v)\in\radTransSet{(\epi f)}\} \nonumber\\
&= \sup\{v>0 \mid v\cdot f(y/v) \leq 1\}. \label{eq:def-radTransSup}
\end{align}
This transformation essentially applies $\Gamma$ to the epigraph of $f$ and then interprets $\radTransSet{(\epi f)}$ as the hypograph of a new function.
Alternatively, interpreting $\radTransSet{(\hypo f)}$ as the epigraph of a new function gives the {\it lower radial function transformation} defined by
\begin{align}
\radTransInf{f}(y) &= \inf\{v>0 \mid (y,v)\in\radTransSet{(\hypo f)}\}\nonumber\\
&= \inf\{v>0 \mid v\cdot f(y/v) \geq 1\}. \label{eq:def-radTransInf}
\end{align}

Based on \eqref{eq:def-radTransSup} and \eqref{eq:def-radTransInf}, these transformations can alternatively be defined using the {\it perspective function} of $f$, which we denote by $f^p(y,v) = v\cdot f(y/v)$ for any $v>0$. It is immediate from this viewpoint that 
\begin{equation} \label{eq:strict-increase-condition}
\radTransSup{f}=\radTransInf{f}
\end{equation}
if and only if for every fixed $y\in\varSpace$, $v\mapsto f^p(y,v)$ is nondecreasing in $v$ and is strictly increasing whenever $f^p(y,v)$ is finite.

Having nondecreasing $f^p(y,\cdot)$ can be understood in terms of the intersection of rays with the epigraph or hypograph of $f$. The following lemma shows that if $f^p(y,\cdot)$ is nondecreasing, the ray $\{\lambda(y,1)\mid \lambda>0\}$ has $\lambda(y,1)$ lie in the hypograph for all $\lambda<\lambda_0$ and lie in the epigraph for all $\lambda>\lambda_0$ for some $\lambda_0\in\extPos$. Thus the hypograph of any such function is star-shaped with respect to the origin. 
\begin{lemma}\label{lem:ray-def-of-radial-functions}
	The following three conditions are equivalent:\\
	(i) all $y\in\varSpace$ have $f^p(y,\cdot)$ nondecreasing,\\
	(ii) all $(y,v)\in\epi f$ and $t\leq 1$ have $(y,v)/t\in\epi f$,\\ 
	(iii) all $(y,v)\in\hypo f$ and $t\geq 1$ have $(y,v)/t\in\hypo f$.
\end{lemma}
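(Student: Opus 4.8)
The plan is to prove the lemma by establishing a cycle of implications, say (i) $\Rightarrow$ (ii) $\Rightarrow$ (iii) $\Rightarrow$ (i), translating each condition into the language of the perspective function $f^p(y,v) = v\cdot f(y/v)$ and then using the simple algebraic identity that relates $f^p$ at scaled arguments. The key observation is that for $(y,v)$ and a scalar $t>0$, the point $(y,v)/t = (y/t, v/t)$ satisfies $f^p(y/t, v/t) = (v/t) f\!\left(\frac{y/t}{v/t}\right) = (v/t) f(y/v) = \frac{1}{t} f^p(y,v)$, and more to the point, the membership $(y,v)\in\epi f$ is precisely the statement $f^p(y,v)\le 1$ while $(y,v)\in\hypo f$ is precisely $f^p(y,v)\ge 1$ (keeping in mind the constraint that the second coordinate stays in $\RR_{++}$, which is automatic since $t>0$ and $v>0$).

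First I would show (i) $\Rightarrow$ (ii). Fix $(y,v)\in\epi f$ and $t\le 1$; I may assume $t>0$ so that $(y,v)/t\in\varSpace\times\RR_{++}$. Writing $(y,v)/t = (y/t, v/t)$, I want $f(y/t)\le v/t$. Set $y' = y/t$. Then $f^p(y', v') = v' f(y'/v')$, and choosing $v' = v/t$ gives $y'/v' = y/v$, so $f^p(y', v/t) = (v/t) f(y/v) = (1/t) f^p(y,v)$. Meanwhile $f^p(y', v) = v\cdot f(y/(tv))$; the cleaner route is: since $(y,v)\in\epi f$ means $f(y/v)\cdot v\le v$, i.e.\ $f^p(y,v)\le 1$ after dividing... actually the direct route is to note $f^p(y', v) = f^p(y/t, v)$ and compare the value of $f^p(y', \cdot)$ at the two points $v$ and $v/t$. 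Since $v/t \ge v$ (as $t\le 1$) and $f^p(y',\cdot)$ is nondecreasing by (i), we get $f^p(y', v/t)\ge f^p(y', v)$. Hmm — this inequality points the wrong way, so the correct bookkeeping is instead: from $(y,v)\in\epi f$ we have $f^p(y,v)\le 1$; rescaling, $f^p(y/t, v/t) = (1/t)f^p(y,v) \le 1/t$, which is not obviously $\le 1$. The honest approach is to use that $f^p(y/t,\cdot)$ is nondecreasing and that $v/t$ is obtained by decreasing $t$: parametrize the ray through $(y/t, v/t)$ and observe this is the same ray as through $(y,v)$, so the condition is really about monotonicity along that single ray. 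I will phrase (i) directly as: for each fixed $y$, $s\mapsto f^p(y,s)$ is nondecreasing, which by the scaling identity $f^p(\lambda y, \lambda s) = \lambda f^p(y,s)$ is equivalent to saying that along each ray from the origin, the ``$f$-value coordinate'' crosses the graph exactly once from below. Then (ii) is just the restatement that once you are in the epigraph and you move outward along the ray (which corresponds to $t<1$, increasing the radial parameter), you stay in the epigraph.

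For (ii) $\Leftrightarrow$ (iii): these are genuinely dual under the reciprocal $t\mapsto 1/t$. If $(y,v)\in\hypo f$ and $t\ge 1$, I want $(y,v)/t\in\hypo f$; but $(y,v)/t$ has the same underlying ray as $(y,v)$, and moving to $1/t \le 1$ times... this is precisely the mirror of (ii), obtained by replacing ``$\le u$'' with ``$\ge u$'' and $t$ with $1/t$. I expect the cleanest exposition treats (i) as the hinge: (i) $\Rightarrow$ (ii) and (i) $\Rightarrow$ (iii) each by the scaling identity, and then (ii) $\Rightarrow$ (i) and (iii) $\Rightarrow$ (i) by contrapositive — if $f^p(y,\cdot)$ fails to be nondecreasing at some $y$, there exist $s_1 < s_2$ with $f^p(y,s_1) > f^p(y,s_2)$; scaling both points to have $f^p$-value pinned appropriately produces a point in $\epi f$ whose outward rescaling leaves $\epi f$ (violating (ii)), or symmetrically a point in $\hypo f$ whose inward rescaling leaves $\hypo f$ (violating (iii)).

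The main obstacle is purely bookkeeping: getting the direction of the scaling right (whether $t\le 1$ corresponds to moving toward or away from the origin, and how that interacts with ``nondecreasing''), and correctly handling the boundary/degenerate values where $f$ takes the value $0$ or $\infty$ — e.g.\ if $f(y/v)=\infty$ then $(y,v)\notin\epi f$ for any $v$, and if $f(y/v)=0$ the perspective value is $0$, so these edge cases must be checked to not break the equivalence. I would dispatch these by observing that the relevant memberships are vacuous or trivially preserved in those cases. Once the single scaling identity $f(y/v)\cdot v$ at $(y,v)/t$ equals $(1/t)\cdot f(y/v)\cdot v$ is written down cleanly, all three implications are one-line consequences.
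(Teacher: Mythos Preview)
Your overall plan---show (i) $\Leftrightarrow$ (ii) and (i) $\Leftrightarrow$ (iii) via the perspective function, with the converse handled by contrapositive---is exactly the paper's approach, and your final paragraph describing the contrapositive (pick $s_1<s_2$ with $f^p(y,s_1)>f^p(y,s_2)$ and manufacture a point of $\epi f$ whose rescaling leaves $\epi f$) matches the paper's argument essentially verbatim.

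However, there is a genuine error in your setup that is the source of all the confusion in the middle of the proposal. You assert that ``$(y,v)\in\epi f$ is precisely the statement $f^p(y,v)\le 1$.'' This is false: $(y,v)\in\epi f$ means $f(y)\le v$, i.e.\ $f^p(y,1)\le v$, \emph{not} $f^p(y,v)=v\,f(y/v)\le 1$ (the latter is instead equivalent to $\Gamma(y,v)\in\epi f$). Once you write the correct characterization, the homogeneity identity you found is no longer the relevant tool; instead the forward direction (i) $\Rightarrow$ (ii) is the one-liner the paper gives: from $t\le 1$ and $f^p(y,\cdot)$ nondecreasing,
\[
t\cdot f(y/t)=f^p(y,t)\le f^p(y,1)=f(y)\le v,
\]
hence $f(y/t)\le v/t$ and $(y,v)/t\in\epi f$. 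Your ``inequality points the wrong way'' and ``not obviously $\le 1$'' struggles disappear entirely once the faulty translation is replaced; there is no obstacle beyond that bookkeeping slip.
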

\begin{proof}
	First suppose $f^p(y,\cdot)$ is nondecreasing and consider any $(y,v)\in\epi f$ and $t\leq 1$. Then $t\cdot f(y/t)\leq f(y)\leq v$, and so $(y,v)/t\in\epi f$.
	Conversely, suppose some $t<t'$ has $f^p(y,t) > f^p(y,t')$. Then every $f(y/t')<\alpha<(t/t')\cdot f(y/t)$ must have $(y/t',\alpha)\in\epi f$. However, dividing this point by $t/t'\leq 1$ gives $(y/t,(t'/t)\alpha)\not\in\epi f$. Hence $(i) \iff (ii)$. Symmetric arguments show the equivalent hypograph condition as $(i) \iff (iii)$. \qed
\end{proof}

We say a function $f$ is {\it upper (lower) radial} whenever $f^p(y,\cdot)$ is nondecreasing and upper (lower) semicontinuous for all fixed $y\in\varSpace$.
If in addition $f^p(y,\cdot)$ is strictly increasing on its domain for every fixed $y\in \varSpace$, we say $f$ is {\it strictly upper (lower) radial}. 
The following theorem shows being upper (lower) radial is exactly the condition for the duality of the point and set transformations~\eqref{eq:radial-set-duality} to carry over to the upper (lower) radial function transformation.

\begin{theorem}\label{thm:radial-function-duality}
	A function $f$ is upper radial if and only if $\biradTransSup{f}=f.$\\
	Likewise\footnote{Throughout this manuscript, we claim mirrored results for the lower radial transformation in our theorems and propositions. We omit the proofs for these as they parallel those for the upper radial case.}, a function $f$ is lower radial if and only if $\biradTransInf{f}=f.$
\end{theorem}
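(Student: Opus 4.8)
The plan is to work directly with epigraphs and the radial set transformation, reducing the function-level duality to the set-level duality~\eqref{eq:radial-set-duality}. The key object to track is $\radTransSet{(\epi f)}$: by definition $\radTransSup{f}$ is the function whose hypograph is the ``downward closure'' of $\radTransSet{(\epi f)}$, i.e. $\hypo \radTransSup{f} = \{(y,v)\in\varSpace\times\RR_{++}\mid v\le \sup\{v'>0\mid (y,v')\in\radTransSet{(\epi f)}\}\}$. So the first step is to establish a clean description of $\epi \radTransSup{f}$ in terms of $\radTransSet{(\epi f)}$, and symmetrically $\hypo(\radTransInf{f})$; I expect something like $\epi \radTransSup{f}$ equals the complement-in-$\varSpace\times\RR_{++}$ of the ``strict downward region'' of $\radTransSet{(\epi f)}$, made precise using that $\epi f$ is upward-closed in the last coordinate. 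The point of Lemma~\ref{lem:ray-def-of-radial-functions} is exactly that the nondecreasing condition on $f^p(y,\cdot)$ makes $\epi f$ behave well under scaling, so that $\radTransSet{(\epi f)}$ is itself a ``star-shaped-downward'' region that genuinely is the hypograph of $\radTransSup f$ on its domain.

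Next I would prove the forward direction: assuming $f$ is upper radial, show $\biradTransSup f = f$. Applying the upper transformation twice means reshaping $\epi f$ to $\radTransSet{(\epi f)}$, reading off $\radTransSup f$ as the function pinned to the top of that set, taking $\epi \radTransSup f$, applying $\Gamma$ again, and reading off $\biradTransSup f$. Since $\Gamma$ is an involution on $\varSpace\times\RR_{++}$ by~\eqref{eq:radial-set-duality} and preserves inclusions~\eqref{eq:radial-inclusion} and intersections/unions, the only possible loss is in the two ``read off the function from the set'' steps — that is, in passing from $\radTransSet{(\epi f)}$ to $\hypo \radTransSup f$, and from $\epi \radTransSup f$ back. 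The content is that these passages lose nothing precisely because $f$ is upper radial: the nondecreasing condition guarantees $\radTransSet{(\epi f)}$ is already a hypograph-type region (no ``overhang''), and upper semicontinuity of $f^p(y,\cdot)$ guarantees the supremum in the definition of $\radTransSup f$ is attained, so $\hypo \radTransSup f = \radTransSet{(\epi f)}$ on the relevant domain rather than merely its closure. Combining, $\epi \biradTransSup f = \Gamma(\epi \radTransSup f) = \Gamma(\Gamma(\epi f)) = \epi f$, hence $\biradTransSup f = f$. I would handle the boundary values $0$ and $\infty$ (and points $y\notin\dom f$) separately, checking the sup/inf conventions in the footnotes give the right limiting behavior.

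For the converse, suppose $\biradTransSup f = f$; I want to conclude $f$ is upper radial, i.e. $f^p(y,\cdot)$ is nondecreasing and upper semicontinuous for every $y$. The strategy is contrapositive plus the structural observation that $\radTransSup g$ is \emph{always} upper radial for any $g$ — because $\radTransSet{(\epi g)}$, regardless of $g$, when interpreted as a hypograph produces a function whose perspective is automatically monotone (vertical-line/ray geometry of $\Gamma$) and whose defining supremum makes it upper semicontinuous in the perspective variable. Granting that lemma, $f = \biradTransSup f = \radTransSup{(\radTransSup f)}$ is in the image of $\radTransSup{(\cdot)}$, hence upper radial. So the real work of the converse is isolating and proving this ``image is always upper radial'' fact; the monotonicity part follows from Lemma~\ref{lem:ray-def-of-radial-functions} applied to $\radTransSet{(\epi f)}$ together with the scaling identity $\Gamma((x,u)/t) = t\,\Gamma(x,u)$, and the semicontinuity part from the sup in~\eqref{eq:def-radTransSup}.

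The main obstacle I anticipate is the bookkeeping around when $\radTransSet{(\epi f)}$ is exactly $\hypo \radTransSup f$ versus only its closure or downward closure — i.e. precisely pinning down the roles of the ``nondecreasing'' hypothesis (controls overhang / downward-closedness) versus the ``upper semicontinuous'' hypothesis (controls attainment of the sup, hence closedness of $\hypo\radTransSup f$), and making sure these two conditions together are exactly what is needed, no more and no less, so that both implications are tight. The degenerate values $0$, $\infty$, and empty-domain directions are where sign/convention errors are easy, so I would treat those cases explicitly rather than sweeping them into the generic argument. Everything else is a routine consequence of $\Gamma$ being an inclusion-preserving involution that commutes with intersections and unions.
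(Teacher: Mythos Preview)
Your geometric plan diverges from the paper's short computational proof, and it contains a genuine error at the step where you invoke upper semicontinuity. You claim that ``upper semicontinuity of $f^p(y,\cdot)$ guarantees the supremum in the definition of $\radTransSup f$ is attained, so $\hypo \radTransSup f = \radTransSet{(\epi f)}$.'' This is false. For a nondecreasing one-variable function, upper semicontinuity is right-continuity; attainment of $\sup\{v:f^p(y,v)\le 1\}$ would instead require \emph{left}-continuity at the threshold. Concretely, take $f(x)=2$ on $|x|\le 1$ and $f(x)=0$ otherwise. This $f$ is upper radial, yet $f^p(1,v)$ jumps from $0$ to $2$ at $v=1$, so $\{v:f^p(1,v)\le 1\}=(0,1)$ and the supremum $\radTransSup f(1)=1$ is not attained: $(1,1)\in\hypo\radTransSup f$ but $(1,1)\notin\radTransSet{(\epi f)}$. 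So the identity $\hypo\radTransSup f=\radTransSet{(\epi f)}$ on which your forward direction rests simply does not hold for upper radial $f$. Your displayed chain also mixes epi and hypo: you argue for $\hypo\radTransSup f=\Gamma(\epi f)$ but then use $\epi\radTransSup f=\Gamma(\epi f)$ in ``$\Gamma(\epi\radTransSup f)=\Gamma(\Gamma(\epi f))$,'' which is a different (and also false) identity.

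The paper avoids all of this epi/hypo bookkeeping by a direct calculation at the level of the perspective function. It first checks that $(\radTransSup f)^p(x,u)=\sup\{w>0: w\,f(x/w)\le u\}$ is nondecreasing in $u$, which lets the outer supremum in $\biradTransSup f$ be rewritten as an infimum; unwinding then yields the closed form
\[
\biradTransSup f(x)=\inf_{w>1} w\,f(x/w)=\inf_{w>1} f^p(x,w).
\]
The equivalence is now a one-variable fact: $f^p(x,1)=\inf_{w>1}f^p(x,w)$ for every $x$ holds exactly when each $f^p(x,\cdot)$ is nondecreasing and right-continuous, i.e.\ when $f$ is upper radial. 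Your converse idea (that any $\radTransSup g$ is upper radial, so $f=\biradTransSup f$ forces $f$ upper radial) is morally aligned with this, but note the paper only needs monotonicity of $(\radTransSup f)^p$, not the full upper-radial property, to run the computation; the semicontinuity half emerges from the closed form rather than being proved separately.
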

\begin{proof}
	Observe that $(\radTransSup{f})^p(x,\cdot)$ is nondecreasing since it can be written as
	\begin{align}
	u\cdot\radTransSup{f}(x/u) &= u\cdot \sup\{v>0 \mid v\cdot f(x/vu) \leq 1\}\nonumber\\
	&= \sup\{w>0 \mid w\cdot f(x/w) \leq u\}.\nonumber
	\end{align}
	Then the twice radially transformed function equals the following infimum
	\begin{align}
	\biradTransSup{f}(x) &= \inf\{u>0 \mid u\cdot \radTransSup{f}(x/u) > 1\}\nonumber\\
	&= \inf\{u>0 \mid \sup\{w>0 \mid w\cdot f(x/w) \leq u\} > 1\}\nonumber\\
	&= \inf\{u>0 \mid \exists w>1 \text{\ s.t.\ } w\cdot f(x/w) \leq u\}\nonumber\\
	&= \inf\{w\cdot f(x/w) \mid w> 1\}.\nonumber
	\end{align}
	The claimed duality follows as $f(x)=\inf\{w\cdot f(x/w) \mid w> 1\}$ if and only if $w\mapsto w\cdot f(x/w)$ is nondecreasing and upper semicontinuous on $w>0$ for all $x\in\varSpace$. Verifying this fact is straightforward:
	
	For the forward direction, nondecreasing follows for any $0<w_1<w_2$ as $w_1 f(x/w_1) = w_1 \cdot \inf\{ w\cdot f(x/(w\times w_1)) \mid w>1\}\leq w_2\cdot f(x/w_2)$ (by plugging in $w=w_2/w_1>1$). Then upper semicontinuity at any $w_1>0$ follows immediately as
	$$\limsup_{w_2\nearrow w_1} w_2\cdot f(x/w_2) \leq w_1\cdot f(x/w_1)$$
	and
	$$\limsup_{w_2\searrow w_1} w_2\cdot f(x/w_2) \leq \inf\{ww_1\cdot f(x/ww_1) \mid w> 1\} = w_1\cdot f(x/w_1) $$
	where both inequalities follow from the nondecreasing property.
	
	The reverse direction follows as for any $x$, $\inf\{w\cdot f(x/w) \mid w> 1\}=\lim_{w\searrow 1}w\cdot f(x/w) = 1\cdot f(x/1)$ using nondecreasing for the first equality and upper semicontinuity for the second. \qed
\end{proof}

The radial duality among upper (or lower) radial functions is central to understanding the radial function transformations. In Section~\ref{subsec:characterizing-radial-functions}, we begin by characterizing when important classes of functions are upper or lower radial (i.e., semicontinuous, differentiable, convex, and concave functions). Then Section~\ref{subsec:closure-radial} shows being radial is preserved under many standard operations (i.e., conic combinations, linear compositions, minimums, and maximums).
Sections~\ref{subsec:semicontinuous-radial}, \ref{subsec:piecewise-linear-radial}, \ref{subsec:concave-convex-radial}, and~\ref{subsec:quasiconcave-convex-radial} consider the radial transformations of semicontinuous, piecewise linear, concave/convex, and quasiconcave/quasiconvex functions, respectively.
We conclude this section by giving several examples of radial function transformations in Section~\ref{subsec:radial-function-examples} 

\subsection{Characterizing Radial Functions}\label{subsec:characterizing-radial-functions}

\subsubsection{Radial Semicontinuous Functions}
Here we consider when an upper (or lower) semicontinuous function is upper (or lower) radial, and thus when our duality result holds.
Unlike Lemma~\ref{lem:ray-def-of-radial-functions} which focuses on rays from the origin, we give a necessary and sufficient condition based on proximal normal vectors of the function's hypograph (or epigraph).
We find it suffices to consider whether the origin lies below the tangent hyperplane induced by each proximal normal vector.
\begin{proposition}\label{prop:semicontinuity-radial-NS}
	An upper semicontinuous $f$ is upper radial if and only if all $(x,u)\in \hypo f$ and $(\zeta,\delta)\in N^P_{\hypo f}((x,u))$ satisfy
	$ (\zeta,\delta)^T(x,u)\geq 0.$\\
	Likewise, a lower semicontinuous $f$ is lower radial if and only if all $(x,u)\in \epi f$ and $(\zeta,\delta)\in N^P_{\epi f}((x,u))$ satisfy
	$ (\zeta,\delta)^T(x,u)\leq 0.$
\end{proposition}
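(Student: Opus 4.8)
The statement I want to prove is Proposition~\ref{prop:semicontinuity-radial-NS}: for upper semicontinuous $f$, being upper radial is equivalent to the condition that every proximal normal $(\zeta,\delta) \in N^P_{\hypo f}((x,u))$ satisfies $(\zeta,\delta)^T(x,u) \ge 0$. Since $f$ is already assumed upper semicontinuous (so $\hypo f$ is closed and $f^p(y,\cdot)$ is automatically upper semicontinuous in the relevant sense), the content is to match the nondecreasing condition on $f^p(y,\cdot)$ — equivalently, by Lemma~\ref{lem:ray-def-of-radial-functions}(iii), the star-shapedness condition ``$(y,v) \in \hypo f$ and $t \ge 1 \implies (y,v)/t \in \hypo f$'' — with the proximal normal inequality. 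So the real task is: \emph{a closed set $C \subseteq \varSpace \times \RR_{++}$ satisfies ``$z \in C, t \ge 1 \implies z/t \in C$'' if and only if every proximal normal $(\zeta,\delta)$ to $C$ at any $z = (x,u)$ has $(\zeta,\delta)^T z \ge 0$,} and then note $\hypo f$ has this structure exactly when $f$ is upper radial, using upper semicontinuity to handle the closure/limit subtleties. The reverse direction for the lower-radial/epigraph case is symmetric and I would state it without proof as the paper does elsewhere.

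**The two directions.** For the forward direction (upper radial $\implies$ normal condition), suppose $f$ is upper radial and fix $(x,u) \in \hypo f$ with $(\zeta,\delta) \in N^P_{\hypo f}((x,u))$. By definition there is $\epsilon > 0$ so that $(x,u)$ is the nearest point of $\hypo f$ to $(x,u) + \epsilon(\zeta,\delta)$; equivalently the open ball $B((x,u)+\epsilon(\zeta,\delta),\ \epsilon\|(\zeta,\delta)\|)$ misses $\hypo f$. The star-shapedness from Lemma~\ref{lem:ray-def-of-radial-functions}(iii) says the whole segment $\{(x,u)/t \mid t \ge 1\}$ — i.e. the segment from the origin to $(x,u)$ — lies in $\hypo f$; actually I only need that points $(x,u)/t$ with $t$ slightly larger than $1$ lie in $\hypo f$, i.e. the inward radial direction $-(x,u)$ points into $\hypo f$ from $(x,u)$. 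A standard proximal-normal fact (the ball avoids $\hypo f$, so no feasible direction from $(x,u)$ makes an acute angle with $(\zeta,\delta)$) then forces $(\zeta,\delta)^T(-(x,u)) \le 0$, i.e. $(\zeta,\delta)^T(x,u) \ge 0$. Here I should be slightly careful that $(x,u)/t$ stays in $\varSpace \times \RR_{++}$ — it does, since $u/t > 0$ — and that the segment is genuinely in $\hypo f$ and not just its closure, which is where upper semicontinuity (closedness of $\hypo f$) is used.

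**The converse and the main obstacle.** For the converse (normal condition $\implies$ upper radial), I would argue by contradiction: if $f$ is not upper radial, by Lemma~\ref{lem:ray-def-of-radial-functions} there is $z_0 = (y,v) \in \hypo f$ and $t > 1$ with $z_0/t \notin \hypo f$. Since $\hypo f$ is closed and $z_0/t \notin \hypo f$ while $z_0 \in \hypo f$, walking along the segment from $z_0/t$ toward $z_0$ there is a first point $z^\star = z_0/t^\star$ (some $1 < t^\star \le t$) that lies in $\hypo f$, with a whole open sub-segment just ``below'' it (toward the origin) outside $\hypo f$. I want to produce at $z^\star$ a proximal normal $(\zeta,\delta)$ with $(\zeta,\delta)^T z^\star < 0$ — intuitively, the inward radial direction $-z^\star$ is a feasible direction (points into $\hypo f$), so the ``outward'' radial direction $+z^\star$ should support a small ball disjoint from $\hypo f$. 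The cleanest way to realize this is to fit a small ball tangent to $z^\star$ lying in the complement of $\hypo f$ along the radial direction: because a small neighborhood of the sub-segment near $z^\star$ on the origin-side is in the open complement, one can inscribe a ball there touching $z^\star$, centered at $z^\star - \epsilon z^\star / \|z^\star\|$ for small $\epsilon$ (shrinking so it avoids $\hypo f$), which exhibits $-z^\star / \|z^\star\| \in N^P_{\hypo f}(z^\star)$, contradicting $(\zeta,\delta)^T z^\star \ge 0$. The main obstacle is this ball-inscription step: I must verify the ball genuinely avoids all of $\hypo f$ (not merely the segment), which requires using closedness of $\hypo f$ to get a positive distance from $z^\star$-adjacent exterior points to $\hypo f$, and possibly invoking Proposition~\ref{prop:ellipsoid-radial} or just a direct distance estimate; handling the case where $z^\star$ lies near the boundary of $\varSpace \times \RR_{++}$ (small last coordinate) and ensuring the ball stays inside $\varSpace \times \RR_{++}$ is the fiddly part. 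Everything else is bookkeeping with Lemma~\ref{lem:ray-def-of-radial-functions} and the definition of $N^P$.
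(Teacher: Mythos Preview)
Your forward direction is fine and matches the paper's argument: the ray $\{(x,u)/t : t\ge 1\}$ lies in $\hypo f$, so $-(x,u)$ is a feasible direction and any proximal normal must make a nonpositive inner product with it.

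The converse, however, has a genuine gap. Your plan is to locate the ``first'' point $z^\star=(x^\star,u^\star)$ on the ray that lies in $\hypo f$ and then exhibit $-z^\star/\|z^\star\|\in N^P_{\hypo f}(z^\star)$ by inscribing a small ball centered at $c=z^\star-\epsilon z^\star/\|z^\star\|$. But this can never succeed for a hypograph: since $(x^\star,u^\star)\in\hypo f$, every point $(x^\star,u^\star-\delta)$ with small $\delta>0$ also lies in $\hypo f$, and a direct computation
\[
\|(x^\star,u^\star-\delta)-c\|^2=\epsilon^2-\frac{2\epsilon\,u^\star}{\|z^\star\|}\delta+\delta^2<\epsilon^2
\]
shows that this point is strictly inside your ball. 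Hence the ball always meets $\hypo f$ away from $z^\star$, and $-z^\star$ is \emph{never} a proximal normal at $z^\star$. The obstacle you flagged (``verify the ball genuinely avoids all of $\hypo f$'') is not a technicality but a structural impossibility for hypographs; no amount of shrinking $\epsilon$ fixes it.

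The paper sidesteps this by not insisting that the bad proximal normal occur at a point on the ray. Instead it slides a ball of \emph{fixed} radius $\epsilon/2$ along the ray, starting from a position $B((x,u)/\gamma,\epsilon)$ entirely outside $\hypo f$, and records the last parameter $\gamma'$ at which the ball still touches $\hypo f$. At that position the open ball is disjoint from $\hypo f$ while the closed ball touches it at some boundary point $(x',u')$, which need not lie on the ray at all; automatically $(\zeta',\delta'):=(x,u)/\gamma'-(x',u')\in N^P_{\hypo f}((x',u'))$. A short estimate using that $(x',u')\notin B((x,u)/t,\epsilon/2)$ for $t$ slightly larger than $\gamma'$ then forces $(\zeta',\delta')^T(x',u')\le -\epsilon^2/2<0$. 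The key idea you are missing is this decoupling of the contact point from the ray.
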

\begin{proof}
	First suppose $f$ is upper radial and consider any $(x,u)\in \hypo f$ and $(\zeta,\delta)\in N^P_{\hypo f}((x,u))$. By Lemma~\ref{lem:ray-def-of-radial-functions}, $(x,u)/t \in\hypo f$ for all $t\geq 1$. Since $(x,u)\in \proj_{\hypo f}((x,u)+\epsilon(\zeta,\delta))$ for some $\epsilon>0$, all $t\geq 1$ satisfy
	$$\|(x,u)+\epsilon(\zeta,\delta) - (x,u)\|^2 \leq \|(x,u)+\epsilon(\zeta,\delta) - (x,u)/t\|^2.$$
	Simplifying this gives $$0 \leq (1-1/t)^2\|(x,u)\|^2 + 2\epsilon(1-1/t)(\zeta,\delta)^T(x,u),$$ and so taking $t\rightarrow 1$ verifies $(\zeta,\delta)^T(x,u)\geq 0$.
	
	Note that $f^p(y,\cdot)$ is upper semicontinuous by assumption.
	Now suppose $f^p(y,\cdot)$ is not nondecreasing. Then Lemma~\ref{lem:ray-def-of-radial-functions} guarantees some $(x,u)\in\hypo f$ and $\gamma>1$ has $(x,u)/\gamma \not\in \hypo f$. The assumed upper semicontinuity guarantees $\hypo f$ is closed, and thus for some $\epsilon>0$,
	$B((x,u)/\gamma, \epsilon)\cap \hypo f =\emptyset.$
	Hence the following supremum is well defined
	$$ \gamma' := \sup\{1<t\leq \gamma \mid B((x,u)/t, \epsilon/2)\cap \hypo f \neq\emptyset\}.$$
	Notice that $1<\gamma'<\gamma$. Further, $\interior B((x,u)/\gamma', \epsilon/2)\cap \hypo f=\emptyset$. Moreover, since $\hypo f$ is closed, some $(x',u') \in \hypo f$ lies on the boundary of this ball -- that is, $\|(x,u)/\gamma' - (x',u')\| = \epsilon/2$. Then $\hypo f$ at $(x',u')$ has the following proximal normal vector
	$$ (\zeta',\delta') := (x,u)/\gamma' - (x',u') \in N^P_{\hypo f}((x',u')).$$
	Since all $\gamma' < t < \gamma$ have $(x',u') \not\in  B((x,u)/t, \epsilon/2)$,
	\begin{align*}
	\left(\frac{\epsilon}{2}\right)^2 & \leq \left\|\frac{(x,u)}{t} - (x',u')\right\|^2\\
	& = \left\|\frac{\gamma'}{t}(\zeta',\delta') - \left(1 - \frac{\gamma'}{t}\right)(x',u')\right\|^2\\
	& = \left\|\frac{\gamma'}{t}(\zeta',\delta')\right\|^2 - 2\frac{\gamma'}{t}\left(1 - \frac{\gamma'}{t}\right)(\zeta',\delta')^T(x',u') + \left\|\left(1 - \frac{\gamma'}{t}\right)(x',u')\right\|^2\\
	& = \left(\frac{\gamma'}{t}\frac{\epsilon}{2}\right)^2 - 2\frac{\gamma'}{t}\left(1 - \frac{\gamma'}{t}\right)(\zeta',\delta')^T(x',u') + \left\|\left(1 - \frac{\gamma'}{t}\right)(x',u')\right\|^2.
	\end{align*}
	Rearrangement of this inequality gives
	$$ 2(\zeta',\delta')^T(x',u') \leq -\left(\frac{t}{\gamma'}+1\right)\left(\frac{\epsilon}{2}\right)^2 + \left(\frac{t}{\gamma'} - 1\right)\left\|(x',u')\right\|^2.$$
	Taking $t\rightarrow \gamma'$ shows $(\zeta',\delta')^T(x',u')\leq -\epsilon^2/2 <0$. \qed
\end{proof}


\subsubsection{Radial Differentiable Functions}
Here we specialize the previous result for semicontinuous functions to differentiable functions. We want to allow functions like the previously considered example $f(x)=\sqrt{1-\|x\|^2}$ (with value $0$ whenever $\|x\|\geq 1$) in our theory here. To this end, we say a function is {\it continuously differentiable} if $f$ is continuous on $\varSpace$ and $\nabla f(x)$ exists and is continuous on its effective domain $\dom f = \{x\in\varSpace \mid f(x)\in\RR_{++}\}$.

For any such function and $x\in\dom f$, if some nonzero $(\zeta,\delta)\in N^P_{\epi f}((x,u))$ exists, then $u=f(x)$ and $(\zeta,\delta)=\lambda(\nabla f(x),-1)$ for some $\lambda\geq0$. Further, for a dense subset of $\dom f$, the converse holds (which follows from the density theorem of proximal calculus~\cite[Theorem 1.3.1]{Clarke1998-nonsmoothanalysis}).
Then the continuity of $\nabla f$ and Proposition~\ref{prop:semicontinuity-radial-NS} imply the following condition is necessary and sufficient for $f$ to be radial\footnote{A continuous functions is (strictly) upper radial if and only if it is (strictly) lower radial. In such cases, we simply say the function is (strictly) radial as a shorthand.}.
\begin{proposition} \label{prop:differentiable-radial-NS}
	A continuously differentiable $f$ is radial if and only if for all $x\in\dom f$,
	$ (\nabla f(x),-1)^T(x,f(x)) \leq 0.$
\end{proposition}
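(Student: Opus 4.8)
The statement specializes Proposition~\ref{prop:semicontinuity-radial-NS} to the continuously differentiable case, so I would prove it by connecting the proximal normal condition on $\hypo f$ (or $\epi f$) to the single pointwise inequality $(\nabla f(x),-1)^T(x,f(x))\leq 0$. Since for a continuous function the upper and lower radial notions coincide (this is the footnote's shorthand), I may work with whichever of $\epi f$ or $\hypo f$ is more convenient; I would use $\epi f$ and the lower radial characterization, which asks that every $(\zeta,\delta)\in N^P_{\epi f}((x,u))$ satisfy $(\zeta,\delta)^T(x,u)\leq 0$.

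\textbf{Step 1 (normal vectors of a $C^1$ epigraph).} First I would record the structure of $N^P_{\epi f}$ for continuously differentiable $f$: for $x\in\dom f$, any nonzero proximal normal at a point of $\graph f$ is of the form $(\zeta,\delta)=\lambda(\nabla f(x),-1)$ with $\lambda\geq 0$, and the point must be $(x,f(x))$ (a ball tangent to a $C^1$ graph from above can only touch along the graph and with the downward graph normal). This is exactly the fact quoted in the paragraph preceding the proposition. I also need that on a dense subset of $\dom f$ the converse holds — i.e., $(\nabla f(x),-1)$ genuinely is a proximal normal — which is the cited density theorem of proximal calculus~\cite[Theorem 1.3.1]{Clarke1998-nonsmoothanalysis}. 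Outside $\dom f$ (where $f=0$ and $\epi f$ locally contains a full vertical half-line) the only proximal normals are horizontal, $(\zeta,0)$, and there the inequality $(\zeta,0)^T(x,0)\leq 0$ is a statement purely about the $0$-level set, which I would either fold into the $\dom f$ analysis by a limiting argument or check is implied; I expect this boundary bookkeeping to be the only delicate point.

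\textbf{Step 2 (translate the two conditions into each other).} Given Step~1, Proposition~\ref{prop:semicontinuity-radial-NS} says $f$ is radial iff for all $x$ and all proximal normals $(\zeta,\delta)$ at $(x,f(x))$, $(\zeta,\delta)^T(x,f(x))\leq 0$. For $x\in\dom f$ the normals are exactly the nonnegative multiples $\lambda(\nabla f(x),-1)$, so this is equivalent to $\lambda\,(\nabla f(x),-1)^T(x,f(x))\leq 0$ for all $\lambda\geq 0$, i.e., $(\nabla f(x),-1)^T(x,f(x))\leq 0$ — the claimed inequality. For the forward direction I use that such a normal exists (density theorem) on a dense set, then extend to all of $\dom f$ by continuity of $\nabla f$ and $f$: the function $x\mapsto (\nabla f(x),-1)^T(x,f(x)) = \nabla f(x)^T x - f(x)$ is continuous on $\dom f$, nonpositive on a dense subset, hence nonpositive throughout. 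For the reverse direction, if the pointwise inequality holds everywhere on $\dom f$, then every proximal normal (being of the stated form) satisfies the required inner-product inequality, so Proposition~\ref{prop:semicontinuity-radial-NS} applies.

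\textbf{Main obstacle.} The real work is the interface between $\dom f$ and its complement/boundary: making sure that the "only proximal normals to a $C^1$ epigraph are graph normals" claim is used correctly at points where $f$ hits $0$ and $\epi f$ picks up extra flat (horizontal) normals, and confirming those contribute no new constraint beyond what continuity already forces. Everything else — the density-theorem invocation, the $\lambda\geq 0$ scaling argument, and the continuity/closure argument extending the inequality from a dense set to all of $\dom f$ — is routine once Step~1 is cleanly stated.
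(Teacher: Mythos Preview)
Your proposal is correct and follows essentially the same route as the paper: the paper's argument is the short paragraph immediately preceding the proposition, which (i) notes that nonzero proximal normals to $\epi f$ at $(x,u)$ with $x\in\dom f$ force $u=f(x)$ and $(\zeta,\delta)=\lambda(\nabla f(x),-1)$, (ii) invokes the density theorem to get actual proximal normals on a dense set, and (iii) concludes via continuity of $\nabla f$ and Proposition~\ref{prop:semicontinuity-radial-NS}. Your Steps~1--2 reproduce this exactly. The boundary worry you flag is in fact a non-issue: since $f$ is assumed continuous on $\varSpace$, any $x_0$ with $f(x_0)=0$ has $f(x')<u_0$ for all $x'$ near $x_0$ and any fixed $u_0>0$, so $(x_0,u_0)$ lies in the interior of $\epi f\subseteq\varSpace\times\RR_{++}$ and contributes no nonzero proximal normals; points with $f(x_0)=+\infty$ are not in $\epi f$ at all.
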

This characterization can be alternatively derived by considering when the partial derivative $\frac{\partial}{\partial v}f^p(y,v) = f(y/v)-\nabla f(y/v)^T(y/v)$ is nonnegative. Based on this observation, having a positive derivative is sufficient to ensure $f^p(y,\cdot)$ is strictly increasing on its domain, and thus $\radTransSup{f}=\radTransInf{f}$ by~\eqref{eq:strict-increase-condition}.
\begin{proposition} \label{prop:differentiable-strict-increase-S}
	A continuously differentiable $f$ is strictly radial if for all $x\in\dom f$,
	$ (\nabla f(x),-1)^T(x,f(x)) < 0.$
\end{proposition}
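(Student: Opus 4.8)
The plan is to reduce Proposition~\ref{prop:differentiable-strict-increase-S} to the already-established characterizations via the perspective-function derivative. By~\eqref{eq:strict-increase-condition}, it suffices to show that the hypothesis $(\nabla f(x),-1)^T(x,f(x)) < 0$ for all $x\in\dom f$ implies both that $f$ is radial (so that $f^p(y,\cdot)$ is nondecreasing and semicontinuous for every $y$) and that $f^p(y,\cdot)$ is strictly increasing on its domain. The first part is immediate: the strict inequality implies the non-strict inequality $(\nabla f(x),-1)^T(x,f(x))\leq 0$, so Proposition~\ref{prop:differentiable-radial-NS} gives that $f$ is radial. So the real content is upgrading ``nondecreasing'' to ``strictly increasing on the domain.''

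For the strictness, I would follow the observation already recorded in the paragraph after Proposition~\ref{prop:differentiable-radial-NS}: for $y/v\in\dom f$ one computes
\[
\frac{\partial}{\partial v} f^p(y,v) = \frac{\partial}{\partial v}\bigl(v\, f(y/v)\bigr) = f(y/v) - \nabla f(y/v)^T (y/v) = -\,(\nabla f(y/v),-1)^T\bigl(y/v,\,f(y/v)\bigr).
\]
Applying the hypothesis with $x=y/v$ shows this partial derivative is \emph{strictly positive} at every $v$ with $y/v\in\dom f$. Since $f$ is continuously differentiable, $f^p(y,\cdot)$ is differentiable on the open set $\{v>0 : y/v\in\dom f\}$, which is exactly the interior of the domain of $f^p(y,\cdot)$; a function with strictly positive derivative on an interval is strictly increasing there. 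Hence $f^p(y,\cdot)$ is strictly increasing on its domain (one only needs to note the domain of $f^p(y,\cdot)$ in $v$ is an interval, which follows from Lemma~\ref{lem:ray-def-of-radial-functions}(ii) together with the already-established radiality, since the set of $v$ with $y/v\in\dom f$ is then an interval). Combining this with radiality and invoking~\eqref{eq:strict-increase-condition} gives $\radTransSup{f}=\radTransInf{f}$, so $f$ is strictly radial.

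The main obstacle is not any deep computation but rather handling the boundary of $\dom f$ carefully: $f$ is only assumed continuously differentiable on $\dom f$, not on all of $\varSpace$, so $f^p(y,\cdot)$ need not be differentiable at the endpoints of its domain interval, and I must make sure the definition of ``strictly increasing on its domain'' is the one that only compares points where $f^p(y,\cdot)$ is finite (i.e. $v$ with $y/v\in\dom f$). I should also double-check the edge case $y=0$, where $f^p(0,v)=v f(0)$ is strictly increasing precisely when $f(0)>0$, which is consistent with the hypothesis evaluated at $x=0$ giving $-f(0)<0$. With those bookkeeping points addressed, the argument is just: strict inequality $\Rightarrow$ positive $\partial_v f^p \Rightarrow$ strict monotonicity $\Rightarrow$ (with Proposition~\ref{prop:differentiable-radial-NS} and~\eqref{eq:strict-increase-condition}) strictly radial.
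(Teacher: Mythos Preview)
Your proposal is correct and follows essentially the same approach as the paper: compute the partial derivative $\frac{\partial}{\partial v}f^p(y,v)=f(y/v)-\nabla f(y/v)^T(y/v)$, observe that the hypothesis makes it strictly positive, conclude $f^p(y,\cdot)$ is strictly increasing on its domain, and invoke~\eqref{eq:strict-increase-condition}. The paper's justification is the one-sentence remark immediately preceding the proposition; your version simply fills in the bookkeeping (radiality via Proposition~\ref{prop:differentiable-radial-NS}, the domain being an interval, the $y=0$ case) that the paper leaves implicit.
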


\subsubsection{Radial Convex and Concave Functions}

Lastly we consider conditions for convex or concave functions to be upper or lower radial. For convex functions, the proximal subdifferential and convex subdifferential are equal giving the following characterization.
\begin{proposition}\label{prop:convex-radial-NS}
	A lower semicontinuous convex $f$ is lower radial if and only if all $(x,u)\in\epi f$ and $(\zeta,\delta)\in N^C_{\epi f}((x,u))$ have
	$(\zeta,\delta)^T(x,u)\leq 0.$
\end{proposition}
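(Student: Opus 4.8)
The plan is to reduce Proposition~\ref{prop:convex-radial-NS} to the already-established Proposition~\ref{prop:semicontinuity-radial-NS}, which characterizes when a lower semicontinuous function is lower radial in terms of its \emph{proximal} normal vectors, namely that all $(x,u)\in\epi f$ and $(\zeta,\delta)\in N^P_{\epi f}((x,u))$ satisfy $(\zeta,\delta)^T(x,u)\leq 0$. Since a convex set's proximal and convex normal cones coincide, the bulk of the work is to justify replacing $N^P_{\epi f}$ by $N^C_{\epi f}$ and to check both directions cleanly.

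First I would recall (or prove in one line) the standard fact that for a convex set $C\subseteq\varSpace\times\RR$ and any $(x,u)\in C$ one has $N^P_C((x,u)) = N^C_C((x,u))$: the inclusion $N^P\subseteq N^C$ is immediate from the definitions, and for convex $C$ the projection $\proj_C$ is single-valued and firmly nonexpansive, so any $(\zeta,\delta)\in N^C_C((x,u))$ has $(x,u)=\proj_C((x,u)+\epsilon(\zeta,\delta))$ for every $\epsilon>0$, hence lies in $N^P_C((x,u))$. Applying this with $C=\epi f$ (which is convex by the definition of $f$ being convex, and closed by lower semicontinuity) gives $N^P_{\epi f}((x,u)) = N^C_{\epi f}((x,u))$ at every point of $\epi f$.

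With that identification in hand, the forward direction is direct: if $f$ is lower radial, then by Proposition~\ref{prop:semicontinuity-radial-NS} every proximal normal $(\zeta,\delta)\in N^P_{\epi f}((x,u))$ satisfies $(\zeta,\delta)^T(x,u)\leq0$, and since these are exactly the convex normals, the stated condition holds. The converse is symmetric: if every convex normal satisfies the inequality, then since $N^P_{\epi f}\subseteq N^C_{\epi f}$ (no convexity needed for this inclusion), every proximal normal does too, so Proposition~\ref{prop:semicontinuity-radial-NS} yields that $f$ is lower radial. (The mirrored upper-radial statement for lower semicontinuous concave $f$ follows by the same argument applied to $\hypo f$, per the manuscript's convention.)

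I do not anticipate a genuine obstacle here — the proposition is essentially a corollary obtained by swapping one normal cone for another. The only point requiring a shred of care is making sure the equality $N^P = N^C$ is invoked with the correct ambient set ($\epi f$ as a closed convex subset of $\varSpace\times\RR$, not $\varSpace\times\RR_{++}$, so that projections and the firm-nonexpansiveness argument are unambiguous) and confirming that lower semicontinuity is exactly what is needed to know $\epi f$ is closed so that Proposition~\ref{prop:semicontinuity-radial-NS} applies. Everything else is bookkeeping.
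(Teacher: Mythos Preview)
Your proposal is correct and matches the paper's approach exactly: the paper introduces Proposition~\ref{prop:convex-radial-NS} with the single sentence ``For convex functions, the proximal subdifferential and convex subdifferential are equal giving the following characterization'' and provides no further proof, so your reduction to Proposition~\ref{prop:semicontinuity-radial-NS} via $N^P_{\epi f}=N^C_{\epi f}$ is precisely what is intended. If anything you are more careful than the paper about the ambient-space issue for projections.
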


For concave functions, we find that it suffices to have points arbitrarily close to the origin with nonzero function value. As a result, every upper semicontinuous concave function can be translated to become upper radial.

\begin{proposition}\label{prop:concave-radial-NS}
	An upper semicontinuous concave $f$ has $f^p(y,\cdot)$ nondecreasing if and only if $0\in\closure\{x \mid f(x)>0\}$ or $f=0$.
\end{proposition}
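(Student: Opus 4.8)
The plan is to prove both directions by working with the ray condition from Lemma~\ref{lem:ray-def-of-radial-functions}, i.e. to show that $f^p(y,\cdot)$ is nondecreasing for every $y$ exactly when $0 \in \closure\{x \mid f(x) > 0\}$ (assuming $f \not\equiv 0$). Recall that by Lemma~\ref{lem:ray-def-of-radial-functions}(iii), the condition ``$f^p(y,\cdot)$ nondecreasing for all $y$'' is equivalent to: every $(x,u) \in \hypo f$ and $t \geq 1$ have $(x,u)/t \in \hypo f$, i.e. $\hypo f$ is star-shaped with respect to the origin. So the real content is: \emph{an upper semicontinuous concave $f$ (not identically $0$) has $\hypo f$ star-shaped about the origin iff $0$ is in the closure of $\{f > 0\}$.}

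First I would handle the easy direction ($\Rightarrow$): if $\hypo f$ is star-shaped about the origin and $f \not\equiv 0$, pick any $x_0$ with $f(x_0) = u_0 > 0$, so $(x_0, u_0) \in \hypo f$. Star-shapedness gives $(x_0/t, u_0/t) \in \hypo f$ for all $t \geq 1$, hence $f(x_0/t) \geq u_0/t > 0$. Letting $t \to \infty$, the points $x_0/t \to 0$ all lie in $\{f > 0\}$, so $0 \in \closure\{f > 0\}$. (This direction doesn't even use concavity or semicontinuity.)

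For the converse ($\Leftarrow$), assume $0 \in \closure\{f > 0\}$ and take any $(x,u) \in \hypo f$ with $u > 0$ (the case $u$ arbitrarily small, or $\hypo f$ points with $f(x) \geq u$, reduces to showing $(x,u)/t \in \hypo f$, i.e. $f(x/t) \geq u/t$). Choose a sequence $x_k \to 0$ with $f(x_k) > 0$. For fixed $t \geq 1$, write $x/t$ as a convex combination: $x/t = (1 - 1/t)\cdot 0 + (1/t)\cdot x$ — but since $f$ need not be positive at $0$, instead perturb to $x/t \approx (1 - s_k) x_k + s_k x$ with $s_k$ chosen so that the combination lands near $x/t$; as $x_k \to 0$, one can take $s_k \to 1/t$. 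Concavity gives $f((1-s_k)x_k + s_k x) \geq (1 - s_k) f(x_k) + s_k f(x) \geq s_k u$ (dropping the nonnegative $f(x_k)$ term). Then upper semicontinuity of $f$ lets me pass to the limit: $f(x/t) \geq \limsup_k f((1-s_k)x_k + s_k x) \geq (1/t) u$, so $(x,u)/t \in \hypo f$. This establishes star-shapedness, hence $f^p(y,\cdot)$ nondecreasing.

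The main obstacle is the limiting argument in the converse: I must be careful that $f$ is finite (lies in $\RR_{++}$, not $\infty$) along the relevant combinations so that concavity applies in the form I want, and I must correctly use upper semicontinuity — which for these extended-positive-valued functions means $f(x/t) = \limsup_{x' \to x/t} f(x')$, or equivalently $\hypo f$ closed in $\varSpace \times \RR_{++}$ — to get the inequality to survive the limit. A subtlety is that the limit point $x/t$ might have $f(x/t) = 0$ only if it's a genuine boundary effect, but the concavity bound forces $f$ to stay at least $s_k u$ on a set accumulating at $x/t$, and then either $x/t \in \dom f$ with $f(x/t) \geq u/t$, or the $\hypo f$-closedness argument still delivers $(x/t, u/t) \in \closure(\hypo f) = \hypo f$. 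I would also note at the end that combining this with Theorem~\ref{thm:radial-function-duality} and Proposition~\ref{prop:semicontinuity-radial-NS} yields the advertised consequence that any upper semicontinuous concave function can be translated to become upper radial.
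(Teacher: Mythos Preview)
Your proposal is correct and follows essentially the same strategy as the paper: the forward direction is identical, and for the converse both arguments amount to showing that the half-open segment $((0,0),(x,u)]$ lies in $\hypo f$. The paper's version is slightly more streamlined because it works directly at the hypograph level---observing that $0\in\closure\{f>0\}$ gives $(0,0)\in\closure\hypo f$, and then convexity plus closedness of $\hypo f$ in $\varSpace\times\RR_{++}$ immediately forces $((0,0),(x,u)]\subseteq\hypo f$---which sidesteps the finiteness and limit-passing issues you (correctly) flag and resolve at the function level.
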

\begin{proof}
	Trivially $f=0$ has $f^p(y,\cdot)$ nondecreasing and so we assume some $x'$ has $f(x')>0$.
	Then $f^p(y,\cdot)$ being nondecreasing implies all $t\geq 1$ have
	$f(x'/t)\geq f(x')/t>0.$
	Taking $t\rightarrow \infty$ gives a sequence verifying $0\in\closure\{y \mid f(y)>0\}$.
	
	Conversely, suppose $0\in\closure\{x \mid f(x)>0\}$ and consider any $(x,u)\in\hypo f$. Since $\hypo f$ is closed and convex and $(0,0) \in \closure\hypo f$, the line segment $((0,0),(x,u)]$ must lie in $\hypo f$. This is equivalent to $f^p(y,\cdot)$ being nondecreasing by Lemma~\ref{lem:ray-def-of-radial-functions}. \qed
\end{proof}
Furthermore, if the origin lies in the interior of $\{x\mid f(x)>0\}$ for any concave $f$, we find that $f^p(x,\cdot)$ is strictly increasing on its domain, and thus $\radTransSup{f}=\radTransInf{f}$ by~\eqref{eq:strict-increase-condition}. This condition can easily be attained for any concave $f$ whenever a point in the interior of the function's domain is known by translating it to the origin.
This directly corresponds to the setting assumed by Grimmer~\cite{Grimmer2017-radial-subgradient} and is equivalent to the conic setting assumed by Renegar~\cite{Renegar2016}.  
\begin{proposition}\label{prop:concave-strictly-radial-S}
	A concave $f$ has $f^p(y,\cdot)$ strictly increasing on its domain if $0\in\interior\{x \mid f(x)>0\}$.
\end{proposition}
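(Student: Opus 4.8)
The plan is to show that the condition $0 \in \interior\{x \mid f(x) > 0\}$ forces the perspective slice $v \mapsto f^p(y,v) = v\cdot f(y/v)$ to be strictly increasing on its domain, using the recession/cone structure of $\hypo f$ that this interiority condition provides. First I would recall from Proposition~\ref{prop:concave-radial-NS} (its hypothesis is implied here, since an interior point of a set lies in its closure) that $f^p(y,\cdot)$ is already nondecreasing; so by Lemma~\ref{lem:ray-def-of-radial-functions}, for every $(x,u) \in \hypo f$ and $t \geq 1$ we have $(x,u)/t \in \hypo f$, equivalently the segment $((0,0),(x,u)]$ lies in $\hypo f$. It remains to rule out a flat stretch: I must show there are no $0 < t < t'$ in the domain of $f^p(y,\cdot)$ with $f^p(y,t) = f^p(y,t')$.

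The key step is to exploit that $0 \in \interior\{x \mid f(x) > 0\}$ means there is a small ball $B((0,0),r)$ of points all having strictly positive function value, hence (taking heights just below these positive values) a small set sitting strictly inside $\hypo f$ near the origin; concretely there is some $\rho > 0$ with $B((0,0),\rho) \cap (\varSpace \times \RR_{++}) \subseteq \hypo f$. Now suppose toward a contradiction that $f^p(y,t) = f^p(y,t') =: c$ for some $0 < t < t'$ in the domain. The two points $P = (y/t', c/t')$ and $Q = (y/t, c/t)$ both lie in $\graph f$, hence in $\hypo f$. I would then write a point of the form $(y/s, \alpha)$ with $t < s < t'$ as a convex combination of $Q$ and a point of $B((0,0),\rho)\cap(\varSpace\times\RR_{++})$: pushing slightly past $P$ along the ray from the origin through $P$ — i.e. scaling $P$ down toward $0$ — lands inside the interior ball, and by convexity of $\hypo f$ the segment from $Q$ to that interior point stays in $\hypo f$ but pokes strictly above the graph value $f(y/s) = c/s$ at the intermediate abscissa $y/s$, since on $(t,t')$ the nondecreasing map $f^p(y,\cdot)$ is constant at $c$ while the chord from $Q$ through the (strictly subfeasible, positive) interior point has strictly larger height there. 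This contradicts $f^p(y,s) = c$, i.e. $(y/s, \alpha) \in \hypo f$ with $\alpha > c/s$ violating nothing — rather it shows $f(y/s) \geq \alpha > c/s$, contradicting $f^p(y,s)=c$; equivalently, strict concavity-type pinching forces $f^p(y,\cdot)$ to strictly increase.

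A cleaner route, which I would actually carry out, phrases the same idea via the partial derivative computation already noted after Proposition~\ref{prop:differentiable-radial-NS}: strict monotonicity of $f^p(y,\cdot)$ is equivalent to the segment from $(0,0)$ to each graph point $(x,f(x))$ meeting $\graph f$ only at its upper endpoint. Because $\hypo f$ is convex and contains a full neighborhood of the origin intersected with the upper half-space, any graph point $(x,f(x))$ is the apex of a genuine cone inside $\hypo f$ (convex hull of $(x,f(x))$ with that origin-neighborhood), so no other graph point can lie on the open segment $((0,0),(x,f(x)))$ — such a point would be in the interior of $\hypo f$, not on its boundary graph. Translating "the segment hits the graph only at the top" back through Lemma~\ref{lem:ray-def-of-radial-functions} and~\eqref{eq:strict-increase-condition} gives that $f^p(y,\cdot)$ is strictly increasing wherever finite and positive, i.e. on its domain. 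The main obstacle is the bookkeeping at the boundary of $\dom f$ and where $f^p$ takes the value $0$ or $+\infty$: one has to be careful that "strictly increasing on its domain" only constrains the region where $f^p(y,\cdot)$ is real-valued and positive, and that the origin-neighborhood argument genuinely produces points strictly interior to $\hypo f$ rather than merely on its boundary — this is exactly where $\interior$ (rather than $\closure$) is essential and is the step I expect to require the most care.
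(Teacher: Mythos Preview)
Your ``cleaner route'' is exactly the paper's proof: once $0\in\interior\{x\mid f(x)>0\}$, the open segment $((0,0),(y/v,f(y/v)))$ lies in $\interior\hypo f$ by the line-segment principle for convex sets, so for any $v'>v$ the rescaled point $(y/v',(v/v')f(y/v))$ is interior to $\hypo f$, forcing $f(y/v')>(v/v')f(y/v)$ and hence $f^p(y,v')>f^p(y,v)$. The paper does this directly in three lines rather than by contradiction, but the geometric content is identical; your first (contradiction) route is a more roundabout packaging of the same idea and can be dropped.
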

\begin{proof}
	Consider any $y\in\varSpace$ and $0<v<v'$ with $v\cdot f(y/v)\in\RR_{++}$. Since $(y/v, f(y/v))\in\hypo f$, the concavity of $f$ ensures that the line segment $((0,0),(y/v, f(y/v)))$ lies in the interior of the hypograph of $f$. In particular,
	$(v/v')\cdot (y/v, f(y/v)) = (y/v', (v/v')\cdot f(y/v))\in\interior\hypo f$. Therefore $f(y/v') > (v/v')f(y/v)$ and so $f^p(y,v) < f^p(y,v')$. \qed
\end{proof}

\subsection{Closure of Radial Functions Under Common Operations}\label{subsec:closure-radial}
Building on our characterizations of when important classes of functions are upper or lower radial, here we show that this structure is preserved under many common operations.
The following result shows this is the case for any conic combination (that is, $\sum_{i=1}^k \lambda_i f_i(x)$ with each $\lambda_i> 0$), composition with linear maps, and taking finite minimums and maximums.
\begin{proposition}\label{prop:preserving-radial}
	For any pair of (strictly) upper radial functions $f_1,f_2$, the following functions are also (strictly) upper radial functions:\\
	(i) Positive Rescaling by $\lambda>0$: $\lambda \cdot f_1 $,\\
	(ii) Composition with a linear map $A \colon \varSpace' \rightarrow \varSpace$: $f_1\circ A$,\\
	(iii) Addition: $f_1+f_2$,\\
	(iv) Minimums: $\min\{f_1,f_2\}$,\\
	(v) Maximums: $\max\{f_1,f_2\}$.\\
	Likewise, these operations all preserve being (strictly) lower radial.
\end{proposition}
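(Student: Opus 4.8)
The plan is to verify each of the five closure properties by reducing to the defining condition that $f^p(y,\cdot)$ is nondecreasing and upper semicontinuous for every $y \in \varSpace$ (and strictly increasing on its domain in the strict case). The key observation that makes (i)--(iii) immediate is that the perspective operation is linear in $f$: $(\lambda f)^p(y,v) = \lambda f^p(y,v)$ and $(f_1+f_2)^p(y,v) = f_1^p(y,v) + f_2^p(y,v)$. Since a positive multiple of a nondecreasing function is nondecreasing, and a sum of nondecreasing functions is nondecreasing (and likewise for strictly increasing on the common domain, and for upper semicontinuity, as long as one is careful with the $+\infty$ conventions on $\extPos$), properties (i) and (iii) follow. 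For (ii), observe $(f_1 \circ A)^p(y,v) = v \cdot f_1(Ay/v) = f_1^p(Ay, v)$, so the perspective slice of $f_1 \circ A$ at $y$ is literally the perspective slice of $f_1$ at $Ay$; all the required one-dimensional properties transfer verbatim.

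For (iv) and (v), I would again use that the perspective commutes with pointwise min and max in the first argument: $(\min\{f_1,f_2\})^p(y,v) = \min\{f_1^p(y,v), f_2^p(y,v)\}$ and similarly for max. The pointwise minimum (respectively maximum) of two nondecreasing functions of $v$ is nondecreasing, and the pointwise minimum (maximum) of two upper semicontinuous functions of one variable is upper semicontinuous, so the non-strict statements are done. Alternatively one could argue at the level of hypographs using Lemma~\ref{lem:ray-def-of-radial-functions}: $\hypo \min\{f_1,f_2\} = \hypo f_1 \cap \hypo f_2$ and $\hypo \max\{f_1,f_2\} = \hypo f_1 \cup \hypo f_2$, and condition (iii) of that lemma (closure of the hypograph under division by $t \ge 1$) is clearly preserved under both intersections and unions; upper semicontinuity of $f^p(y,\cdot)$ is likewise inherited. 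I would pick whichever of these two phrasings is cleanest and just present that one.

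The one genuinely delicate point, and the step I expect to require the most care, is the strict case for minimums and maximums. Strict monotonicity of $f^p(y,\cdot)$ is required only on its domain, i.e. where the value lies in $\RR_{++}$; the domain of $(\min\{f_1,f_2\})^p(y,\cdot)$ is the intersection of the two domains, while the domain of the max is the union. On the intersection, if $f_1^p(y,\cdot)$ and $f_2^p(y,\cdot)$ are each strictly increasing then so is their pointwise minimum (for $v < v'$ in the common domain, $\min\{f_1^p(y,v),f_2^p(y,v)\} < \min\{f_1^p(y,v'),f_2^p(y,v')\}$ because each term strictly increases), so min is fine. For max, suppose $v < v'$ both lie in $\dom (\max\{f_1,f_2\})^p(y,\cdot)$; then at least one $f_i$ is finite at $v'$, hence (by the already-established monotonicity of $f_i^p(y,\cdot)$) finite and in its domain at $v$, so $f_i^p(y,v) < f_i^p(y,v')$, and combining with $f_j^p(y,v) \le f_j^p(y,v')$ for the other index gives strict increase of the max. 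I would write this out carefully, also noting that the $+\infty$ value is handled correctly by the extended-arithmetic conventions, since the $\extPos$ bookkeeping is exactly where an otherwise-routine argument can go wrong. The mirrored statements for lower radial functions follow by the symmetric argument, using epigraphs and condition (ii) of Lemma~\ref{lem:ray-def-of-radial-functions} in place of hypographs, so I would simply remark that they hold by the parallel reasoning.
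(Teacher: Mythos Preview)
Your approach is exactly the paper's: the paper's entire proof is the single sentence ``Each of these operations preserves upper and lower semicontinuity and being nondecreasing (or strictly increasing). Consequently, they preserve being (strictly) upper or lower radial.'' You are simply supplying the details behind that sentence.

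One small correction in the strict case for $\max$: as written, you pick the index $i$ that is positive at $v'$ and then claim monotonicity puts $v$ in $\dom f_i^p(y,\cdot)$, but monotonicity only gives $f_i^p(y,v)\le f_i^p(y,v')<\infty$, not $f_i^p(y,v)>0$; and even granting $f_i^p(y,v)<f_i^p(y,v')$, the other index could still have $f_j^p(y,v)=f_j^p(y,v')=M(v')$, so the max need not strictly increase from your inequalities. The clean fix is to track the index achieving the max at $v$ instead: if $M(v)=f_i^p(y,v)\in\RR_{++}$ then $v\in\dom f_i^p(y,\cdot)$, and since $f_i^p(y,v')\ge f_i^p(y,v)>0$ and $\le M(v')<\infty$ also $v'\in\dom f_i^p(y,\cdot)$, so strict monotonicity of $f_i^p(y,\cdot)$ gives $M(v)=f_i^p(y,v)<f_i^p(y,v')\le M(v')$.
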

\begin{proof}
	Each of these operations preserves upper and lower semicontinuity and being nondecreasing (or strictly increasing). Consequently, they preserve being (strictly) upper or lower radial. \qed
\end{proof}
Note that (i) and (iii) above together give the claimed result for conic combinations.
For all of these operations except addition, we can give simple formulas for the resulting radial transformation, formalized in the following proposition.
\begin{proposition}\label{prop:radial-formulas}
	For any pair of functions $f_1,f_2$, the following identities hold for any $\lambda>0$ and linear $A \colon \varSpace' \rightarrow \varSpace$
	\begin{align*}
		\radTransSup{(\lambda \cdot f_1)}(y) &= \radTransSup{f_1}(\lambda y)/\lambda, \\
		\radTransSup{(f_1\circ A)} &= \radTransSup{f_1}\circ A,\\
		\radTransSup{\min\{f_1,f_2\}} &= \max\{\radTransSup{f_1},\radTransSup{f_2}\}.
	\end{align*}
	Further, if $f_1,f_2$ are upper radial, then
	\begin{align*}
		\radTransSup{\max\{f_1,f_2\}} &= \min\{\radTransSup{f_1},\radTransSup{f_2}\}.
	\end{align*}
	Likewise, for the lower radial transformation $\radTransInf{(\lambda \cdot f_1)}(y) = \radTransInf{(f_1)}(\lambda y)/\lambda$, $\radTransInf{(f_1\circ A)} = \radTransInf{(f_1)}\circ A$ and $\radTransInf{\max\{f_1,f_2\}} = \min\{\radTransInf{(f_1)},\radTransInf{(f_2)}\}$, and when given lower radial functions, $\radTransInf{\min\{f_1,f_2\}} = \max\{\radTransInf{(f_1)},\radTransInf{(f_2)}\}$.
\end{proposition}
\begin{proof}
	The results for positive rescaling by some $\lambda>0$, for composition with a linear map $A \colon \varSpace' \rightarrow \varSpace$, and for minimums follow immediately from the definition of our radial transformation as
	\begin{align*}
	\radTransSup{(\lambda\cdot f)}(y) &= \sup\{v>0 \mid \lambda v\cdot f(y/v)\leq 1\}\\
	&= \sup\{w>0 \mid w\cdot f(\lambda y/ w)\leq 1\}/\lambda
	= \radTransSup{f}(\lambda y)/\lambda,\\
	\radTransSup{(f\circ A)}(y) &= \sup\{v>0 \mid v\cdot f(Ay/v)\leq 1\}\\
	&= \radTransSup{f}(Ay),\\
	\radTransSup{\min\{f_1,f_2\}}(y) &= \sup\{v>0 \mid v\cdot \min\{f_1(y/v), f_2(y/v)\}\leq 1\}\\
	&= \max\limits_{i=1,2}\{\sup\{v>0 \mid v\cdot f_i(y/v)\leq 1\}\}
	= \max\{\radTransSup{f_1}(y), \radTransSup{f_2}(y)\}.
	\end{align*}
	The claimed formula for maximums follows as
	\begin{align*}
	\radTransSup{\max\{f_1,f_2\}}(y) &= \sup\{v>0 \mid v\cdot \max\{f_1(y/v), f_2(y/v)\}\leq 1\}\\
	&= \min\limits_{i=1,2}\{\sup\{v>0 \mid v\cdot f_i(y/v)\leq 1\}\}
	= \min\{\radTransSup{f_1}(y), \radTransSup{f_2}(y)\}
	\end{align*}
	where the second equality above relies on $v\cdot f_i(y/v)$ being nondecreasing. \qed
\end{proof}

From these simple operations, we can build up to more complex functions that preserve being upper radial. For example, consider the operations of taking the $k$th largest or smallest element out of a set of $n$ elements
\begin{align*}
	k\mhyphen\min\{x_1,\dots,x_n\} &:= x_{i_k} \text{ where } x_{i_1}\leq x_{i_2} \leq \dots \leq x_{i_n}\\
	k\mhyphen\max\{x_1,\dots,x_n\} &:= x_{i_{n-k+1}} \text{ where } x_{i_1}\leq x_{i_2} \leq \dots \leq x_{i_n}.
\end{align*}
and averaging the $k$ largest or smallest elements 
\begin{align*}
k\mhyphen\mathrm{minavg}\{x_1,\dots,x_n\} &:= \frac{1}{k}\sum^k_{j=1}x_{i_j} \text{ where } x_{i_1}\leq x_{i_2} \leq \dots \leq x_{i_n}\\
k\mhyphen\mathrm{maxavg}\{x_1,\dots,x_n\} &:= \frac{1}{k}\sum^k_{j=1}x_{i_{n-j+1}} \text{ where } x_{i_1}\leq x_{i_2} \leq \dots \leq x_{i_n}.
\end{align*}
\begin{corollary}
	For any (strictly) upper radial functions $f_1,\dots,f_n$, the functions $k\mhyphen\min\{f_i(x)\}$, $k\mhyphen\max\{f_i(x)\}$, $k\mhyphen\mathrm{minavg}\{f_i(x)\}$, and $k\mhyphen\mathrm{maxavg}\{f_i(x)\}$ are all (strictly) upper radial with
	$ \left(k\mhyphen\min\{f_i(x)\}\right)^\Gamma(y) = k\mhyphen\max\{f^\Gamma_i(y)\}.$
\end{corollary}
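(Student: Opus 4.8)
The plan is to express all four order-statistic operations as nestings of the elementary operations already treated in Propositions~\ref{prop:preserving-radial} and~\ref{prop:radial-formulas}, namely conic combinations and finite minimums and maximums. Writing $\binom{[n]}{k}$ for the family of $k$-element subsets of $\{1,\dots,n\}$, the first step is to record the purely combinatorial identities
\[
k\mhyphen\min\{x_1,\dots,x_n\} = \min_{S\in\binom{[n]}{k}}\,\max_{i\in S}x_i,
\qquad
k\mhyphen\max\{x_1,\dots,x_n\} = \max_{S\in\binom{[n]}{k}}\,\min_{i\in S}x_i,
\]
\[
k\mhyphen\mathrm{minavg}\{x_1,\dots,x_n\} = \min_{S\in\binom{[n]}{k}}\,\frac{1}{k}\sum_{i\in S}x_i,
\qquad
k\mhyphen\mathrm{maxavg}\{x_1,\dots,x_n\} = \max_{S\in\binom{[n]}{k}}\,\frac{1}{k}\sum_{i\in S}x_i.
\]
Each is a two-line check on the sorted values $x_{i_1}\le\cdots\le x_{i_n}$: for $k\mhyphen\min$, every $k$-subset contains an index with value at least $x_{i_k}$, so $\max_{i\in S}x_i\ge x_{i_k}$, while $S=\{i_1,\dots,i_k\}$ attains equality; for the averaging versions, the $k$ smallest (resp.\ largest) indices minimize (resp.\ maximize) the $k$-term partial sum; the $k\mhyphen\max$ identity is symmetric.

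Next I would establish the first assertion. Each inner expression $\max_{i\in S}f_i$, $\min_{i\in S}f_i$, and $\frac{1}{k}\sum_{i\in S}f_i$ is respectively a finite maximum, a finite minimum, and a conic combination of (strictly) upper radial functions, hence (strictly) upper radial by Proposition~\ref{prop:preserving-radial} (using that parts (i) and (iii) together give conic combinations); taking a further finite minimum or maximum over $S\in\binom{[n]}{k}$ preserves this property by the same proposition. Therefore $k\mhyphen\min\{f_i(x)\}$, $k\mhyphen\max\{f_i(x)\}$, $k\mhyphen\mathrm{minavg}\{f_i(x)\}$, and $k\mhyphen\mathrm{maxavg}\{f_i(x)\}$ are (strictly) upper radial, and the lower radial case is identical.

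Finally, for the transformation formula I would apply Proposition~\ref{prop:radial-formulas} twice, working from the outside in. Since the min-formula $\radTransSup{\min\{g_1,g_2\}}=\max\{\radTransSup{g_1},\radTransSup{g_2}\}$ holds with no hypothesis, repeatedly applying the binary identity gives $\radTransSup{(k\mhyphen\min\{f_i\})} = \max_{S}\radTransSup{(\max_{i\in S}f_i)}$; and since each $f_i$ (and each intermediate maximum) is upper radial, the max-formula applies to give $\radTransSup{(\max_{i\in S}f_i)} = \min_{i\in S}\radTransSup{f_i} = \min_{i\in S}f_i^\Gamma$. Combining these with the $k\mhyphen\max$ identity above applied to the numbers $f_i^\Gamma(y)$ yields
\[
\bigl(k\mhyphen\min\{f_i(x)\}\bigr)^\Gamma(y) = \max_{S\in\binom{[n]}{k}}\,\min_{i\in S}f_i^\Gamma(y) = k\mhyphen\max\{f_i^\Gamma(y)\}.
\]
I do not expect a genuine obstacle here: the combinatorial identities are the substantive content but are elementary, and the only point needing care is that the equality form of the max-distribution law in Proposition~\ref{prop:radial-formulas} requires each maximand to be upper radial — which is exactly what Proposition~\ref{prop:preserving-radial} guarantees is propagated through every nesting level.
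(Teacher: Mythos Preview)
Your proposal is correct and follows essentially the same approach as the paper: both express the four order-statistic operations via the combinatorial identities as nested $\min$/$\max$ and conic combinations, then invoke Propositions~\ref{prop:preserving-radial} and~\ref{prop:radial-formulas}. You supply more detail than the paper (justifying the combinatorial identities and explicitly tracking the upper radial hypothesis needed for the max-distribution law), but the underlying argument is the same.
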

This follows from Propositions~\ref{prop:preserving-radial} and \ref{prop:radial-formulas} since these operations can be described as combinations of minimums, maximums, rescaling, and addition 
\begin{align*}
&k\mhyphen\min\{f_1(x),\dots,f_n(x)\} = \min\{\max\{f_i(x)\mid i\in S\}\mid S\subseteq\{1,\dots,n\}, |S|=k\},\\
&k\mhyphen\max\{f_1(x),\dots,f_n(x)\} = \max\{\min\{f_i(x)\mid i\in S\}\mid S\subseteq\{1,\dots,n\}, |S|=k\},\\
&k\mhyphen\mathrm{minavg}\{f_1(x),\dots,f_n(x)\} =\min\!\left\{\frac{1}{k}\sum_{i\in S} f_i(x)\mid S\subseteq\{1,\dots,n\}, |S|=k\right\}\!,\\
&k\mhyphen\mathrm{maxavg}\{f_1(x),\dots,f_n(x)\} =\max\!\left\{\frac{1}{k}\sum_{i\in S} f_i(x)\mid S\subseteq\{1,\dots,n\}, |S|=k\right\}\!.
\end{align*}

\subsection{Radial Transformation of Semicontinuous Functions} \label{subsec:semicontinuous-radial}
Next we turn our focus to understanding how various important families of functions behave under the radial transformation. Considering the transformation of upper and lower semicontinuous functions shows that these become lower and upper semicontinuous, respectively, when $f$ is appropriately radial.
\begin{proposition} \label{prop:semicontinuous-condition}
	For any lower semicontinuous, lower radial $f$, $\radTransSup{f}$ is upper semicontinuous.
	Likewise, for any upper semicontinuous, upper radial $f$, $\radTransInf{f}$ is lower semicontinuous.
\end{proposition}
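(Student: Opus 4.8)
The plan is to realize the hypograph $\hypo \radTransSup{f}$ as the radial image $\radTransSet{(\epi f)}$ of a closed set, and then to use that the point transformation $\Gamma$ is a homeomorphism of $\varSpace\times\RR_{++}$ onto itself to transfer closedness from $\epi f$ to $\hypo \radTransSup{f}$. Closedness of $\hypo \radTransSup{f}$ in $\varSpace\times\RR_{++}$ is, by definition, exactly upper semicontinuity of $\radTransSup{f}$.

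First I would record the pointwise description of the relevant slices. For fixed $y\in\varSpace$ and $v>0$, one has $(y,v)\in\radTransSet{(\epi f)}$ iff $\Gamma(y,v)=(y/v,1/v)\in\epi f$, i.e.\ iff $f^p(y,v)=v\cdot f(y/v)\leq 1$; hence $\radTransSet{(\epi f)}=\{(y,v)\in\varSpace\times\RR_{++}\mid f^p(y,v)\leq 1\}$, and by definition $\radTransSup{f}(y)=\sup A_y$ with $A_y:=\{v>0\mid f^p(y,v)\leq 1\}$ (interpreting $\sup\emptyset=0$). The slice of $\hypo \radTransSup{f}$ over $y$ is $\{v>0\mid v\leq \sup A_y\}$, so it suffices to prove $A_y=\{v>0\mid v\leq \sup A_y\}$ for every $y$, i.e.\ that $\hypo \radTransSup{f}=\radTransSet{(\epi f)}$.

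This is the step where both halves of ``lower radial'' enter, and it is the only place where any care is needed. Since $f^p(y,\cdot)$ is nondecreasing, $A_y$ is a down-set of $(0,\infty)$ (equivalently, this is the star-shapedness of $\epi f$ from Lemma~\ref{lem:ray-def-of-radial-functions}); since $f^p(y,\cdot)$ is lower semicontinuous, $A_y$ is relatively closed in $(0,\infty)$. If $A_y=\emptyset$ then $\sup A_y=0$ and both sides are empty. Otherwise $A_y$ contains all sufficiently small positive reals, and a nonempty relatively closed down-set of $(0,\infty)$ is either all of $(0,\infty)$ (when $\sup A_y=\infty$) or an interval $(0,s]$ with $s=\sup A_y<\infty$; in either case $A_y=\{v>0\mid v\leq \sup A_y\}$. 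The delicate point is precisely the membership of $v=\radTransSup{f}(y)$: lower semicontinuity of $f^p(y,\cdot)$ is exactly what forbids $A_y$ from being the half-open interval $(0,s)$, and the degenerate values $\radTransSup{f}(y)\in\{0,\infty\}$ are handled by the empty and unbounded cases above.

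Finally, lower semicontinuity of $f$ says exactly that $\epi f$ is closed in $\varSpace\times\RR_{++}$; since the point transformation is a continuous bijection of $\varSpace\times\RR_{++}$ onto itself with continuous inverse (itself), it carries sets closed in $\varSpace\times\RR_{++}$ to sets closed in $\varSpace\times\RR_{++}$, so $\radTransSet{(\epi f)}$ is closed, and therefore so is $\hypo \radTransSup{f}$; that is, $\radTransSup{f}$ is upper semicontinuous. The statement for $\radTransInf{f}$ follows by the mirror argument, with $\epi f$ and $\hypo f$ interchanged, down-sets replaced by up-sets, lower semicontinuity replaced by upper semicontinuity, and $\sup$ by $\inf$.
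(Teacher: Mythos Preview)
Your proof is correct and complete; it just takes a different route from the paper. The paper argues pointwise via the $\limsup$ characterization: for any $\gamma>\radTransSup{f}(y)$ one has $\gamma\cdot f(y/\gamma)>1$, lower semicontinuity of $f$ propagates this strict inequality to a neighborhood in $y'$, and then the nondecreasing part of ``lower radial'' forces $\radTransSup{f}(y')\leq\gamma$. No set identity or topological statement about $\Gamma$ is invoked.

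Your argument is more structural: you prove the equality $\hypo\radTransSup{f}=\radTransSet{(\epi f)}$ (using both halves of ``lower radial'' to show each vertical slice $A_y$ is a closed down-set of $(0,\infty)$ and hence equals $(0,\sup A_y]$), and then transfer closedness from $\epi f$ through the homeomorphism $\Gamma$. This buys you an explicit set-level identity that the paper does not isolate here; in fact it anticipates and complements Lemma~\ref{lem:hypo-epi-inclusion}, which records the companion inclusion $\hypo\radTransInf{f}\subseteq\radTransSet{(\epi f)}$ for lower radial $f$ with equality only under strict monotonicity, whereas your identity for $\radTransSup{f}$ holds with equality already under ``lower radial''. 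The paper's approach, by contrast, is shorter and avoids the case analysis on $A_y$. Both rely on exactly the same hypotheses at exactly the same junctures.
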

\begin{proof}
	Consider any $y\in\varSpace$. Upper semicontinuity trivially holds at $y$ if $\radTransSup{f}(y)=\infty$. Now assume $\radTransSup{f}(y)<\infty$ and consider any $\gamma > \radTransSup{f}(y)$. Then $\gamma\cdot f(y/\gamma)>1$. From the lower semicontinuity of $f$, for some $\epsilon>0$, all $y'\in B(y,\epsilon)$ satisfy $\gamma\cdot f(y'/\gamma)> 1$. Therefore $\radTransSup{f}(y')\leq \gamma$. Taking the limit as $\gamma$ approaches $\radTransSup{f}(y)$ shows $\radTransSup{f}(y) = \limsup_{y'\rightarrow y} \radTransSup{f}(y')$. \qed
\end{proof}

These results with upper and lower semicontinuity reversed do not hold in general. For example, see the absolute value function in Figures~\ref{fig:func-p1}, which is continuous and both upper and lower radial but its upper transformation (Figure~\ref{fig:func-u1}) is only upper semicontinuous and its lower transformation (Figure~\ref{fig:func-l1}) is only lower semicontinuous. However, whenever $\radTransSup{f}=\radTransInf{f}$, the reversed propositions immediately hold. Thus, for strictly upper (lower) radial functions, upper semicontinuity and lower semicontinuity are dual to each other.

\begin{proposition} \label{prop:continuous-condition}
	For any $f$ with $f^p(y,\cdot)$ strictly increasing on its domain for every fixed $y\in\varSpace$,
	\begin{align*}
	f\text{\ upper\ semicontinuous} &\implies \radTransSup{f}= \radTransInf{f}\text{\ lower\ semicontinuous},\\
	f\text{\ lower\ semicontinuous} &\implies \radTransSup{f}=\radTransInf{f}\text{\ upper\ semicontinuous},\\
	f\text{\ continuous} &\implies \radTransSup{f}=\radTransInf{f}\text{\ continuous}.
	\end{align*}
\end{proposition}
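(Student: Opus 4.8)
The plan is to establish the three implications of Proposition~\ref{prop:continuous-condition} by reducing each to a corresponding semicontinuity result already proven. Observe first that the hypothesis---$f^p(y,\cdot)$ strictly increasing on its domain---is precisely the condition in~\eqref{eq:strict-increase-condition}, so it immediately forces $\radTransSup{f}=\radTransInf{f}$. This collapses the two transformations into one object, which is what lets us transfer semicontinuity in both directions.

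For the first implication, suppose $f$ is upper semicontinuous. I would like to invoke Proposition~\ref{prop:semicontinuous-condition}, but that statement as written gives lower semicontinuity of $\radTransInf{f}$ only for \emph{upper radial} $f$; here strict monotonicity of $f^p(y,\cdot)$ certainly gives that $f^p(y,\cdot)$ is nondecreasing, and combined with the assumed upper semicontinuity of $f$ one checks $f^p(y,\cdot)$ is upper semicontinuous in $v$ (the perspective of an u.s.c. function is u.s.c. on $\RR_{++}$), so $f$ is indeed upper radial. Then Proposition~\ref{prop:semicontinuous-condition} yields $\radTransInf{f}$ lower semicontinuous, and since $\radTransSup{f}=\radTransInf{f}$ we are done. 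The second implication is the mirror image: $f$ lower semicontinuous with $f^p(y,\cdot)$ nondecreasing makes $f$ lower radial, so the ``likewise'' half of Proposition~\ref{prop:semicontinuous-condition} gives $\radTransSup{f}$ upper semicontinuous, and again the equality of the two transforms closes it.

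For the third implication, continuity of $f$ gives both upper and lower semicontinuity, so applying the first two implications simultaneously shows $\radTransSup{f}=\radTransInf{f}$ is both lower and upper semicontinuous, hence continuous. One should double-check the behavior at points where $\radTransSup{f}$ takes the boundary values $0$ or $\infty$, but these are handled exactly as in the proof of Proposition~\ref{prop:semicontinuous-condition} (the $\infty$ case is trivial for u.s.c.; the $0$ case is trivial for l.s.c. since $\radTransSup{f}\geq 0$ always).

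The main obstacle, such as it is, is bookkeeping rather than mathematics: one must verify carefully that ``$f^p(y,\cdot)$ strictly increasing on its domain'' together with one-sided semicontinuity of $f$ upgrades to $f$ being (strictly) upper or lower \emph{radial} in the precise sense of the definition---i.e. that semicontinuity of $f$ really does transfer to the requisite semicontinuity of $v\mapsto v f(y/v)$ on all of $\RR_{++}$, including across the boundary of $\dom f^p(y,\cdot)$ where the perspective drops to $0$ or jumps to $\infty$. Once that is pinned down, everything else is an immediate appeal to Proposition~\ref{prop:semicontinuous-condition} and~\eqref{eq:strict-increase-condition}.
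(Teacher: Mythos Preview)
Your proposal is correct and follows essentially the same approach as the paper: invoke~\eqref{eq:strict-increase-condition} to get $\radTransSup{f}=\radTransInf{f}$, then apply each direction of Proposition~\ref{prop:semicontinuous-condition}. The paper's proof is a single sentence (``Since $\radTransSup{f}=\radTransInf{f}$ by~\eqref{eq:strict-increase-condition}, both directions of Proposition~\ref{prop:semicontinuous-condition} apply''), so your version is simply a more explicit unpacking---in particular, your verification that semicontinuity of $f$ combined with the monotonicity hypothesis yields upper/lower radiality is exactly the bookkeeping the paper leaves implicit.
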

\begin{proof}
	Since $\radTransSup{f}=\radTransInf{f}$ by~\eqref{eq:strict-increase-condition}, both directions of Proposition~\ref{prop:semicontinuous-condition} apply. \qed
\end{proof}

\subsection{Radial Transformation of Piecewise Linear Functions}\label{subsec:piecewise-linear-radial}
We say a function $f$ is {\it convex polyhedral} if $\epi f$ is the intersection of finitely many halfspaces and $\varSpace\times\RR_{++}$. Likewise, $f$ is {\it concave polyhedral} if $\hypo f$ is the intersection of finitely many halfspaces and $\varSpace\times\RR_{++}$. Recall Corollary~\ref{cor:polyhedral-sets} ensures polyhedral sets map to polyhedral sets under the radial set transformation. The following proposition shows how this property is mirrored by the radial function transformation on polyhedral functions.
\begin{proposition}\label{prop:piecewise-linear-conversion}
	If $f$ is convex polyhedral then $\radTransSup{f}$ is concave polyhedral.\\
	Likewise, if $f$ is concave polyhedral then $\radTransInf{f}$ is convex polyhedral.
\end{proposition}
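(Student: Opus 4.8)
The plan is to combine Corollary~\ref{cor:polyhedral-sets} — the radial set transformation sends polyhedra to polyhedra — with the standard fact that the Minkowski sum of two polyhedra is again a polyhedron. The only genuine wrinkle is that $\radTransSet{(\epi f)}$ need not itself be a hypograph (a convex polyhedral $f$ need not be upper radial, e.g.\ $f(x)=\max\{\langle a,x\rangle-1,0\}$), so one must pass from $\radTransSet{(\epi f)}$ to the hypograph it generates and check that this step keeps us inside the polyhedral class.

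First I would invoke Corollary~\ref{cor:polyhedral-sets}: since $\epi f$ is polyhedral, so is $\radTransSet{(\epi f)}$, say $\radTransSet{(\epi f)} = Q\cap(\varSpace\times\RR_{++})$ for a closed polyhedron $Q\subseteq\varSpace\times\RR$ (a finite intersection of halfspaces). In particular, for each $y\in\varSpace$ the slice $Q_y := \{v\in\RR \mid (y,v)\in Q\}$ is a closed interval, possibly empty or unbounded.

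Next I would identify $\hypo\radTransSup{f}$ with a Minkowski sum. By definition $\radTransSup{f}(y)=\sup\{v>0 \mid (y,v)\in Q\}$, with value $0$ when this set is empty. Since $Q_y$ is an interval, whenever it meets $(0,\infty)$ its supremum over positive $v$ equals $\sup Q_y$; and since $Q_y$ is closed, this supremum is attained whenever it is finite. A short case check — empty or nonpositive slice; finite attained supremum; infinite supremum — then gives, for every $w>0$,
\[
w\le\radTransSup{f}(y) \iff (y,v)\in Q \text{ for some } v\ge w,
\]
so that $\hypo\radTransSup{f}=\bigl(Q+(\{0_\varSpace\}\times\RR_{\le 0})\bigr)\cap(\varSpace\times\RR_{++})$. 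Because $\{0_\varSpace\}\times\RR_{\le 0}$ is a finite intersection of halfspaces in $\varSpace\times\RR$, the sum $Q+(\{0_\varSpace\}\times\RR_{\le 0})$ is a polyhedron; intersecting with $\varSpace\times\RR_{++}$ then exhibits $\hypo\radTransSup{f}$ as the intersection of finitely many halfspaces and $\varSpace\times\RR_{++}$, i.e.\ $\radTransSup{f}$ is concave polyhedral. The mirrored claim follows by the symmetric argument (replacing $\epi$, $\sup$, and the downward ray by $\hypo$, $\inf$, and the upward ray $\{0_\varSpace\}\times\RR_{\ge 0}$).

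I expect the case check in the third step to be the only fiddly point: one has to reconcile the convention $\radTransSup{f}(y)=0$ when no positive $v$ works with the Minkowski-sum description, and track whether the defining supremum is attained (hence whether the hypograph's top boundary is included). Everything else is a black-box application of polyhedral calculus. If one prefers to avoid that calculus, an equivalent and fully elementary route is to write each transformed halfspace $\radTransSet{H_i}$ explicitly via Proposition~\ref{prop:halfspace-radial}, split the inequalities cutting out $Q$ according to the sign of their $v$-coefficient (lower bounds on $v$, upper bounds on $v$, and $y$-only feasibility conditions), and read off directly that $\hypo\radTransSup{f}$ is carved out by finitely many affine inequalities in $(y,w)$ together with $w>0$.
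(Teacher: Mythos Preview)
Your proposal is correct and follows essentially the same route as the paper: both apply Corollary~\ref{cor:polyhedral-sets} to get that $\Gamma(\epi f)$ is polyhedral, then identify $\hypo\radTransSup{f}$ as the ``downward closure'' of this set and invoke polyhedral projection to conclude. The paper writes this as $\hypo\radTransSup{f}=\{(y,v)\mid \exists v'\ge v,\ (y,v')\in\Gamma(\epi f)\}$ and appeals to Fourier--Motzkin, which is exactly your Minkowski-sum-with-a-ray argument in different clothing; your case analysis is a bit more explicit than the paper's one-line appeal to closedness, but the content is the same.
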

\begin{proof}
	If $\epi f$ is polyhedral, then Corollary~\ref{cor:polyhedral-sets} implies $\radTransSet{(\epi f)}$ is also polyhedral. Since $\radTransSet{(\epi f)}$ is closed with respect to $\varSpace\times\RR_{++}$, the hypograph of $\radTransSup{f}$ can be written as
	$$\hypo \radTransSup{f} = \{(y,v) \mid \exists v'\geq v,\ (y,v')\in\radTransSet{(\epi f)}\}.$$
	Then Fourier-Motzkin elimination ensures $\hypo \radTransSup{f}$ is polyhedral. \qed
\end{proof}
Like the previous results on semicontinuity, the converses do not hold in general. However, whenever $f^p(y,\cdot)$ is strictly increasing on its domain for all $y\in\varSpace$, they immediately hold as $\radTransSup{f}=\radTransInf{f}$.


\subsection{Radial Transformation of Concave/Convex Functions}\label{subsec:concave-convex-radial}
Recall from Proposition~\ref{prop:convex-set-radial} that the radial set transformation preserves convexity. This structure carries over to the function setting where convex functions become concave and vice versa.
\begin{proposition}\label{prop:concave-conversion}
	If $f$ is concave then $\radTransSup{f}$ and $\radTransInf{f}$ are convex.\\
	Likewise, if $f$ is convex then $\radTransSup{f}$ and $\radTransInf{f}$ are concave.
\end{proposition}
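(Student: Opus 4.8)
The proposition packages four implications---$\radTransSup{f}$ and $\radTransInf{f}$ are convex when $f$ is concave, and concave when $f$ is convex---and I would prove the two concerning $\radTransSup{f}$, obtaining the $\radTransInf{f}$ statements by the symmetric argument. The framing point that governs the whole proof is that here ``$g$ is convex (concave)'' means ``$\epi g$ ($\hypo g$) is a convex subset of $\varSpace\times\RR_{++}$'', which---because of the extended values $0,\infty$---is strictly weaker than the pointwise inequality; so every manipulation must be carried out at the level of epigraphs and hypographs, not via pointwise convexity estimates.

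Two of the four cases are immediate from Proposition~\ref{prop:convex-set-radial}. Suppose $f$ is convex, so $\epi f$ is convex and hence $K:=\radTransSet{(\epi f)}$ is convex. Since $\radTransSup{f}(y)=\sup\{v>0\mid (y,v)\in K\}$, we have $\hypo\radTransSup{f}=\{(y,v)\in\varSpace\times\RR_{++}\mid v\le\sup\{v'>0\mid(y,v')\in K\}\}$, and this set is convex by a one-line fiberwise argument: given two of its points $(y_0,v_0),(y_1,v_1)$ and $\lambda\in[0,1]$, for each small $\epsilon>0$ pick $v_i'\in K$-fiber over $y_i$ with $v_i'>v_i-\epsilon$, combine using convexity of $K$ to get $(1-\lambda)v_0'+\lambda v_1'$ in the $K$-fiber over $(1-\lambda)y_0+\lambda y_1$, and let $\epsilon\to 0$. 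Thus $\radTransSup{f}$ is concave. The mirror computation---$\hypo f$ convex, $\radTransSet{(\hypo f)}$ convex, $\radTransInf{f}(y)=\inf\{v>0\mid(y,v)\in\radTransSet{(\hypo f)}\}$---shows $\epi\radTransInf{f}$ is convex when $f$ is concave.

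For the other two cases $\epi f$ (resp.\ $\hypo f$) need not be convex, so I would route through the perspective $f^p(y,v)=v\cdot f(y/v)$. The key fact is that $\hypo f^p=\{(y,v,s)\mid v>0,\ (y/v,s/v)\in\hypo f\}$ is the positive cone generated by the affinely embedded copy $\{(x',1,u')\mid(x',u')\in\hypo f\}$ of $\hypo f$, and likewise for $\epi$; consequently $\hypo f^p$ is convex precisely when $\hypo f$ is, and $\epi f^p$ is convex precisely when $\epi f$ is. Now take $f$ concave. Then $W:=\{(y,v)\in\varSpace\times\RR_{++}\mid f^p(y,v)>1\}$, a strict superlevel set of the concave $f^p$, is convex; and unwinding the supremum in $\radTransSup{f}(y)=\sup\{v>0\mid f^p(y,v)\le 1\}$ gives the identity $\epi\radTransSup{f}=\{(y,v)\in\varSpace\times\RR_{++}\mid \{y\}\times(v,\infty)\subseteq W\}$. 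I would then finish with the elementary lemma that for any convex $C\subseteq\varSpace\times\RR$ the set $\{(y,v)\mid\{y\}\times(v,\infty)\subseteq C\}$ is convex: given two of its points $(y_i,v_i)$, any target $v>(1-\lambda)v_0+\lambda v_1$ can be written as $(1-\lambda)v_0'+\lambda v_1'$ with $v_i':=v_i+(v-(1-\lambda)v_0-\lambda v_1)>v_i$, so convexity of $C$ places $((1-\lambda)y_0+\lambda y_1,v)$ in $C$. Hence $\epi\radTransSup{f}$ is convex, i.e.\ $\radTransSup{f}$ is convex. The case ``$f$ convex $\Rightarrow\radTransInf{f}$ concave'' is entirely parallel: $\epi f^p$ is convex, $W':=\{(y,v)\mid f^p(y,v)<1\}$ is convex, $\hypo\radTransInf{f}=\{(y,v)\in\varSpace\times\RR_{++}\mid\{y\}\times(0,v)\subseteq W'\}$, and the analogous fiber lemma (now realizing $v'\in(0,v)$ as $(1-\lambda)v_0'+\lambda v_1'$ with $v_i':=v'v_i/((1-\lambda)v_0+\lambda v_1)\in(0,v_i)$) yields convexity of $\hypo\radTransInf{f}$.

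The main obstacle is bookkeeping rather than depth. One must resist writing the naive pointwise convexity/concavity inequality---it genuinely fails at arguments where the transformed function takes the value $0$ or $\infty$---and instead verify each set identity, above all $\epi\radTransSup{f}=\{(y,v)\mid\{y\}\times(v,\infty)\subseteq W\}$, carefully from the definition of the upper transformation as a one-sided supremum, being attentive to whether the supremum is attained and to the convention that an empty supremum equals $0$; one must also check that the positive-cone description of $\hypo f^p$ correctly accounts for the $0$- and $\infty$-valued fibers of $f$.
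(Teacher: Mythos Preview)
Your proof is correct and, at its core, matches the paper's argument: both hinge on the concavity (convexity) of the perspective $f^p$ and derive convexity of $\epi\radTransSup{f}$ from the observation that $(y,v)\in\epi\radTransSup{f}$ forces $f^p(y,t)>1$ for every $t>v$. Your ``fiber lemma'' is exactly the paper's step ``since this holds for all $t>v$ and $t'>v'$,'' just repackaged as a set identity. Two minor organizational differences are worth noting. First, for the pair of cases where the relevant set ($\epi f$ or $\hypo f$) is itself convex, you appeal directly to Proposition~\ref{prop:convex-set-radial} and take a downward/upward closure, whereas the paper routes all four cases uniformly through $f^p$; your shortcut is clean and avoids the perspective entirely in those two cases. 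Second, you justify concavity of $f^p$ via the cone description $\hypo f^p=\{(vx',v,vu')\mid v>0,(x',u')\in\hypo f\}$, which is more careful about the extended values $0,\infty$ than the paper's citation to the standard perspective result; this is a genuine (if small) improvement in rigor given the $\extPos$-valued setting.
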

\begin{proof}
	Note that the perspective function $f^p(y,v)$ is concave (convex) whenever $f$ is concave (convex)~\cite{Boyd-ConvexOptimization}. 
	Supposing $f$ is concave. Consider any $(y,v),(y',v')\in\epi \radTransSup{f}$ and $0\leq\lambda\leq 1$. Note all $t>v$ and $t'>v'$ have $t\cdot f(y/t)> 1$ and $t'\cdot f(y'/t')> 1$. Then the concavity of $f^p$ implies 
	\begin{align*}
	(\lambda&t + (1-\lambda)t')\cdot f\left(\frac{\lambda y + (1-\lambda) y'}{\lambda t + (1-\lambda)t'}\right) > 1.
	\end{align*}
	Thus $\radTransSup{f}(\lambda y + (1-\lambda) y') \leq \lambda v + (1-\lambda)v'$.
	
	Next consider any $(y,v),(y',v')\in\epi \radTransInf{f}$ and $0\leq\lambda\leq 1$. Note there must exist $t,t'$ near $\radTransInf{f}(y),\radTransInf{f}(y')$ with $t\cdot f(y/t)\geq 1$ and $t'\cdot f(y'/t')\geq 1$. Then the concavity of $f^p$ implies 
	\begin{align*}
	(\lambda&t + (1-\lambda)t')\cdot f\left(\frac{\lambda y + (1-\lambda) y'}{\lambda t + (1-\lambda)t'}\right) \geq 1.
	\end{align*}
	Thus $\radTransInf{f}(\lambda y + (1-\lambda) y') \leq \lambda \radTransInf{f}(y) + (1-\lambda)\radTransInf{f}(y')\leq \lambda v + (1-\lambda)v'$. \qed
\end{proof}

Thus the family of upper radial concave functions is dual to the family of upper radial convex functions.
This is particularly interesting because these families of functions are very different due to the symmetry breaking nature of working with the extended positive reals. Proposition~\ref{prop:concave-radial-NS} shows that any concave function can be translated to become radial, whereas no similar operation exists for convex functions.
This is a critical algorithmic insight since it allows us to take generic concave maximization problems and transform them into minimization problems that are both convex and upper radial. In the second part of this work, we will see that radially dual minimization problems are very structured, often being globally uniformly Lipschitz continuous, despite us starting with a quite generic maximization problem.


\subsection{Radial Transformation of Quasi-concave/-convex Functions}\label{subsec:quasiconcave-convex-radial}
Lastly we consider the generalization of concavity and convexity given by quasiconcavity and quasiconvexity.
We say a function is {\it quasiconcave (quasiconvex)} if its superlevel sets $\{x\in\varSpace\mid f(x)\geq z\}$ (sublevel sets $\{x\in\varSpace\mid f(x)\leq z\}$) are convex for all $z>0$. Similar to the previous section's results, we find that quasiconcave functions are dual to quasiconvex functions (although the additional condition that $f^p(y,\cdot)$ is nondecreasing is needed).

\begin{proposition}\label{prop:quasiconcave-conversion}
	If $f$ is quasiconcave, $\radTransSup{f}$ is quasiconvex. If in addition $f^p(y,\cdot)$ is nondecreasing, $\radTransInf{f}$ is quasiconvex.
	Likewise, if $f$ is quasiconvex, $\radTransInf{f}$ is quasiconcave. If in addition $f^p(y,\cdot)$ is nondecreasing, $\radTransSup{f}$ is quasiconcave.
\end{proposition}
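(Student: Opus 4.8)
The plan is to compute each relevant level set of $\radTransSup{f}$ and $\radTransInf{f}$ explicitly as an intersection or union of positively rescaled level sets of $f$, and then to read off convexity of those level sets from the quasiconcavity or quasiconvexity of $f$. Using $\radTransSup{f}(y)=\sup\{v>0\mid v\cdot f(y/v)\le 1\}$ and $\radTransInf{f}(y)=\inf\{v>0\mid v\cdot f(y/v)\ge 1\}$, together with the elementary equivalence $v\cdot f(y/v)\le 1\iff y\in v\cdot\{x\mid f(x)\le 1/v\}$ (and similarly with the reversed inequality), a direct unwinding of the $\sup$ and $\inf$ gives, for every $z>0$,
\begin{align*}
\{y\mid \radTransSup{f}(y)\le z\} &= \bigcap_{v>z} v\cdot\{x\mid f(x)>1/v\},\\
\{y\mid \radTransInf{f}(y)\ge z\} &= \bigcap_{0<v<z} v\cdot\{x\mid f(x)<1/v\},\\
\{y\mid \radTransInf{f}(y)<z\} &= \bigcup_{0<v<z} v\cdot\{x\mid f(x)\ge 1/v\},\\
\{y\mid \radTransSup{f}(y)>z\} &= \bigcup_{v>z} v\cdot\{x\mid f(x)\le 1/v\}.
\end{align*}
The first identity will yield quasiconvexity of $\radTransSup{f}$ from quasiconcavity of $f$, the second quasiconcavity of $\radTransInf{f}$ from quasiconvexity of $f$, and the third and fourth the two remaining claims.

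The two intersection identities need no extra hypothesis. If $f$ is quasiconcave then every strict superlevel set $\{x\mid f(x)>1/v\}$ is convex, so each $v\cdot\{x\mid f(x)>1/v\}$ is convex, hence so is the intersection defining $\{y\mid\radTransSup{f}(y)\le z\}$; as this holds for all $z>0$, $\radTransSup{f}$ is quasiconvex. Dually, if $f$ is quasiconvex then each strict sublevel set $\{x\mid f(x)<1/v\}$ is convex, so the intersection defining $\{y\mid\radTransInf{f}(y)\ge z\}$ is convex for all $z>0$ and $\radTransInf{f}$ is quasiconcave. Here I use the standard fact that a function is quasiconvex iff all its strict sublevel sets are convex and quasiconcave iff all its strict superlevel sets are convex, so the strict/non-strict bookkeeping costs nothing.

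The two union identities are where a union of convex sets must be shown convex, and this is precisely what the hypothesis that $f^p(y,\cdot)$ is nondecreasing provides. In the third identity each $v\cdot\{x\mid f(x)\ge 1/v\}$ is convex when $f$ is quasiconcave; and if $0<v_1\le v_2$ and $y\in v_1\cdot\{x\mid f(x)\ge 1/v_1\}$, then $f^p(y,v_1)=v_1 f(y/v_1)\ge 1$, so by monotonicity $f^p(y,v_2)\ge f^p(y,v_1)\ge 1$, i.e.\ $y\in v_2\cdot\{x\mid f(x)\ge 1/v_2\}$. Thus the family $\{v\cdot\{x\mid f(x)\ge 1/v\}\}_{v>0}$ is totally ordered by inclusion, so the union defining $\{y\mid\radTransInf{f}(y)<z\}$ is a chain of convex sets and therefore convex; this gives quasiconvexity of $\radTransInf{f}$. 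The fourth identity is handled identically with inequalities reversed — the relevant family $\{v\cdot\{x\mid f(x)\le 1/v\}\}_{v>0}$ is then decreasing in $v$ but still a chain — yielding quasiconcavity of $\radTransSup{f}$ under the same hypothesis.

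The only genuine obstacle is the one just flagged: without the nondecreasing assumption the union of convex rescaled level sets need not be convex, and monotonicity of the perspective — equivalently, by Lemma~\ref{lem:ray-def-of-radial-functions}, star-shapedness of $\hypo f$ about the origin — is exactly what collapses that union into a nested chain. Everything else is routine: deriving the four set identities from the definitions, using that positive scalings preserve convexity, and invoking the strict/non-strict equivalence for quasiconvexity. The "lower radial" halves of the statement require no separate argument, being the $\le\leftrightarrow\ge$ mirror images of the cases proven above.
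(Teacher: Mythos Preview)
Your proof is correct. The core idea coincides with the paper's: fix a level $z$, and at each common scale $\gamma>z$ use quasiconcavity of $f$ to compare $f(y/\gamma)$, $f(y'/\gamma)$, and $f((\lambda y+(1-\lambda)y')/\gamma)$. The paper carries this out as a direct two-point verification, whereas you repackage the same computation as the set identity $\{y\mid \radTransSup{f}(y)\le z\}=\bigcap_{v>z} v\cdot\{x\mid f(x)>1/v\}$ and then invoke closure of convexity under intersection. For the lower transform under the nondecreasing hypothesis, the paper uses monotonicity of $f^p(y,\cdot)$ to push the inequality $v f(y/v)\ge 1$ from some $v$ near $\radTransInf{f}(y)$ up to all $\gamma>z$, while you use the same monotonicity to show that the family $\{v\cdot\{x\mid f(x)\ge 1/v\}\}_{v}$ is a chain, so that its union is convex. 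Your presentation makes particularly transparent \emph{why} the extra hypothesis is needed only for the union cases; the paper's version is shorter but leaves this role slightly implicit.
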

\begin{proof}
	Suppose $f$ is quasiconcave and fix any level $z\in\RR_{++}$. Consider any $0\leq\lambda\leq 1$ and $y,y'\in\varSpace$ with $\radTransSup{f}(y)\leq z$ and $\radTransSup{f}(y')\leq z$. First, we consider the upper radial transformation.
	Note all $\gamma> z$ have $\gamma\cdot f(y/\gamma)> 1$ and $\gamma\cdot f(y'/\gamma)> 1$. Then the quasiconcavity of $f$ implies 
	\begin{align*}
	\gamma\cdot f\left(\frac{\lambda y + (1-\lambda) y'}{\gamma}\right) & \geq \gamma\cdot \min\left\{f\left(y/\gamma\right), f\left(y'/\gamma\right)\right\}\\
	& = \min\left\{\gamma\cdot f\left(y/\gamma\right), \gamma\cdot f\left(y'/\gamma\right)\right\} > 1.
	\end{align*}
	Thus $\radTransSup{f}(\lambda x + (1-\lambda) y) \leq z$ since this holds for every $\gamma>z$.
	
	Now we consider the lower radial transformation and further assume $f^p(y,\cdot)$ is nondecreasing. Note all $\gamma>z$ must have $\gamma\cdot f(y/\gamma)\geq 1$ and $\gamma\cdot f(y'/\gamma)\geq 1$. Then the quasiconvexity of $f$ implies 
	\begin{align*}
	\gamma\cdot f\left(\frac{\lambda y + (1-\lambda) y'}{\gamma}\right) & \geq \gamma\cdot \min\left\{f\left(y/\gamma\right), f\left(y'/\gamma\right)\right\}\\
	& = \min\left\{\gamma\cdot f\left(y/\gamma\right), \gamma\cdot f\left(y'/\gamma\right)\right\} \geq 1.
	\end{align*}
	Thus $\radTransInf{f}(\lambda y + (1-\lambda) y') \leq z$ since this holds for every $\gamma>z$. \qed
\end{proof}

\subsection{Examples and Pictures}\label{subsec:radial-function-examples}
In Figures~\ref{fig:func-p1} through~\ref{fig:func-ul5}, we give a number of examples of radial function transformations. As done in our illustrations of the radial set transformation, each figure includes the horizontal line $\{(x,1)\mid x\in\RR\}$ for reference.

Figures~\ref{fig:func-p1},~\ref{fig:func-u1}, and~\ref{fig:func-l1} show the absolute value function and its transformations of
$$\radTransSup{|\cdot|}(y) = \begin{cases} +\infty & \text{if } |y|\leq 1\\ 0 &\text{otherwise,}\end{cases} \quad \mathrm{and} \quad \radTransInf{|\cdot|}(y) = \begin{cases} +\infty & \text{if } |y|< 1\\ 0 &\text{otherwise}\end{cases} \ .$$
Note $|x|$ is both upper and lower radial, but not strictly. As a result, although $\radTransSup{|\cdot|}$ and $\radTransInf{|\cdot|}$ are not equal, both are dual to $|x|$ under their respective transformations.

Figures~\ref{fig:func-p2} and~\ref{fig:func-ul2} show the strictly radial, concave function $\sqrt{1-x^2}$ (with the value at all $x^2>1$ set to $0$) and its transformation $\sqrt{1+y^2}$. Notice the transformed function is convex as guaranteed by Proposition~\ref{prop:concave-conversion}.

Figures~\ref{fig:func-p3} and~\ref{fig:func-ul3} show the strictly radial function $e^{-|x|}+1/2$ and its transformation. This function is not concave but is quasiconcave and so, as guaranteed by Propositions~\ref{prop:concave-conversion} and~\ref{prop:quasiconcave-conversion}, its radial transformation is not convex but is quasiconvex.

Lastly, Figures~\ref{fig:func-p4},~\ref{fig:func-u4}, and~\ref{fig:func-ul5} continue the example of transforming the quadratic $(x+1)^2+1/2$ which was used in Figures~\ref{fig:set-p4} and~\ref{fig:set-d4} to illustrate the radial set transformation. Notice that this quadratic is not upper radial as its epigraph transforms into an ellipsoid-like shape rather than the hypograph of another function. Hence its upper radial transformation is not dual to the original quadratic. Figure~\ref{fig:func-ul5} shows the result of applying the upper radial transformation to this quadratic twice. In this case, the functions in Figures~\ref{fig:func-u4} and~\ref{fig:func-ul5} are upper radial and thus radially dual. There $h^{\Gamma\Gamma}$ could be viewed as a ``radial envelope'' or ``star closure'' of $h$. Exploration of this property may be of interest to future work.
\begin{figure}
	\centering
	\begin{minipage}{.30\textwidth}
		\centering
		\includegraphics[width=.98\linewidth]{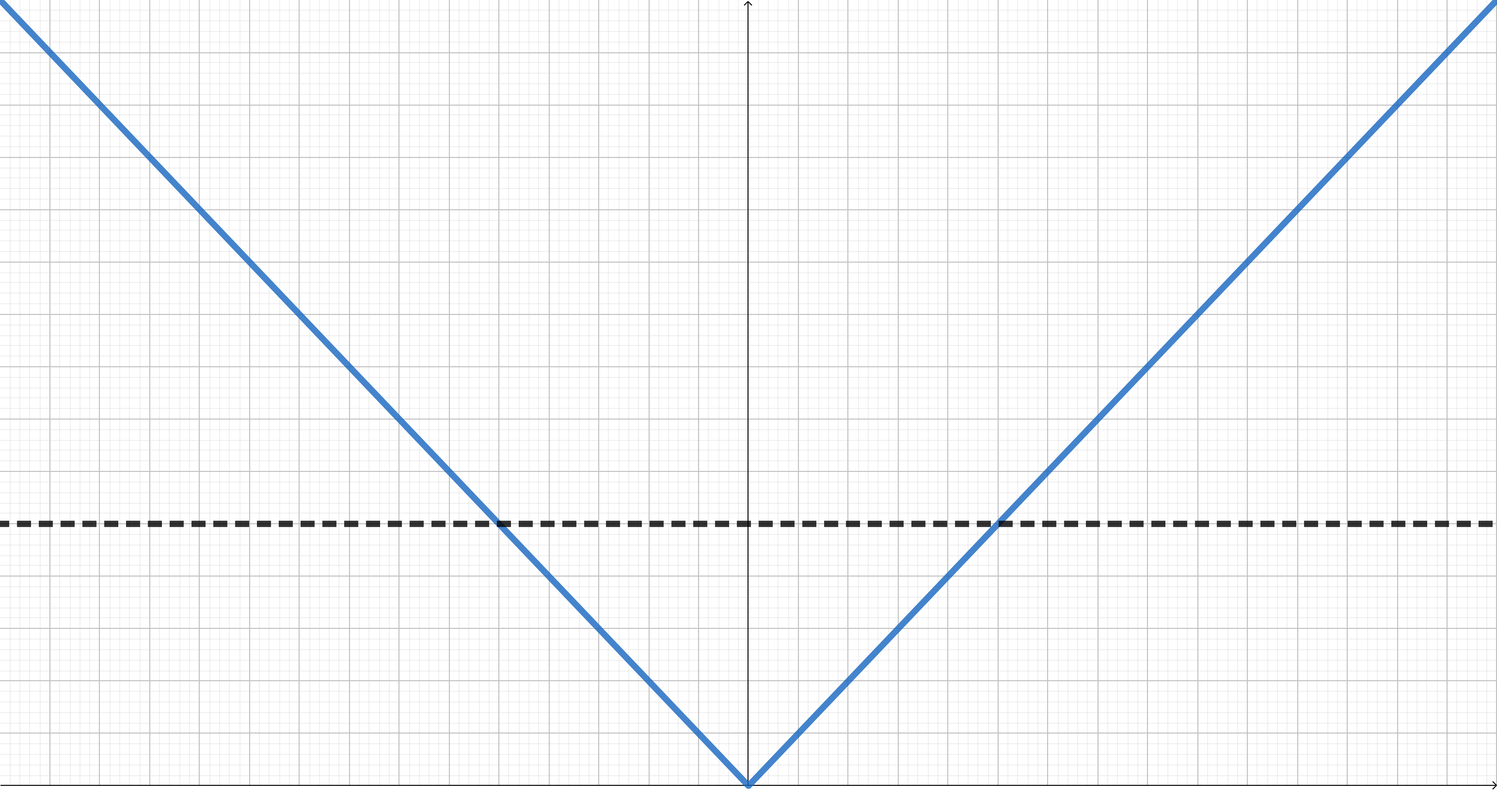}
		\captionof{figure}{$|x|$}
		\label{fig:func-p1}
	\end{minipage}%
	$\iff$
	\begin{minipage}{.30\textwidth}
		\centering
		\includegraphics[width=.98\linewidth]{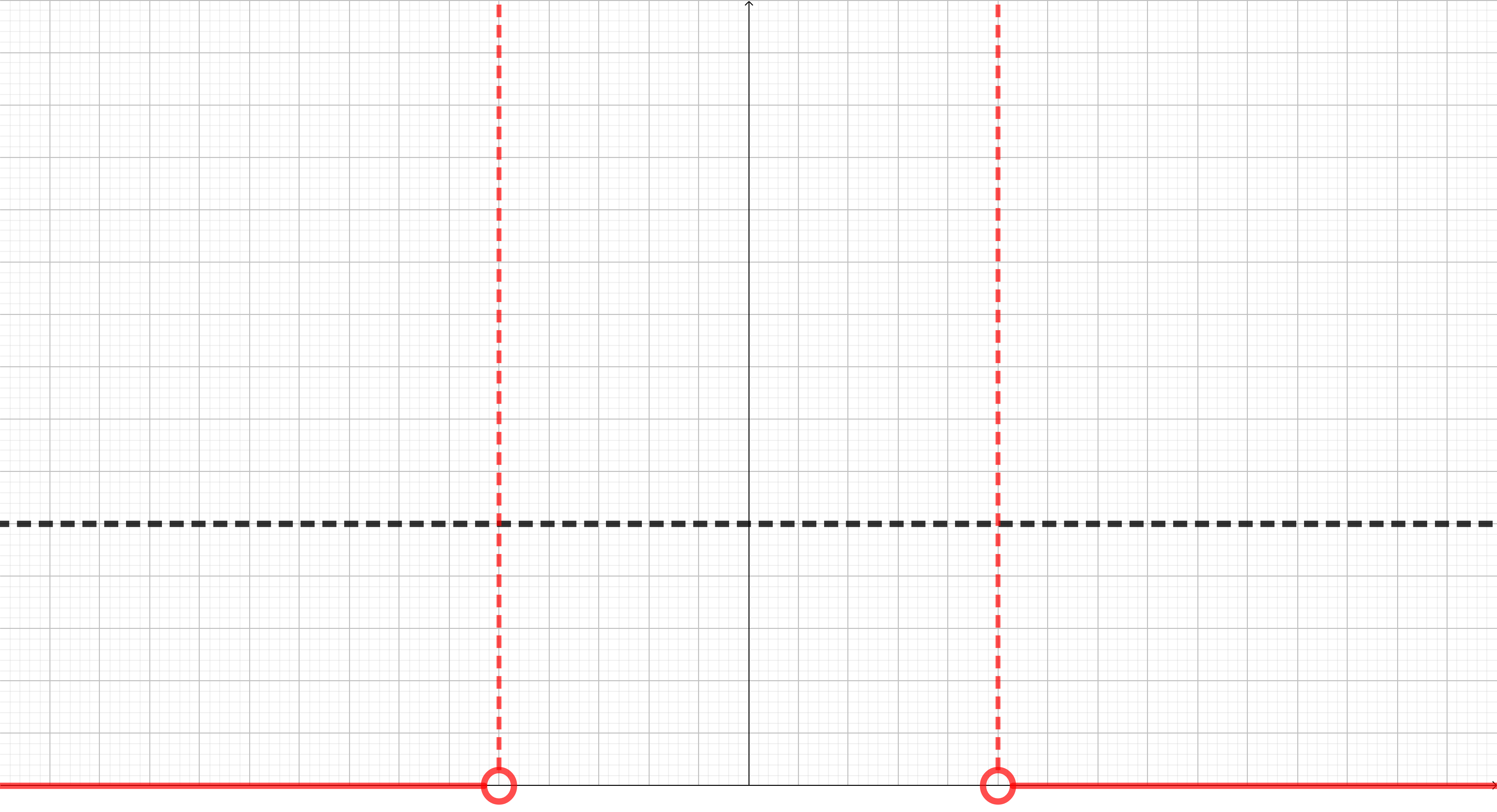}
		\captionof{figure}{$\radTransSup{|\cdot|}(y)$}
		\label{fig:func-u1}
	\end{minipage}
	\begin{minipage}{.30\textwidth}
		\centering
		\includegraphics[width=.98\linewidth]{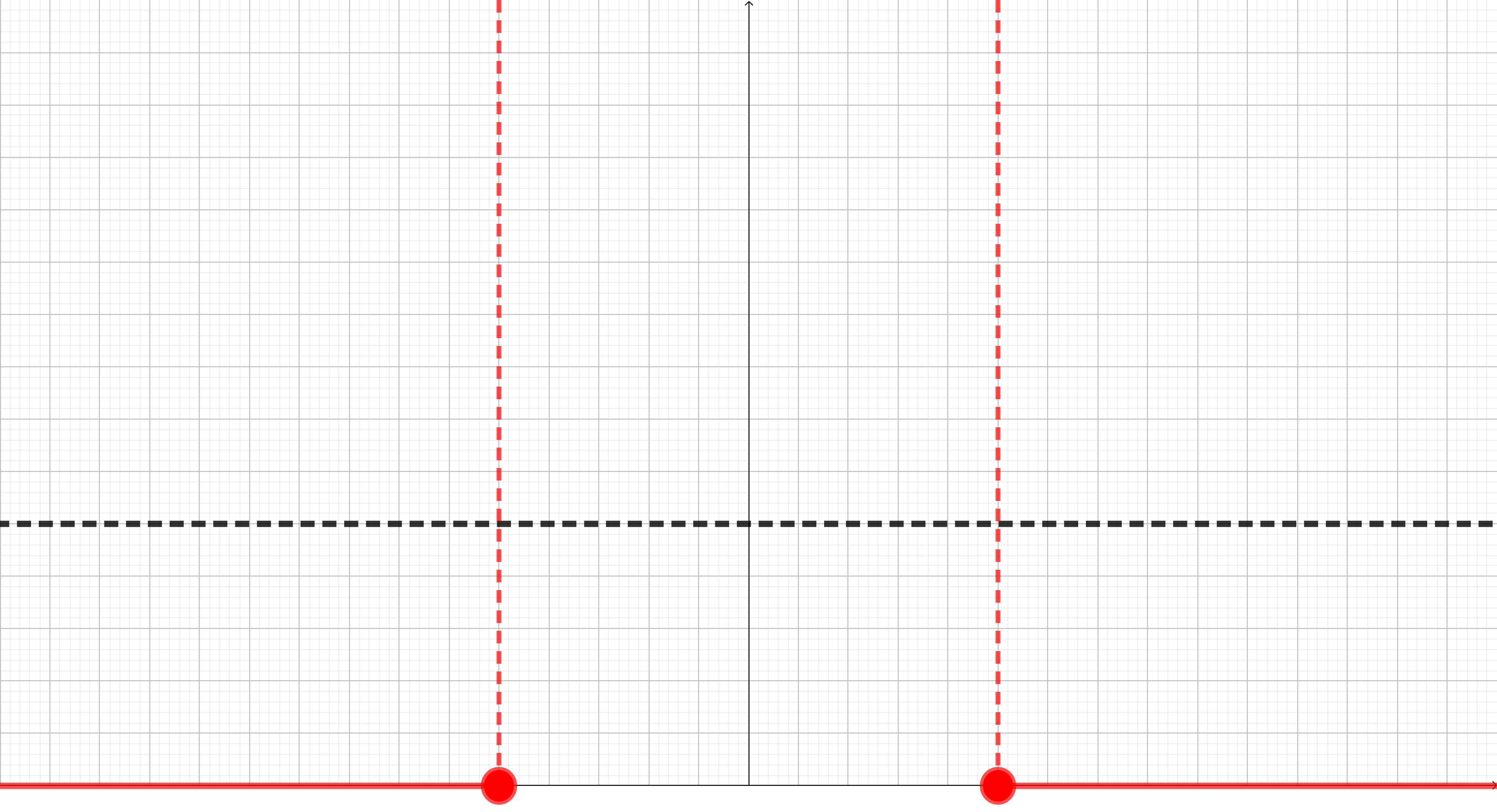}
		\captionof{figure}{$\radTransInf{|\cdot|}(y)$}
		\label{fig:func-l1}
	\end{minipage}
	
	\begin{minipage}{.4\textwidth}
		\centering
		\includegraphics[width=.98\linewidth]{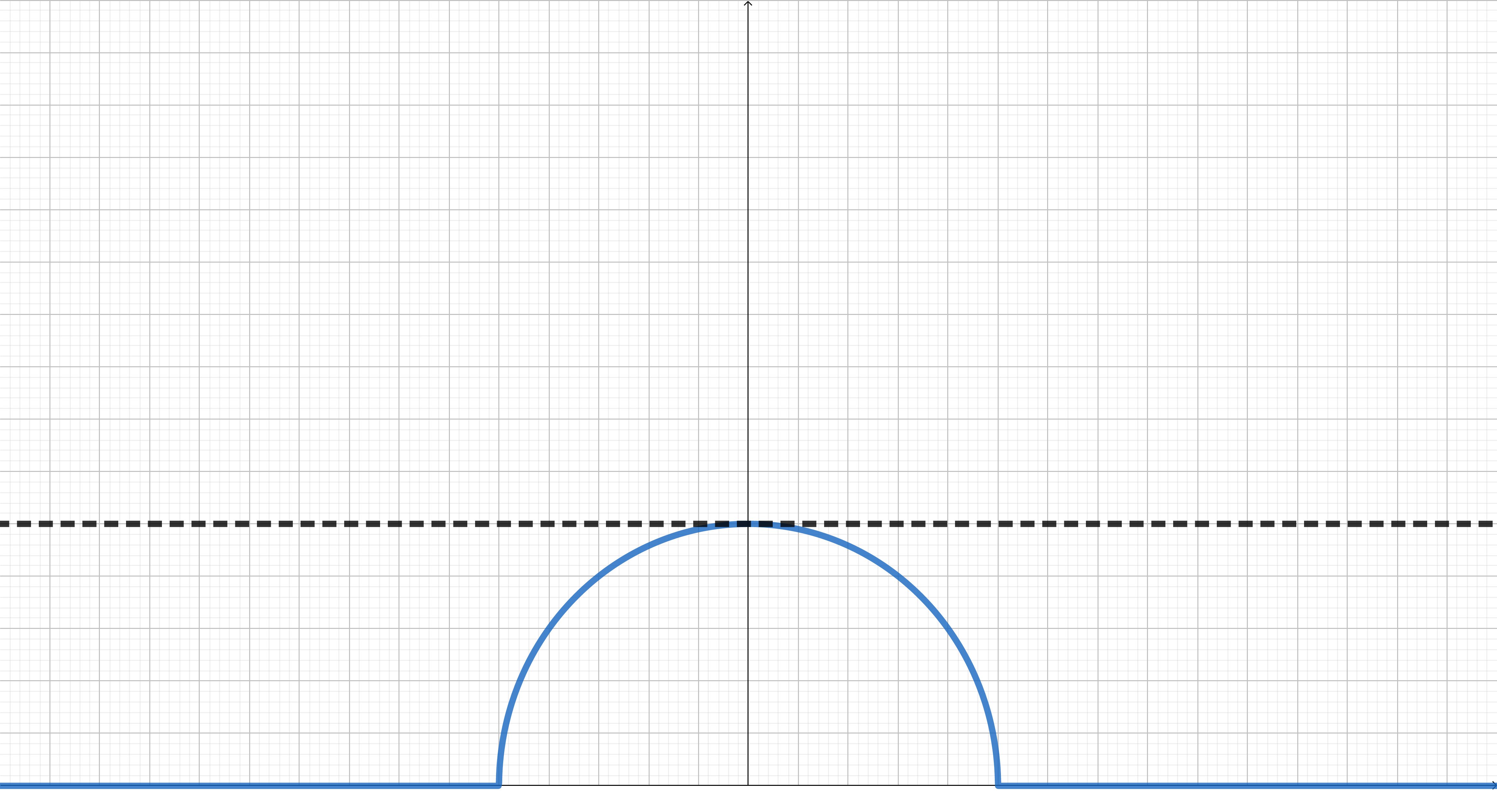}
		\captionof{figure}{$f(x)=\sqrt{1-x^2}$}
		\label{fig:func-p2}
	\end{minipage}%
	$\iff$
	\begin{minipage}{.4\textwidth}
		\centering
		\includegraphics[width=.98\linewidth]{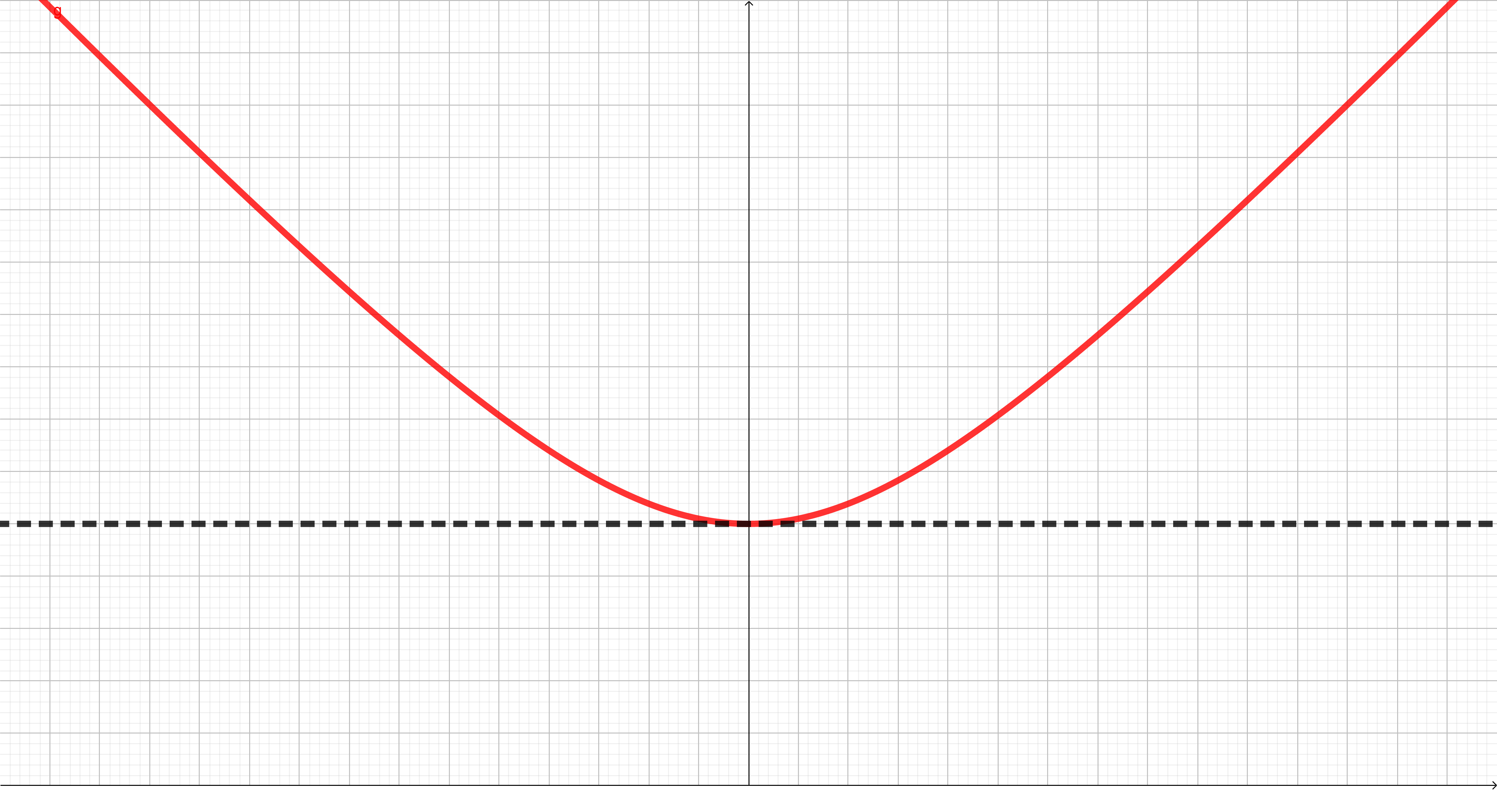}
		\captionof{figure}{$\radTransSup{f}(y)=\radTransInf{f}(y)=\sqrt{1+y^2}$}
		\label{fig:func-ul2}
	\end{minipage}
	
	\begin{minipage}{.4\textwidth}
		\centering
		\includegraphics[width=.98\linewidth]{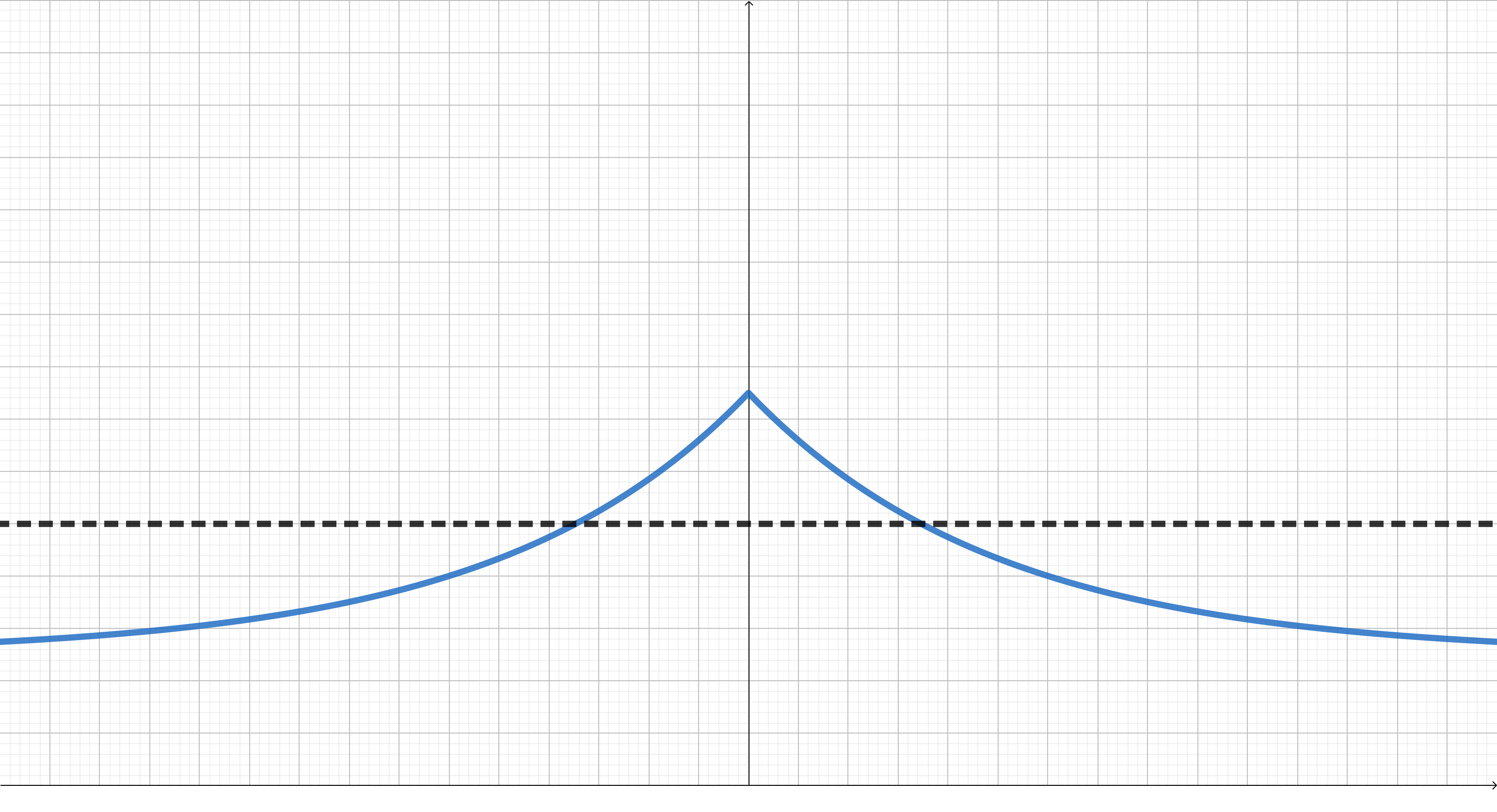}
		\captionof{figure}{$g(x)=e^{-|x|}+1/2$}
		\label{fig:func-p3}
	\end{minipage}%
	$\iff$
	\begin{minipage}{.4\textwidth}
		\centering
		\includegraphics[width=.98\linewidth]{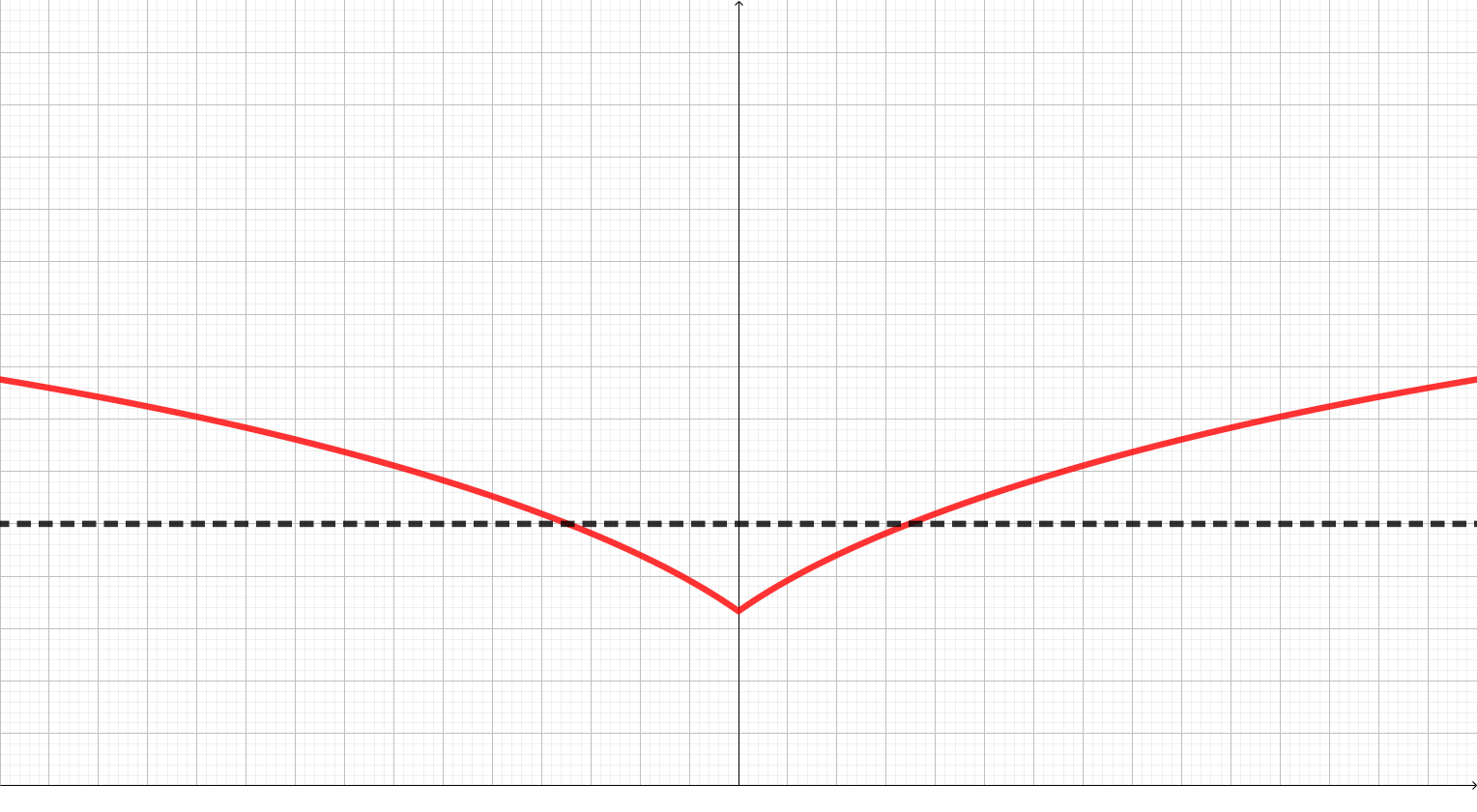}
		\captionof{figure}{$\radTransSup{g}(y)=\radTransInf{g}(y)$}
		\label{fig:func-ul3}
	\end{minipage}
	
	\begin{minipage}{.4\textwidth}
		\centering
		\includegraphics[width=.98\linewidth]{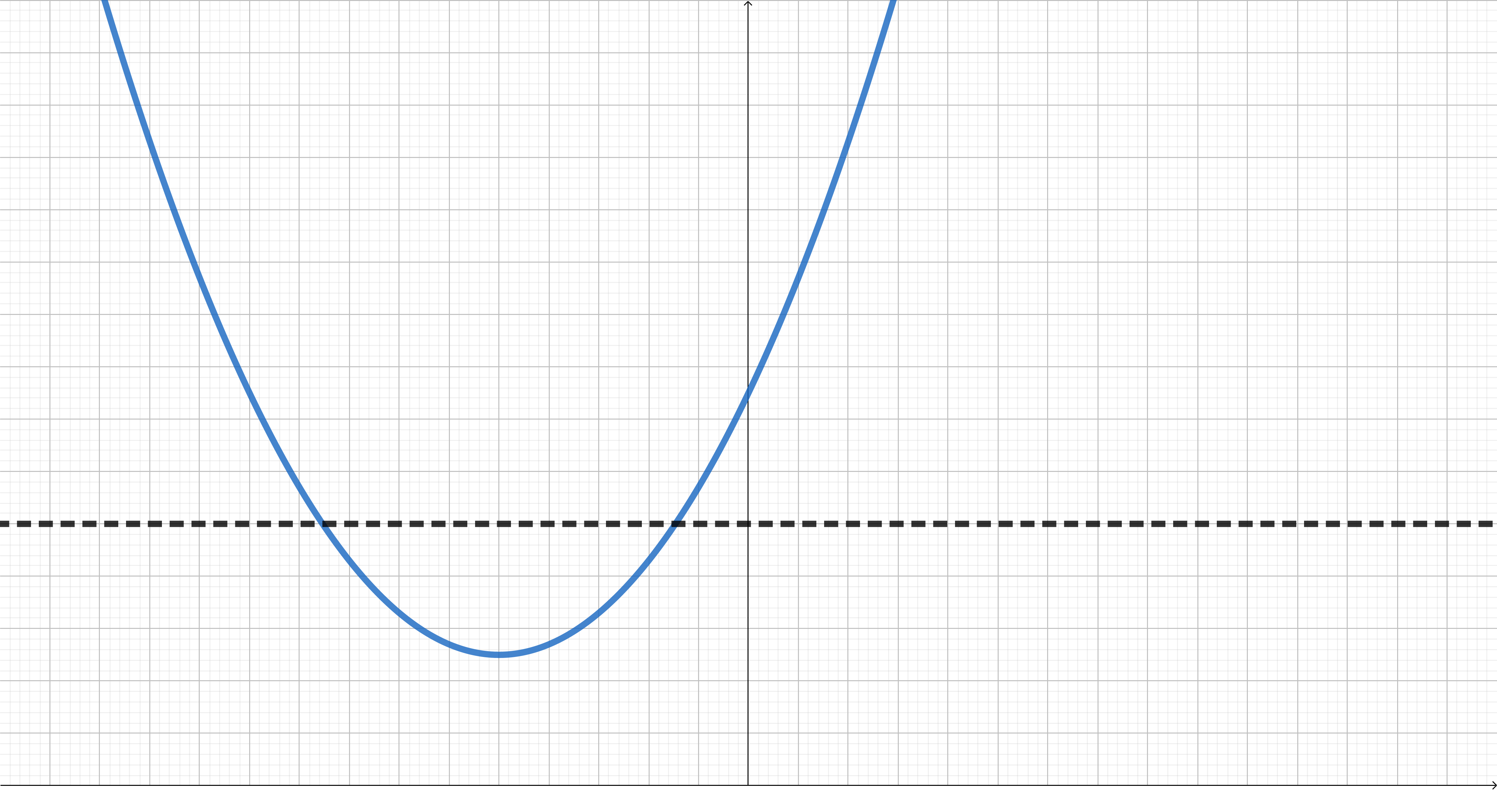}
		\captionof{figure}{$h(x)=(x+1)^2+1/2$}
		\label{fig:func-p4}
	\end{minipage}%
	$\implies$
	\begin{minipage}{.4\textwidth}
		\centering
		\includegraphics[width=.98\linewidth]{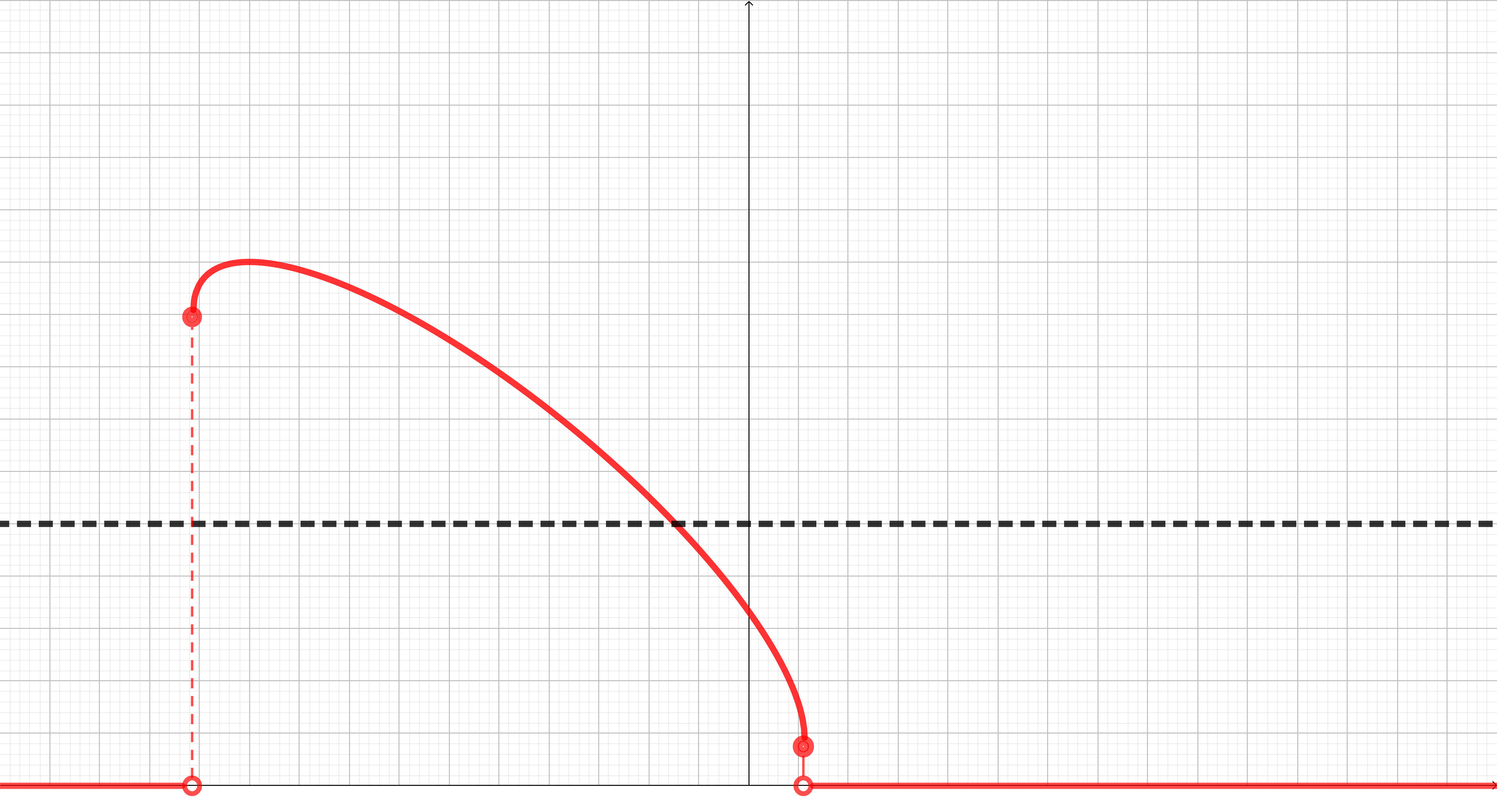}
		\captionof{figure}{$\radTransSup{h}(y)$}
		\label{fig:func-u4}
	\end{minipage}
	\begin{minipage}{.46\textwidth}
		\hfill $\iff$
	\end{minipage}%
	\begin{minipage}{.4\textwidth}
		\centering
		\includegraphics[width=.98\linewidth]{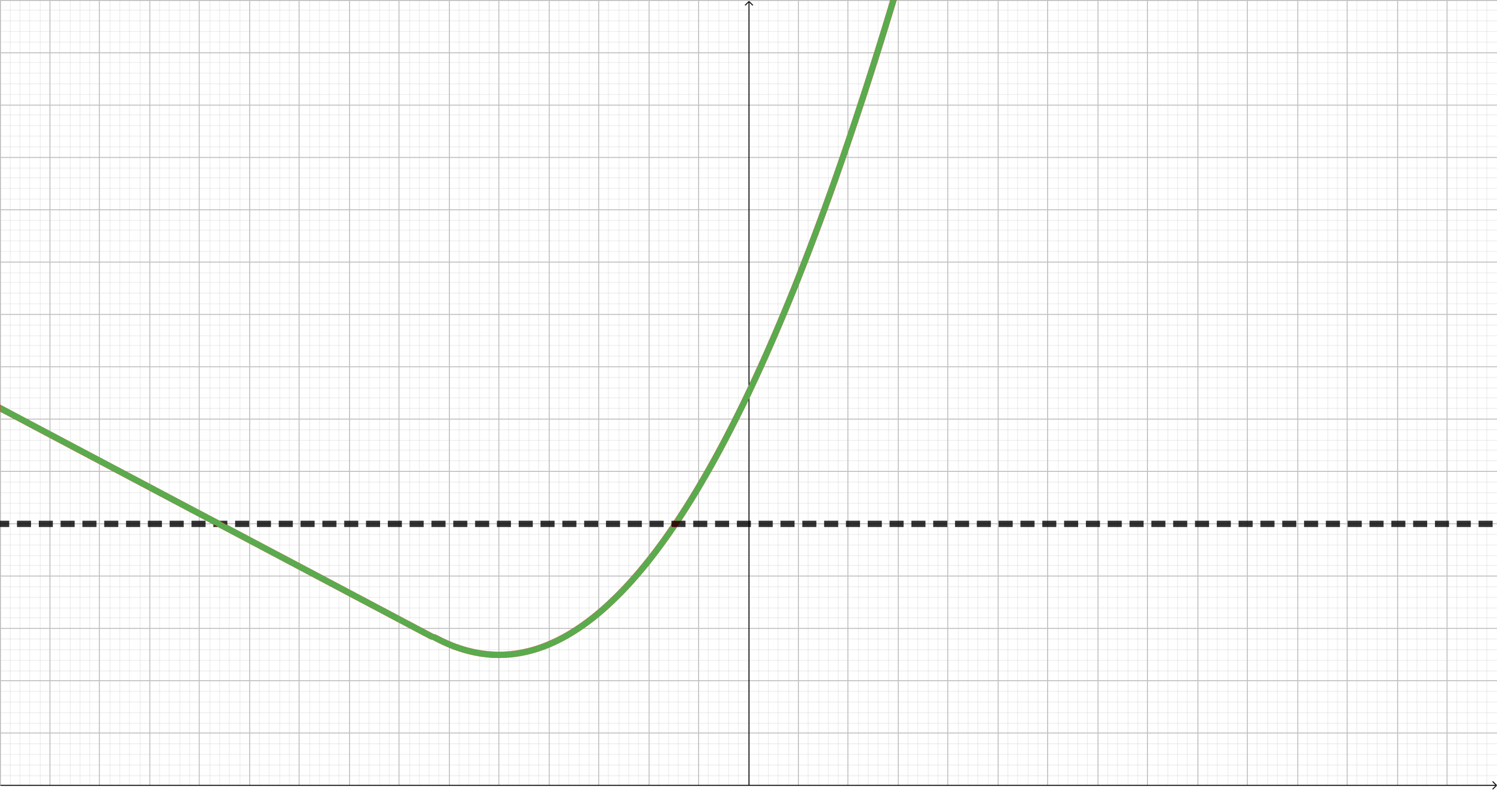}
		\captionof{figure}{$\biradTransSup{h}(x)$}
		\label{fig:func-ul5}
	\end{minipage}
\end{figure}

\section{Optimization Based on Radial Transformations} \label{sec:optimization}
Here we develop the necessary machinery to propose and analyze optimization methods based on the radial transformation. For any appropriately radial function, formulas for the convex and proximal subdifferentials and supdifferentials of its radial transformation are given in Section~\ref{subsec:subdifferentials}. Further, assuming $f$ is sufficiently differentiable, Section~\ref{subsec:gradients} characterizes the gradients and Hessians of its radial transformations.
In Section~\ref{subsec:optimal}, we relate the optimal points (minimizers and maximizers) and stationary points of a function and its radial transformations. These calculus and optimality relations form the foundations of relating the pair of radially dual optimization problems~\eqref{eq:base-problem} and~\eqref{eq:radial-problem}.

\subsection{Convex and Proximal Subgradients and Supgradients}\label{subsec:subdifferentials}
To understand the convex and proximal subdifferentials under radial function transformations, we leverage Propositions~\ref{prop:convex-normals} and~\ref{prop:proximal-normals} which described normal vectors under the radial set transformation.
The following lemma relates the epigraph and hypograph of a radially transformed function to those of $f$. 
\begin{lemma}\label{lem:hypo-epi-inclusion}
	For any upper radial $f$, $ \epi\radTransSup{f} \subseteq \Gamma(\hypo f)$.\\
	Likewise, for any lower radial $f$, $\hypo\radTransInf{f} \subseteq \Gamma(\epi f)$.\\
	If $f^p(y,\cdot)$ is strictly increasing on its domain, equality holds in both cases.
\end{lemma}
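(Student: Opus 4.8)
The plan is to unfold the definitions of the transformed functions and compare them pointwise to the image of the original hypograph (resp.\ epigraph) under $\Gamma$. First I would establish the inclusion $\epi \radTransSup{f} \subseteq \Gamma(\hypo f)$. Take any $(y,v) \in \epi \radTransSup{f}$, so $v \geq \radTransSup{f}(y) = \sup\{w > 0 \mid w\cdot f(y/w) \leq 1\}$. Applying the point transformation gives $\Gamma(y,v) = (y/v, 1/v) =: (x,u)$, and I want to show $(x,u) \in \hypo f$, i.e.\ $f(x) \geq u$, that is $f(y/v) \geq 1/v$, equivalently $v \cdot f(y/v) \geq 1$. Now $v \geq \radTransSup{f}(y)$ means $v$ is at least the supremum of all $w$ with $w \cdot f(y/w) \leq 1$. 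Since $f$ is upper radial, $f^p(y,\cdot) = w \mapsto w\cdot f(y/w)$ is nondecreasing, so the set $\{w > 0 \mid w\cdot f(y/w) \leq 1\}$ is a down-set (an interval $(0,\lambda_0]$ or $(0,\lambda_0)$ for $\lambda_0 = \radTransSup{f}(y)$). For $v$ strictly greater than $\lambda_0$ we get $v\cdot f(y/v) > 1 \geq 1$ immediately; the delicate case is $v = \lambda_0$, where I would use the upper semicontinuity of $f^p(y,\cdot)$ to rule out the possibility $\lambda_0 \cdot f(y/\lambda_0) < 1$: if it were $<1$, upper semicontinuity would let us push $w$ slightly past $\lambda_0$ while keeping $w\cdot f(y/w) \leq 1$, contradicting that $\lambda_0$ is the supremum. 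Hence $v\cdot f(y/v) \geq 1$ in all cases, so $(x,u) \in \hypo f$ and $(y,v) = \Gamma(x,u) \in \Gamma(\hypo f)$.

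For the reverse inclusion under the strict hypothesis, I would argue that strict monotonicity of $f^p(y,\cdot)$ forces the threshold $\lambda_0 = \radTransSup{f}(y)$ to satisfy $\lambda_0 \cdot f(y/\lambda_0) = 1$ exactly (not merely $\geq 1$ or $\leq 1$ on a fat interval), so that the sublevel structure of $f^p(y,\cdot)$ with respect to the value $1$ is a single point. Concretely, take $(x,u) \in \hypo f$; then $\Gamma(x,u) = (x/u, 1/u) =: (y,v)$ with $v \cdot f(y/v) = f(x)/u \cdot \,\cdot\,$... more carefully, $v\cdot f(y/v) = (1/u) f(x) \geq 1$ since $f(x) \geq u$. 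I then need $v \geq \radTransSup{f}(y)$, i.e.\ $v \geq \sup\{w > 0 \mid w\cdot f(y/w) \leq 1\}$. Any $w$ in that sup-set has $w\cdot f(y/w) \leq 1 \leq v \cdot f(y/v)$; since $f^p(y,\cdot)$ is strictly increasing on its domain, $w\cdot f(y/w) \leq v\cdot f(y/v)$ with the left side $\leq 1$ and the right side $\geq 1$ yields $w \leq v$ (using that the common value $1$ is attained on a domain point, and strict monotonicity prevents $w > v$). Taking the supremum over such $w$ gives $v \geq \radTransSup{f}(y)$, hence $(y,v) \in \epi \radTransSup{f}$, establishing $\Gamma(\hypo f) \subseteq \epi \radTransSup{f}$ and thus equality.

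The lower-radial statement $\hypo \radTransInf{f} \subseteq \Gamma(\epi f)$ follows by the mirrored argument: unfold $\radTransInf{f}(y) = \inf\{v > 0 \mid v\cdot f(y/v) \geq 1\}$, use Lemma~\ref{lem:ray-def-of-radial-functions}(ii) together with lower semicontinuity of $f^p(y,\cdot)$ in place of upper semicontinuity, and note the infimum is attained in the closed case. Per the paper's stated convention, I would only sketch this half. I would also invoke Lemma~\ref{lem:ray-def-of-radial-functions} explicitly where I use that the relevant ray-slices of the epigraph/hypograph are down-sets/up-sets, rather than re-deriving that monotonicity of $f^p$.

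\textbf{Main obstacle.} The subtlety is entirely at the boundary value $w = \lambda_0 = \radTransSup{f}(y)$: showing that equality (rather than strict inequality) holds in $\lambda_0 \cdot f(y/\lambda_0) \geq 1$ is where the semicontinuity hypothesis is genuinely used, and for the equality-of-sets claim one must see that strict monotonicity collapses the $1$-level of $f^p(y,\cdot)$ to a single point so that the supremum defining $\radTransSup{f}$ is exactly realized by the pre-image of $(y,v)$ under $\Gamma$. Everything else is bookkeeping with the explicit form $\Gamma(x,u) = (x,1)/u$ and the identity $v\cdot f(y/v) = f^p(y,v)$; I would be careful to also handle the degenerate cases $\radTransSup{f}(y) \in \{0, \infty\}$ (where the inclusion holds vacuously or because the fiber over $y$ is empty/all of $\RR_{++}$), matching the footnote convention that the supremum over an empty set is $0$.
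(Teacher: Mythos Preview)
Your proposal is correct and follows essentially the same route as the paper's proof: both compute $\Gamma(\hypo f)=\{(y,v)\mid v\cdot f(y/v)\geq 1\}$ directly from the definition and then compare this to $\epi\radTransSup{f}$ via the implication $\radTransSup{f}(y)\leq v \Rightarrow v\cdot f(y/v)\geq 1$ (an equivalence in the strictly radial case). The paper simply asserts this implication ``for upper radial $f$'' and moves on, whereas you spell out the boundary case $v=\radTransSup{f}(y)$ using upper semicontinuity of $f^p(y,\cdot)$; that extra care is exactly what justifies the paper's one-line claim.
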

\begin{proof}
	Noting that $\radTransSup{f}(y)\leq v \implies v\cdot f(y/v)\geq 1$ for upper radial $f$, 
	\begin{align*}
	\Gamma(\hypo f) &= \left\{\frac{(x,1)}{u} \mid f(x)\geq u\right\}\\
	&= \left\{(y,v) \mid f(y/v)\geq 1/v\right\}\\
	&= \left\{(y,v) \mid v\cdot f(y/v)\geq 1\right\}\\
	&\supseteq \left\{(y,v) \mid f^\Gamma(y)\geq v\right\} = \epi\radTransSup{f}.
	\end{align*}
	Equality holds when $f$ is strictly upper radial as $\radTransSup{f}(y)\leq v \iff v\cdot f(y/v)\geq 1$. \qed
\end{proof}

In light of Lemma~\ref{lem:hypo-epi-inclusion}, we can immediately apply our results on normal vectors under the radial set transformation to understand differentials under the function transformation. The following pair of propositions do this for the convex and proximal subdifferential and supdifferential. A very similar result restricted to convex functions was previously given by~\cite[Corollary 3.4]{Artstein2017}. It is natural that our formulas match theirs as nonconvexities have little effect on local objects like (sub)gradients.
\begin{proposition} \label{prop:convex-subgradients}
	For any strictly upper radial $f$,
	$$ \partial_C\radTransSup{f}(y) = \left\{\frac{\zeta}{(\zeta,\delta)^T(x,u)} \mid \begin{bmatrix} \zeta \\ \delta\end{bmatrix}\in N^C_{\hypo f}((x,u)),\ (\zeta,\delta)^T(x,u)>0\right\}$$
	where $(x,u) = \Gamma(y,\radTransSup{f}(y))$. Likewise, for any strictly lower radial $f$,
	$$ \partial^C\radTransInf{f}(y) = \left\{\frac{\zeta}{(\zeta,\delta)^T(x,u)} \mid \begin{bmatrix} \zeta \\ \delta\end{bmatrix}\in N^C_{\epi f}((x,u)),\ (\zeta,\delta)^T(x,u)<0\right\}.$$
\end{proposition}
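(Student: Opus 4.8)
The plan is to reduce the statement about the convex subdifferential of $\radTransSup{f}$ directly to Proposition~\ref{prop:convex-normals} (the formula for convex normal vectors under the radial set transformation), using Lemma~\ref{lem:hypo-epi-inclusion} to pass between the epigraph of $\radTransSup{f}$ and the radially transformed hypograph of $f$. By definition, $\zeta \in \partial_C\radTransSup{f}(y)$ means $(\zeta,-1)\in N^C_{\epi\radTransSup{f}}((y,v))$ where $v=\radTransSup{f}(y)$. Since $f$ is strictly upper radial, Lemma~\ref{lem:hypo-epi-inclusion} gives the exact equality $\epi\radTransSup{f} = \Gamma(\hypo f)$, so we are really asking: what are the convex normals of $\Gamma(\hypo f)$ at $(y,v)$? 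Proposition~\ref{prop:convex-normals} answers this precisely: $N^C_{\Gamma(\hypo f)}((y,v))$ consists exactly of the vectors $(\zeta',\,-(\zeta',\delta')^T(x,u))$ as $(\zeta',\delta')$ ranges over $N^C_{\hypo f}((x,u))$, where $(x,u)=\Gamma(y,v)$.

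First I would write $(x,u) = \Gamma(y,\radTransSup{f}(y))$ and invoke $\epi\radTransSup{f} = \Gamma(\hypo f)$ from Lemma~\ref{lem:hypo-epi-inclusion}. Then $\zeta\in\partial_C\radTransSup{f}(y)$ iff $(\zeta,-1)\in N^C_{\Gamma(\hypo f)}((y,v))$, which by Proposition~\ref{prop:convex-normals} holds iff there is some $(\zeta',\delta')\in N^C_{\hypo f}((x,u))$ with $\zeta' = \zeta$ and $-(\zeta',\delta')^T(x,u) = -1$, i.e. $(\zeta',\delta')^T(x,u)=1$. So membership in $\partial_C\radTransSup{f}(y)$ amounts to the existence of $(\zeta',\delta')\in N^C_{\hypo f}((x,u))$ whose inner product with $(x,u)$ equals $1$ and whose first coordinate is $\zeta$.

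The last step is to rewrite this as the claimed set. The point is that $N^C_{\hypo f}$ is a cone: if $(\zeta',\delta')\in N^C_{\hypo f}((x,u))$ with $c := (\zeta',\delta')^T(x,u) > 0$, then rescaling by $1/c$ produces an element of the cone with inner product exactly $1$ and first coordinate $\zeta'/c$. Conversely any element with inner product exactly $1$ is already of this form with $c=1$. This shows
$$\partial_C\radTransSup{f}(y) = \left\{\frac{\zeta}{(\zeta,\delta)^T(x,u)} \;\middle|\; \begin{bmatrix}\zeta\\\delta\end{bmatrix}\in N^C_{\hypo f}((x,u)),\ (\zeta,\delta)^T(x,u)>0\right\}$$
as desired; the case $(\zeta,\delta)^T(x,u)\le 0$ is excluded because it cannot yield a normal of the form $(\zeta,-1)$ (a nonpositive inner product would give a normal with nonnegative last coordinate, hence $\ge 0$, never $-1$). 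The mirrored statement for $\partial^C\radTransInf{f}$ follows by the symmetric argument using the lower radial transformation, Lemma~\ref{lem:hypo-epi-inclusion}'s equality $\hypo\radTransInf{f}=\Gamma(\epi f)$, and the sign reversal in the supdifferential's defining condition $(-\zeta,1)\in N^C_{\epi f}$.

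I do not expect a serious obstacle here: the content is entirely in Proposition~\ref{prop:convex-normals} and Lemma~\ref{lem:hypo-epi-inclusion}, both available. The only point requiring a little care is bookkeeping the sign conventions — tracking that the ``$-1$'' in the last coordinate of the defining normal $(\zeta,-1)$ of an epigraph forces $(\zeta',\delta')^T(x,u)$ to be positive (not just nonzero), which is exactly why the strict inequality $(\zeta,\delta)^T(x,u)>0$ appears and why strict upper radiality (giving equality in Lemma~\ref{lem:hypo-epi-inclusion}, not just inclusion) is needed rather than mere upper radiality.
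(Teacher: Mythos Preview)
Your proposal is correct and follows essentially the same approach as the paper: invoke Lemma~\ref{lem:hypo-epi-inclusion} (using strict upper radiality to get equality) to identify $\epi\radTransSup{f}$ with $\Gamma(\hypo f)$, then apply Proposition~\ref{prop:convex-normals} to obtain the normal cone formula, and read off the subdifferential from its definition. The paper's proof is terser, simply stating that ``the claimed subgradient and supgradient formulas follow by definition'' after writing the normal cone identity~\eqref{eq:upper-radial-convex-normal}, whereas you spell out the rescaling-by-$1/c$ step that exploits the cone structure of $N^C_{\hypo f}$; this is exactly the detail the paper leaves implicit.
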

\begin{proof}
	Recall that Proposition~\ref{prop:convex-normals} characterized the convex normal vectors of the radial transformation of a set in terms of the original set. Since the assumed strict increase ensures equality holds in Lemma~\ref{lem:hypo-epi-inclusion}, this applies to the epigraph and hypograph of $\radTransSup{f}$ and $\radTransInf{f}$, respectively. Thus when $f$ is strictly upper radial
	\begin{equation}\label{eq:upper-radial-convex-normal}
	N^C_{\epi \radTransSup{f}}((y,v)) = \left\{\begin{bmatrix} \zeta \\ -(\zeta,\delta)^T(x,u)\end{bmatrix} \mid \begin{bmatrix} \zeta \\ \delta\end{bmatrix}\in N^C_{\hypo f}((x,u))\right\}
	\end{equation}
	and when $f$ is strictly lower radial
	\begin{equation}\label{eq:lower-radial-convex-normal}
	N^C_{\hypo \radTransInf{f}}((y,v)) = \left\{\begin{bmatrix} \zeta \\ -(\zeta,\delta)^T(x,u)\end{bmatrix} \mid \begin{bmatrix} \zeta \\ \delta\end{bmatrix}\in N^C_{\epi f}((x,u))\right\}.
	\end{equation}
	Then the claimed sub(sup)gradient formulas follow by definition. \qed
\end{proof}
\begin{proposition} \label{prop:proximal-subgradients}
	For any strictly upper radial $f$,
	$$ \partial_P\radTransSup{f}(y) = \left\{\frac{\zeta}{(\zeta,\delta)^T(x,u)} \mid \begin{bmatrix} \zeta \\ \delta\end{bmatrix}\in N^P_{\hypo f}((x,u)),\ (\zeta,\delta)^T(x,u)>0\right\}$$
	where $(x,u) = \Gamma(y,\radTransSup{f}(y))$. Likewise, for any strictly lower radial $f$,
	$$ \partial^P\radTransInf{f}(y) = \left\{\frac{\zeta}{(\zeta,\delta)^T(x,u)} \mid \begin{bmatrix} \zeta \\ \delta\end{bmatrix}\in N^P_{\epi f}((x,u)),\ (\zeta,\delta)^T(x,u)<0\right\}.$$
\end{proposition}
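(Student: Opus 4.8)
The plan is to mirror the proof of Proposition~\ref{prop:convex-subgradients} almost verbatim, with the only change being to invoke the proximal normal-cone description of Proposition~\ref{prop:proximal-normals} in place of the convex one of Proposition~\ref{prop:convex-normals}. Fix $y\in\dom\radTransSup{f}$ and write $v = \radTransSup{f}(y)\in\RR_{++}$ and $(x,u) = \Gamma(y,v)=(y/v,1/v)\in\varSpace\times\RR_{++}$. The first step is to observe that because $f$ is \emph{strictly} upper radial, $f^p(y,\cdot)$ is strictly increasing on its domain, so Lemma~\ref{lem:hypo-epi-inclusion} upgrades to the exact identity $\epi\radTransSup{f} = \Gamma(\hypo f)$; this strictness is exactly what is needed, since a mere inclusion would not let us transfer normal cones in both directions. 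In particular $(y,v)\in\Gamma(\hypo f)$ and its $\Gamma$-preimage $(x,u)$ lies in $\hypo f$, so $N^P_{\hypo f}((x,u))$ is meaningful.

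The second step is to apply Proposition~\ref{prop:proximal-normals} with $S = \hypo f$ and $\Gamma S = \epi\radTransSup{f}$, which yields
$$N^P_{\epi\radTransSup{f}}((y,v)) = \left\{\begin{bmatrix} \zeta \\ -(\zeta,\delta)^T(x,u)\end{bmatrix} \mid \begin{bmatrix} \zeta \\ \delta\end{bmatrix}\in N^P_{\hypo f}((x,u))\right\},$$
the proximal analogue of the normal-cone identity used in the proof of Proposition~\ref{prop:convex-subgradients}. The claimed formula then drops out of the definition $\partial_P\radTransSup{f}(y) = \{\eta \mid (\eta,-1)\in N^P_{\epi\radTransSup{f}}((y,v))\}$: such an $(\eta,-1)$ matches the right-hand side precisely when $\eta=\zeta$ and $(\zeta,\delta)^T(x,u)=1$ for some $(\zeta,\delta)\in N^P_{\hypo f}((x,u))$, and since $N^P_{\hypo f}((x,u))$ is a cone, rescaling shows this is the same as $\eta = \zeta/((\zeta,\delta)^T(x,u))$ for some $(\zeta,\delta)\in N^P_{\hypo f}((x,u))$ with $(\zeta,\delta)^T(x,u)>0$. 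The lower-radial assertion is obtained the same way, swapping the roles of $\epi$ and $\hypo$ and using the second halves of Lemma~\ref{lem:hypo-epi-inclusion} and Proposition~\ref{prop:proximal-normals} together with the definition of $\partial^P$.

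I do not expect any genuine obstacle here: all of the geometric content has already been packaged into Proposition~\ref{prop:proximal-normals} (which in turn rests on the ellipsoid-preservation Proposition~\ref{prop:ellipsoid-radial}), so this proof is purely a matter of assembling earlier pieces. The only points that warrant a moment's care are (i) remembering that the strict-increase hypothesis is what promotes Lemma~\ref{lem:hypo-epi-inclusion} to an equality of sets, and (ii) the routine cone-rescaling bookkeeping that converts the normal-cone identity into the stated quotient form of the subdifferential.
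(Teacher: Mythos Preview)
Your proposal is correct and follows exactly the paper's own argument: use strict radiality to get equality in Lemma~\ref{lem:hypo-epi-inclusion}, apply Proposition~\ref{prop:proximal-normals} to obtain the proximal normal-cone identity for $\epi\radTransSup{f}$, and then read off the subdifferential from the definition. If anything, you spell out the cone-rescaling step more explicitly than the paper does.
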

\begin{proof}
	Using Proposition~\ref{prop:proximal-normals} to describe the proximal normal of $\epi f^\Gamma$, this follows from the same reasoning as the proof of Proposition~\ref{prop:convex-subgradients}. \qed
\end{proof}

\subsection{Gradients and Hessians for Differentiable Functions} \label{subsec:gradients}
Here we narrow our focus to consider differentiable functions under the radial transformation. Whenever $\radTransSup{f}$ is differentiable, a formula for its gradient follows from the subgradient formula in Proposition~\ref{prop:proximal-subgradients}. To establish when  $\radTransSup{f}$ is differentiable, we show that being $k$ times continuously differentiable (or analytic) is preserved under the radial transformation for appropriate functions. Lastly, we give a formula for the Hessian of the radial transformation of any appropriate twice differentiable function. 

As a first step, we give a simple bijection between the graphs (and thus domains) of a function $f$ and its radial transformation $\radTransSup{f}$ whenever $f$ is continuous and strictly radial.
\begin{lemma}\label{lem:graph-bijection}
	If either $f$ or $f^\Gamma$ is continuous and strictly radial,
	$ \graph \radTransSup{f} = \Gamma(\graph f).$
	Hence, if $y\in\dom\radTransSup{f}$ then $y/\radTransSup{f}(y) \in\dom f$.
\end{lemma}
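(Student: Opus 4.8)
The plan is to obtain $\graph\radTransSup{f} = \Gamma(\graph f)$ by sandwiching it between the two inclusions we already have available. From Lemma~\ref{lem:hypo-epi-inclusion}, the strict increase of $f^p(y,\cdot)$ (which holds since $f$ is strictly radial) gives the exact identity $\epi\radTransSup{f} = \Gamma(\hypo f)$. Applying the analogous statement to the lower transform and then invoking $\radTransSup{f} = \radTransInf{f}$ (which holds by~\eqref{eq:strict-increase-condition} for strictly radial $f$) gives $\hypo\radTransSup{f} = \Gamma(\epi f)$ as well. Since $\Gamma$ is a bijection on $\varSpace\times\RR_{++}$ that distributes over intersections by~\eqref{eq:radial-intersection}, I would then compute
$$\graph\radTransSup{f} = \epi\radTransSup{f}\cap\hypo\radTransSup{f} = \Gamma(\hypo f)\cap\Gamma(\epi f) = \Gamma(\hypo f\cap\epi f) = \Gamma(\graph f),$$
using that $\graph g = \epi g \cap \hypo g$ for any $g$, applied to both $\radTransSup{f}$ and $f$.

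The one subtlety is that the first equality above, $\graph\radTransSup{f} = \epi\radTransSup{f}\cap\hypo\radTransSup{f}$, requires $\radTransSup{f}(y)$ to be finite at the relevant points, i.e.\ one must be careful about the boundary behavior where $\radTransSup{f}$ takes the value $0$ or $+\infty$. I would handle this by working pointwise: for $y$ with $\radTransSup{f}(y)\in\RR_{++}$, the point $(y,\radTransSup{f}(y))$ lies in both $\epi\radTransSup{f}$ and $\hypo\radTransSup{f}$, hence in $\Gamma(\graph f)$; conversely any point of $\Gamma(\graph f)$ has positive last coordinate and lies in both $\Gamma(\hypo f) = \epi\radTransSup{f}$ and $\Gamma(\epi f) = \hypo\radTransSup{f}$, so it is a point of $\graph\radTransSup{f}$. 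The continuity hypothesis on $f$ ensures $\hypo f$ and $\epi f$ are closed, which is what makes the strict-increase case of Lemma~\ref{lem:hypo-epi-inclusion} apply cleanly and guarantees the equalities rather than mere inclusions; it also rules out pathologies where the graph is ``missed'' by the supremum/infimum defining the transforms.

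The final sentence is then immediate: if $y\in\dom\radTransSup{f}$, then $(y,\radTransSup{f}(y))\in\graph\radTransSup{f} = \Gamma(\graph f)$, so there is $(x,u)\in\graph f$ with $\Gamma(x,u) = (y,\radTransSup{f}(y))$. Since $\Gamma$ is an involution, $(x,u) = \Gamma(y,\radTransSup{f}(y)) = (y/\radTransSup{f}(y),\,1/\radTransSup{f}(y))$, and $(x,u)\in\graph f$ forces $x = y/\radTransSup{f}(y)\in\dom f$ with $f(x) = u = 1/\radTransSup{f}(y)$.

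\textbf{Main obstacle.} I expect the only real work is the careful bookkeeping at the extreme values $\{0,+\infty\}$ of $\radTransSup{f}$ — making sure the graph equality is argued pointwise over $\dom\radTransSup{f}$ rather than as a naive set identity that could fail where $\radTransSup{f}$ is infinite or zero. Everything else is a direct combination of Lemma~\ref{lem:hypo-epi-inclusion}, the equality $\radTransSup{f}=\radTransInf{f}$ from~\eqref{eq:strict-increase-condition}, and the fact that $\Gamma$ is a distributive involution.
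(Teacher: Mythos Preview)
Your approach is essentially identical to the paper's: it computes $\graph \radTransSup{f} = \epi \radTransSup{f}\cap\hypo \radTransSup{f} = \Gamma(\hypo f)\cap\Gamma(\epi f) = \Gamma(\graph f)$, citing Lemma~\ref{lem:hypo-epi-inclusion} (with the strict-increase equality case) for the middle step and~\eqref{eq:radial-intersection} for distributing $\Gamma$. Your worry about the extreme values $0,+\infty$ is unnecessary, since by the paper's conventions $\graph g$, $\epi g$, $\hypo g$ are all defined as subsets of $\varSpace\times\RR_{++}$, so $\graph g = \epi g\cap\hypo g$ holds identically for any $g\colon\varSpace\to\extPos$; likewise the continuity hypothesis is not used to close $\hypo f$ or $\epi f$ (Lemma~\ref{lem:hypo-epi-inclusion} does not require that) but rather just to make the shorthand ``strictly radial'' well-defined per the paper's footnote.
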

\begin{proof}
	If $f$ is continuous and strictly radial, then noting that $\graph f = \epi f \cap \hypo f$, we have
	\begin{align*}
	\graph \radTransSup{f} = \epi \radTransSup{f} \cap \hypo \radTransSup{f}
	= \Gamma(\epi f) \cap \Gamma(\hypo f)
	= \Gamma(\epi f \cap \hypo f)
	\end{align*}
	where the second equality follows from Lemma~\ref{lem:hypo-epi-inclusion} and the third from~\eqref{eq:radial-intersection}. If instead $f^\Gamma$ is continuous and strictly radial, the above reasoning implies $\graph f^{\Gamma\Gamma}=\Gamma(\graph f^{\Gamma})$. This is equivalent to $ \graph \radTransSup{f} = \Gamma(\graph f)$ by Theorem~\ref{thm:radial-function-duality}.\qed
\end{proof}

This lemma lets us view the graph of the radial transformation as the relation $\Gamma(\graph f)$. Applying the implicit function theorem to this relation shows differentiability is preserved under the transformation for appropriate functions. Then leveraging the previous section's results on the proximal subdifferential gives a formula for the gradient of the radial transformation. A similar result for convex functions was previously given by~\cite[Lemma 3.5]{Artstein2017}.
\begin{proposition}\label{prop:differentiable-preserved}
	Consider any continuous, strictly radial $f$ with points $(x,f(x))=\Gamma(y,\radTransSup{f}(y))$. Then $f$ is $k$ times continuously differentiable (or analytic) around $x$ with 
	$$(\nabla f(x),-1)^T(x,f(x))<0$$ if and only if $\radTransSup{f}=\radTransInf{f}$ is $k$ times continuously differentiable (or analytic) around $y$ with $$(\nabla \radTransSup{f}(y),-1)^T(y,\radTransSup{f}(y))<0$$
	where
	$ \nabla \radTransSup{f}(y) = \nabla f(x)/(\nabla f(x),-1)^T(x,f(x)).$
\end{proposition}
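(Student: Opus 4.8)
The plan is to treat the graph of $\radTransSup{f}$ as the relation $\Gamma(\graph f)$ via Lemma~\ref{lem:graph-bijection}, and to read off differentiability and the gradient formula by applying the implicit function theorem to this relation. Concretely, near $x$ the graph of $f$ is the zero set of the $C^k$ (or analytic) function $F(x',u') = f(x') - u'$, and the map $\Gamma$ is itself analytic and a diffeomorphism on $\varSpace\times\RR_{++}$. Hence $\Gamma(\graph f)$ is locally the zero set of $G := F\circ\Gamma^{-1} = F\circ\Gamma$, which is $C^k$ (or analytic) near $(y,v) = \Gamma(x,f(x))$. Writing $\Gamma(y',v') = (y'/v',\,1/v')$, we get $G(y',v') = f(y'/v') - 1/v'$. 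The partial derivative in $v'$ is $\partial_{v'}G(y',v') = -\nabla f(y'/v')^T(y'/v'^2) + 1/v'^2 = v'^{-2}\bigl(1 - \nabla f(y'/v')^T(y'/v')\bigr)$; at $(y,v)$ with $x = y/v$ and $f(x) = 1/v$ this equals $-v^{-2}\cdot\bigl(\nabla f(x),-1\bigr)^T(x,f(x))\cdot f(x)^{-1}$ up to rearrangement — in any case it is nonzero precisely when $(\nabla f(x),-1)^T(x,f(x)) \neq 0$, and strictly positive exactly when that quantity is negative. So under the stated hypothesis the implicit function theorem gives a unique $C^k$ (or analytic) solution $v = \radTransSup{f}(y)$ near $y$, establishing that $\radTransSup{f}=\radTransInf{f}$ (equality here is Proposition~\ref{prop:differentiable-strict-increase-S} together with~\eqref{eq:strict-increase-condition}) is $C^k$ (analytic) near $y$.

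For the gradient, once $\radTransSup{f}$ is known to be differentiable at $y$, Proposition~\ref{prop:proximal-subgradients} applies: since $f$ is differentiable at $x$ with $f(x) = u$, the only proximal normal of $\hypo f$ at $(x,u)$ is (a nonnegative multiple of) $(-\nabla f(x),1)$, equivalently $(\zeta,\delta) = \lambda(\nabla f(x),-1)$ works for the supdifferential; plugging into the formula $\partial_P\radTransSup{f}(y) = \{\zeta/((\zeta,\delta)^T(x,u))\}$ and cancelling $\lambda$ yields $\nabla\radTransSup{f}(y) = \nabla f(x)/\bigl((\nabla f(x),-1)^T(x,f(x))\bigr)$, as claimed. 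Alternatively and more self-contained, one differentiates the identity $\radTransSup{f}(y)\cdot f\bigl(y/\radTransSup{f}(y)\bigr) = 1$ implicitly — this is the cleaner route and also directly yields the sign condition on $(\nabla\radTransSup{f}(y),-1)^T(y,\radTransSup{f}(y))$, which should come out equal to the reciprocal of $(\nabla f(x),-1)^T(x,f(x))$ after simplification, hence negative. Finally, the reverse direction is immediate by the self-duality $\biradTransSup{f}=f$ (Theorem~\ref{thm:radial-function-duality}, applicable since strict radiality implies upper radiality): apply the forward direction to $\radTransSup{f}$.

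I expect the main obstacle to be bookkeeping rather than conceptual: correctly verifying that the nonvanishing of the relevant Jacobian minor is \emph{equivalent} to $(\nabla f(x),-1)^T(x,f(x)) \neq 0$ (not merely implied by it), so that the implicit function theorem can be run in both directions and the ``if and only if'' is genuine; and tracking the chain-rule factors of $\radTransSup{f}(y)$ and $f(x)$ carefully enough to land on the stated gradient formula with the correct denominator and to confirm the strict-inequality condition transfers. A secondary subtlety is handling the boundary of $\dom f$: the hypothesis $(x,f(x)) = \Gamma(y,\radTransSup{f}(y))$ with $\radTransSup{f}(y)$ finite forces $x \in \dom f$ by Lemma~\ref{lem:graph-bijection}, so the implicit function theorem is applied at an interior point of the domain and no degeneracy arises, but this should be stated explicitly.
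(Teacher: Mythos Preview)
Your plan is essentially the paper's proof: invoke Lemma~\ref{lem:graph-bijection} to realize $\graph\radTransSup{f}$ as the zero set of a $C^k$/analytic function, apply the implicit function theorem using the sign of $(\nabla f(x),-1)^T(x,f(x))$, and get the reverse direction from $\biradTransSup{f}=f$. One caution on your primary route to the gradient formula: invoking Proposition~\ref{prop:proximal-subgradients} \emph{at the point $y$ itself} tacitly assumes $N^P_{\hypo f}((x,f(x)))$ contains a nonzero vector, but for a merely $C^1$ function the proximal normal cone can be $\{0\}$ (e.g.\ $f(x)=1+|x|^{3/2}$ at $x=0$), in which case that proposition yields $\partial_P\radTransSup{f}(y)=\emptyset$ and nothing can be read off. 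The paper sidesteps this by using the density theorem of proximal analysis to pick nearby $y_i\to y$ where $\partial_P\radTransSup{f}(y_i)\neq\emptyset$ and then passing to the limit via the continuity of $\nabla f$ and $\nabla\radTransSup{f}$. Your stated alternative---differentiating the identity $\radTransSup{f}(y)\,f(y/\radTransSup{f}(y))=1$ directly, i.e.\ reading the derivative straight out of the implicit function theorem---is cleaner than either route and avoids the issue entirely, so I would make that the main argument rather than an aside; it also immediately gives $(\nabla\radTransSup{f}(y),-1)^T(y,\radTransSup{f}(y)) = 1/\bigl((\nabla f(x),-1)^T(x,f(x))\bigr)<0$, exactly as you anticipated.
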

\begin{proof}
	It suffices to only show the forward direction as Theorem~\ref{thm:radial-function-duality} will then imply the reverse direction. Define the following $k$ times continuously differentiable (or analytic) function
	$ F(y',v')= v'\cdot f(y'/v') -1.$
	Then from Lemma~\ref{lem:graph-bijection}, $\graph \radTransSup{f} = \{(y',v')\in\varSpace\times\RR_{++} \mid F(y',v')=0\}$. 
	Noting
	$$\frac{\partial}{\partial v} F(y,\radTransSup{f}(y)) = f(y/\radTransSup{f}(y)) - \nabla f(y/\radTransSup{f}(y))^T(y/\radTransSup{f}(y)),$$
	we find $\frac{\partial}{\partial v} F(y,\radTransSup{f}(y)) = f(x) - \nabla f(x)^Tx >0$.
	Thus the implicit function theorem can be applied to produce a $k$ times continuously differentiable (or analytic) function $g\colon U\rightarrow \RR_{++}$ for some open neighborhood $U$ of $y$ such that
	$$\graph \radTransSup{f} \cap (U\times \RR_{++}) = \{(y,g(y))\mid y\in U\}.$$
	As a result, $\radTransSup{f}$ must equal $g$ near $y$, and hence is also $k$ times continuously differentiable (or analytic) near $y$.
	
	All that remains is to derive our gradient formula and show it satisfies the claimed inequality. Consider any $y\in\dom f^\Gamma$ and set $x=y/f^\Gamma(y)\in\dom f$. The density theorem of proximal calculus~\cite[Theorem 1.3.1]{Clarke1998-nonsmoothanalysis} guarantees a sequence $y_i\rightarrow y$ exists with all $\partial_P \radTransSup{f}(y_i)\neq\emptyset$. Then using the subgradient formula of Proposition~\ref{prop:proximal-subgradients} and letting $x_i = y_i/\radTransSup{f}(y_i)$,
	$$\nabla \radTransSup{f}(y_i) = \frac{\nabla f(x_i)}{(\nabla f(x_i),-1)^T(x_i,f(x_i))}$$ 
	since $N^P_{\epi f}(x_i,f(x_i)) \subseteq \{\lambda(\nabla f(x_i),-1)\mid \lambda\geq 0\}$. Since $x_i\rightarrow x$, the continuous differentiability of $f$ and $f^\Gamma$ ensures
	$$ \nabla \radTransSup{f}(y) = \frac{\nabla f(x)}{(\nabla f(x),-1)^T(x,f(x))}.$$
	From this, it is immediate that
	\begin{align*}
	(\nabla \radTransSup{f}(y),-1)^T(y,\radTransSup{f}(y)) &= \left(\frac{\nabla f(x)}{(\nabla f(x),-1)^T(x,f(x))},-1\right)^T\left(\frac{x}{f(x)}, \frac{1}{f(x)}\right)\\
	&=\frac{1}{f(x)}\left(\frac{\nabla f(x)^Tx}{(\nabla f(x),-1)^T(x,f(x))} -1\right)\\
	&=\frac{1}{(\nabla f(x),-1)^T(x,f(x))}<0. \tag*{\qed}
	\end{align*}
\end{proof}
We remark that this result does not capture all functions for which the radial transformation is differentiable. For example, the strictly upper radial function
$$ f(x) = \begin{cases} 1+\sqrt{1-x^2} & \text{if }-1\leq x\leq 1\\ 0 &\text{otherwise}\end{cases}$$
is not differentiable everywhere in its domain (namely, it fails at $x=\pm1$). However, its upper radial transformation is differentiable everywhere
$$ \radTransSup{f}(y) = \begin{cases} (y^2+1)/2 &\text{if } -1\leq y\leq 1\\ |y| &\text{otherwise.}\end{cases}$$

Differentiating the gradient formula of Proposition~\ref{prop:differentiable-preserved} directly gives a Hessian formula for the radial transformation of a function. The discussion following Proposition 4.1 of~\cite{Artstein2017} derived a similar formula for convex functions. Therein further results are discussed on determinants of these objects, which can be readily computed since $\Gamma$'s determinant can be easily computed. 
\begin{proposition}\label{prop:hessian-conversion}
	Consider continuous, strictly radial $f$ with points $(x,f(x)) = \Gamma(y,\radTransSup{f}(y))$. If $f$ is twice continuously differentiable around $x$ with
	$$(\nabla f(x),-1)^T(x,f(x))<0,$$
	the Hessian of $\radTransSup{f}=\radTransInf{f}$ at $y$ is given by
	$$ \nabla^2\radTransSup{f}(y) = \frac{f(x)}{(\nabla f(x),-1)^T(x,f(x))}\cdot J\nabla^2 f(x)J^T$$
	where $J = I -\frac{\nabla f(x)x^T}{(\nabla f(x),-1)^T(x,f(x))}$.
\end{proposition}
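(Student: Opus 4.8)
The plan is to treat the gradient formula of Proposition~\ref{prop:differentiable-preserved} as an identity valid on a whole neighborhood of $y$ and differentiate it once more via the chain rule. First I would note that, since $f$ is twice continuously differentiable around $x$ with $(\nabla f(x),-1)^T(x,f(x))<0$, Proposition~\ref{prop:differentiable-preserved} applied with $k=2$ gives that $\radTransSup{f}=\radTransInf{f}$ is twice continuously differentiable on some open neighborhood $U\ni y$. Write $x(y'):=y'/\radTransSup{f}(y')$ and $D(y'):=(\nabla f(x(y')),-1)^T(x(y'),f(x(y')))=\nabla f(x(y'))^Tx(y')-f(x(y'))$. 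Applying that proposition pointwise at each nearby $y'\in U$ (note $x(y')\in\dom f$ stays near $x$, so $D(y')<0$ by continuity) yields
\[ \nabla \radTransSup{f}(y') = \frac{\nabla f(x(y'))}{D(y')} \qquad\text{for all } y'\in U. \]
Because $\radTransSup{f}$ is $C^2$, the map $y'\mapsto x(y')$ is $C^1$ on $U$; moreover $(x(y'),f(x(y')))=\Gamma(y',\radTransSup{f}(y'))$ gives the useful identity $f(x(y'))=1/\radTransSup{f}(y')$.

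Next I would differentiate the two ingredients $x(\cdot)$ and $D(\cdot)$ at $y$. Writing $v=\radTransSup{f}(y)$, differentiating $x(y')=y'/\radTransSup{f}(y')$ gives the Jacobian $\frac{\partial x}{\partial y}=\frac{1}{v}\big(I-x\,\nabla\radTransSup{f}(y)^T\big)$; substituting $\nabla\radTransSup{f}(y)=\nabla f(x)/D$ turns this into exactly $\frac{\partial x}{\partial y}=\frac{1}{v}J^T$ with $J = I - \frac{\nabla f(x)x^T}{D}$ as in the statement — a pleasant simplification that makes the congruence structure surface on its own. For $D(\cdot)$, the chain rule together with the symmetry of $\nabla^2 f(x)$ causes the two terms involving only $\nabla f(x)$ to cancel, leaving $\nabla_y D = \frac{1}{v}\,J\,\nabla^2 f(x)\,x$.

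Finally I would apply the quotient rule to $\nabla\radTransSup{f}(y')=\nabla f(x(y'))/D(y')$ at $y'=y$ and substitute the two derivatives above, obtaining
\[ \nabla^2\radTransSup{f}(y) = \frac{1}{vD}\Big(\nabla^2 f(x) - \frac{1}{D}\nabla f(x)\,\big(\nabla^2 f(x)x\big)^T\Big)J^T = \frac{1}{vD}\,\big(J\,\nabla^2 f(x)\big)\,J^T, \]
where the last equality is the identity $J\nabla^2 f(x) = \nabla^2 f(x) - \frac{1}{D}\nabla f(x)(\nabla^2 f(x)x)^T$, again using $x^T\nabla^2 f(x) = (\nabla^2 f(x)x)^T$. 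Since $1/v=f(x)$ and $D=(\nabla f(x),-1)^T(x,f(x))$, this is precisely the claimed formula. The main obstacle is not conceptual but bookkeeping: one must keep the two scalars $v=\radTransSup{f}(y)$ and $D=(\nabla f(x),-1)^T(x,f(x))$ carefully distinct throughout, and invoke the symmetry of $\nabla^2 f(x)$ at exactly the right moments, so that all the rank-one correction terms reassemble into the two-sided congruence $J(\cdot)J^T$ rather than a messier expression. Everything else is immediate from Proposition~\ref{prop:differentiable-preserved} and the Jacobian computation $\frac{\partial x}{\partial y}=\frac{1}{v}J^T$.
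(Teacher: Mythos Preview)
Your proposal is correct and follows essentially the same approach as the paper: both differentiate the gradient formula of Proposition~\ref{prop:differentiable-preserved} via the chain rule through the reparametrization $y\mapsto x=y/\radTransSup{f}(y)$, obtaining the Jacobian $\frac{1}{v}J^T$ and then assembling the Hessian as $\frac{f(x)}{D}J\nabla^2 f(x)J^T$. The only cosmetic difference is that the paper factors $\nabla\radTransSup{f}=g\circ\pi$ and computes $\nabla g$ as a function of $x$ directly (using $\nabla_x D=\nabla^2 f(x)x$), whereas you keep everything parameterized by $y$ and apply the quotient rule---the algebra is identical.
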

\begin{proof}
	Denote the bijection relating the domains of $\radTransSup{f}$ and $f$ by $\pi(y) = y/\radTransSup{f}(y)$ (as shown by Lemma~\ref{lem:graph-bijection}). Then the gradient of the radial transformation is
	$$ \nabla \radTransSup{f}(y) = \frac{\nabla f(\pi(y))}{(\nabla f(\pi(y)),-1)^T(\pi(y),f(\pi(y)))}.$$
	Thus the Jacobian of $\pi$ is given by
	\begin{align*}
	\nabla \pi(y) &= \frac{I}{\radTransSup{f}(y)} - \frac{y\nabla \radTransSup{f}(y)^T}{\radTransSup{f}(y)^2}\\
	&=\frac{1}{\radTransSup{f}(y)}\left( I - \frac{y\nabla f(\pi(y))^T}{\radTransSup{f}(y)(\nabla f(\pi(y)),-1)^T(\pi(y),f(\pi(y)))}\right)\\
	&=f(\pi(y))\left( I - \frac{\pi(y)\nabla f(\pi(y))^T}{(\nabla f(\pi(y)),-1)^T(\pi(y),f(\pi(y)))}\right)
	\end{align*}
	where the third equality uses that $y/\radTransSup{f}(y) = \pi(y)$ and $1/\radTransSup{f}(y) = f(\pi(y))$ by Lemma~\ref{lem:graph-bijection}. Let $g(x) = \nabla f(x)/(\nabla f(x),-1)^T(x,f(x))$. Noting that the gradient of $(\nabla f(x),-1)^T(x,f(x))$ is $\nabla^2 f(x)^Tx$, the Jacobian of $g$ is given by
	\begin{align*}
	\nabla g(x) 
	&= \frac{\nabla^2 f(x)}{(\nabla f(x),-1)^T(x,f(x))} - \frac{\nabla f(x)x^T\nabla f^2(x)}{(\nabla f(x),-1)^T(x,f(x))^2}\\
	&= \frac{1}{(\nabla f(x),-1)^T(x,f(x))}\left(I - \frac{\nabla f(x) x^T}{(\nabla f(x),-1)^T(x,f(x))}\right)\nabla^2 f(x).
	\end{align*}
	Since $\nabla \radTransSup{f}(y) = g(\pi(y))$, the Hessian of $\radTransSup{f}$ is given by $\nabla g(\pi(y)) \nabla \pi(y)$ which is exactly the claimed formula. \qed
\end{proof}

\subsection{Optimality Under the Radial Transformation}\label{subsec:optimal}
Before addressing optimality under our radial duality, we observe that inequalities between functions are reversed by applying either radial function transformation. This mirrors~\eqref{eq:radial-inclusion}, where we saw the radial set transformation preserves inclusions between sets. We say $f \leq g$ if $f(x)\leq g(x)$ for all $x\in\varSpace$.
\begin{lemma}\label{lem:order-preserving}
	For any functions $f,g$, if $f\leq g$, then $\radTransSup{g}\leq\radTransSup{f}$ and $\radTransInf{g}\leq\radTransInf{f}$.
\end{lemma}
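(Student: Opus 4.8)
The statement is an order-reversal property: $f \leq g \implies \radTransSup{g} \leq \radTransSup{f}$. The natural approach is to trace through the definition of the radial transformation directly, since $\radTransSup{f}(y) = \sup\{v > 0 \mid v \cdot f(y/v) \leq 1\}$ is built from a supremum over a condition that weakens as $f$ decreases.

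First I would fix an arbitrary $y \in \varSpace$ and compare the two sets over which the suprema are taken. Since $f \leq g$, for any $v > 0$ we have $v \cdot f(y/v) \leq v \cdot g(y/v)$, so the condition $v \cdot g(y/v) \leq 1$ is more restrictive than $v \cdot f(y/v) \leq 1$. Hence $\{v > 0 \mid v \cdot g(y/v) \leq 1\} \subseteq \{v > 0 \mid v \cdot f(y/v) \leq 1\}$, and taking suprema of both sides gives $\radTransSup{g}(y) \leq \radTransSup{f}(y)$. (One should note the degenerate case where the set for $g$ is empty, in which case $\radTransSup{g}(y) = 0 \leq \radTransSup{f}(y)$ trivially, consistent with the footnote convention in the definition.) The argument for $\radTransInf{}$ is symmetric: $f \leq g$ makes $v \cdot g(y/v) \geq 1$ less restrictive than $v \cdot f(y/v) \geq 1$, so $\{v > 0 \mid v \cdot f(y/v) \geq 1\} \subseteq \{v > 0 \mid v \cdot g(y/v) \geq 1\}$, and taking infima reverses the inclusion to give $\radTransInf{g}(y) \leq \radTransInf{f}(y)$.

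Alternatively — and perhaps more in keeping with the set-theoretic spirit of the paper — one could observe that $f \leq g$ is equivalent to $\epi f \supseteq \epi g$, apply the inclusion-preservation of the radial set transformation~\eqref{eq:radial-inclusion} to get $\radTransSet{(\epi f)} \supseteq \radTransSet{(\epi g)}$, and then note that $\radTransSup{f}(y) = \sup\{v > 0 \mid (y,v) \in \radTransSet{(\epi f)}\}$ is monotone in the underlying set, yielding $\radTransSup{g}(y) \leq \radTransSup{f}(y)$; the hypograph version $\hypo f \subseteq \hypo g$ handles $\radTransInf{}$ analogously.

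This is an elementary lemma and I do not anticipate a genuine obstacle; the only point requiring a moment's care is the boundary convention when the defining set of admissible $v$ is empty, which is handled by the stated convention that the supremum is then $0$ (and, correspondingly, the infimum over an empty set defining $\radTransInf{}$ should be read consistently with its definition).
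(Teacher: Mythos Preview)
Your proposal is correct, and in fact your second (``set-theoretic'') alternative is exactly the paper's proof: the paper observes $f\leq g \iff \epi g\subseteq\epi f$, applies~\eqref{eq:radial-inclusion} to get $\radTransSet{(\epi g)}\subseteq\radTransSet{(\epi f)}$, and concludes $\radTransSup{g}\leq\radTransSup{f}$. Your first, direct-from-definition argument is an equally valid and essentially equivalent route.
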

\begin{proof}
	Notice that $f\leq g$ is equivalent to $\epi g \subseteq \epi f$. Then~\eqref{eq:radial-inclusion} gives $\radTransSet{(\epi g)} \subseteq \radTransSet{(\epi f)}$. Therefore $\radTransSup{f}(y)\geq\radTransSup{g}(y)$ for all $y\in\varSpace$. \qed
\end{proof}

Now we consider how the extreme values and points of a function and its radial transformations relate. First, we show for radial functions, the supremum value of $f$ equals the reciprocal of the infimum value of $\radTransSup{f}$ in Proposition~\ref{prop:optimality-characterization}. Then Proposition~\ref{prop:minimizer-characterization} shows the maximizers of $f$ are
related to minimizers of $\radTransSup{f}$ by the radial point transformation.

\begin{proposition}\label{prop:optimality-characterization}
	For any function $f$, $\left(\inf f\right)\cdot \left(\sup \radTransSup{f}\right) = 1$ where we let $\infty\cdot 0 = 0\cdot\infty = 1$. Further, if $f$ is upper radial, $\left(\sup f\right)\cdot \left(\inf \radTransSup{f}\right) = 1.$\\
	Likewise, $\left(\sup f\right)\cdot \left(\inf \radTransInf{f}\right) = 1$ and if $f$ is lower radial, $\left(\inf f\right)\cdot \left(\sup \radTransInf{f}\right) = 1.$
\end{proposition}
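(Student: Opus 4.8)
The plan is to derive everything from the single unconditional identity $\left(\inf f\right)\cdot\left(\sup\radTransSup{f}\right)=1$ and then obtain the remaining three identities by symmetry and by the duality of Theorem~\ref{thm:radial-function-duality}. For the unconditional identity I would start from the closed form $\radTransSup{f}(y)=\sup\{v>0\mid v\cdot f(y/v)\leq 1\}$ and take the supremum over $y\in\varSpace$. For each fixed $v>0$ the map $y\mapsto y/v$ is a bijection of $\varSpace$, so the double supremum collapses:
$$\sup\radTransSup{f}=\sup\{v>0\mid \exists\, x\in\varSpace \text{ with } v\cdot f(x)\leq 1\}.$$
I would then argue this equals $1/\inf f$ under the conventions $1/0=\infty$, $1/\infty=0$. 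If $0<\inf f<\infty$, the constraint $v\cdot f(x)\le 1$ is satisfiable precisely when $v\le 1/f(x)$, so the supremum is $\sup_x 1/f(x)=1/\inf f$. If $\inf f=0$, the constraint set is unbounded above (either because some $f(x)=0$, in which case $v\cdot f(x)=0\le 1$ for all $v$, or along a sequence $f(x_n)\to 0^{+}$, in which case $v=1/f(x_n)\to\infty$ works), so $\sup\radTransSup{f}=\infty$. If $f\equiv\infty$, the constraint set is empty, so $\radTransSup{f}\equiv 0$ by the footnote convention and $\sup\radTransSup{f}=0$. In every case $\left(\inf f\right)\cdot\left(\sup\radTransSup{f}\right)=1$ with the stated convention $0\cdot\infty=\infty\cdot 0=1$.

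For the conditional identity $\left(\sup f\right)\cdot\left(\inf\radTransSup{f}\right)=1$, I would invoke Theorem~\ref{thm:radial-function-duality} to get $\biradTransSup{f}=f$ for upper radial $f$, and then simply apply the unconditional identity to the function $\radTransSup{f}$ in place of $f$:
$$\left(\inf\radTransSup{f}\right)\cdot\left(\sup\biradTransSup{f}\right)=1,$$
which is exactly $\left(\inf\radTransSup{f}\right)\cdot\left(\sup f\right)=1$. The two statements for $\radTransInf{f}$ are handled the same way: first prove the unconditional identity $\left(\sup f\right)\cdot\left(\inf\radTransInf{f}\right)=1$ by repeating the computation with $\radTransInf{f}(y)=\inf\{v>0\mid v\cdot f(y/v)\geq 1\}$, replacing $\epi$ by $\hypo$ and exchanging the roles of $\inf$ and $\sup$ (the degenerate cases now being $f\equiv 0$, which forces $\radTransInf{f}\equiv\infty$, and $\sup f=\infty$, which forces $\inf\radTransInf{f}=0$); then the conditional identity $\left(\inf f\right)\cdot\left(\sup\radTransInf{f}\right)=1$ follows from $\biradTransInf{f}=f$ for lower radial $f$ applied to $\radTransInf{f}$.

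I do not expect a genuine obstacle: the substance is the change of variables $x=y/v$ that turns $v\cdot f(y/v)\le 1$ into $v\cdot f(x)\le 1$, after which the supremum defining $\sup\radTransSup{f}$ is transparently $1/\inf f$. The only thing demanding care is the bookkeeping of the boundary values $0$ and $\infty$ of $\extPos$ and the convention $0\cdot\infty=1$: one must separately confirm the degenerate functions ($f\equiv\infty$, $f\equiv 0$) and the case where $\inf f=0$ is not attained, using in particular the footnote convention that an empty constraint set makes the defining supremum equal $0$. These are all one-line verifications once the reduction to $\sup\radTransSup{f}=1/\inf f$ is established.
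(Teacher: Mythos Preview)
Your argument is correct. The overall architecture matches the paper's: establish the unconditional identity $\left(\inf f\right)\cdot\left(\sup\radTransSup{f}\right)=1$ first, then derive the conditional one by applying it to $\radTransSup{f}$ and invoking $\biradTransSup{f}=f$ from Theorem~\ref{thm:radial-function-duality}; the lower-transform claims follow symmetrically, exactly as the paper does.

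Where you differ is in the proof of the unconditional identity itself. The paper proceeds in two steps: the upper bound $\sup\radTransSup{f}\leq 1/\inf f$ comes from the order-reversing Lemma~\ref{lem:order-preserving} applied to the inequality $f\geq \inf f$ (noting that the radial transform of the constant $\inf f$ is the constant $1/\inf f$), and the lower bound is obtained by choosing a minimizing sequence $x_i$ and exhibiting points $y_i=x_i/(f(x_i)+\epsilon)$ with $\radTransSup{f}(y_i)\geq 1/(f(x_i)+\epsilon)$. You instead compute $\sup_y\radTransSup{f}(y)$ in one stroke by swapping the two suprema and using the bijection $x=y/v$, reducing directly to $\sup\{v>0\mid \exists\, x,\ v\cdot f(x)\leq 1\}=1/\inf f$. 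Your route is slightly more elementary (it does not need Lemma~\ref{lem:order-preserving}) and handles both inequalities simultaneously; the paper's route, on the other hand, isolates the order-reversing principle as the structural reason for one half of the identity, which is conceptually useful elsewhere. Both are complete; the case analysis you give for the boundary values $0$ and $\infty$ is correct and necessary in either approach.
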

\begin{proof}
	Observe that $f \geq \inf f$ and so Lemma~\ref{lem:order-preserving} implies
	$ \radTransSup{f} \leq \radTransSup{\left(\inf f\right)} = 1/\inf f.$
	First we show the $\geq$ inequality (which is trivial if $\inf f=\infty$). Suppose $\inf f<\infty$.
	Let $x_i$ be a sequence with $\lim f(x_i) = \inf f$. Then fix any $\epsilon>0$ and set $y_i = x_i/(f(x_i)+\epsilon)$. Observe that $v\cdot f(y_i/v)<1$ when $v=1/(f(x_i)+\epsilon)$. Therefore $\radTransSup{f}(y_i) \geq 1/(f(x_i)+\epsilon)$ and so taking the limit as $\epsilon\rightarrow 0$ implies
	$ \sup \radTransSup{f} \geq 1/\inf f.$
	Lastly, supposing $f$ is upper radial, Theorem~\ref{thm:radial-function-duality} ensures
	$1 = \left(\inf \radTransSup{f}\right)\cdot \left(\sup \biradTransSup{f}\right) = \left(\inf \radTransSup{f}\right)\cdot \left(\sup f\right).$ \qed
\end{proof}
\begin{proposition}\label{prop:minimizer-characterization}
	For any upper radial $f$ with $\sup f\in\RR_{++}$,
	$$ (\argmin \radTransSup{f} )\times\{\inf \radTransSup{f}\} \subseteq \radTransSet{\left((\argmax f )\times\{\sup f\}\right)}.$$
	Likewise for any lower radial $f$ with $\inf f\in\RR_{++}$,
	$$ (\argmax \radTransInf{f} )\times\{\sup \radTransInf{f}\} \subseteq \radTransSet{\left((\argmin f )\times\{\inf f\}\right)}.$$
	If $f^p(y,\cdot)$ is strictly increasing on its domain, equality holds in both cases.
\end{proposition}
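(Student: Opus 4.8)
The plan is to exploit that $\Gamma$ is an involution on $\varSpace\times\RR_{++}$ (see~\eqref{eq:radial-set-duality}), so the claimed set inclusion amounts to a pointwise correspondence between optimal pairs of $f$ and those of $\radTransSup{f}$. Write $p^*=\sup f$ and, invoking Proposition~\ref{prop:optimality-characterization} together with the hypothesis $p^*\in\RR_{++}$, record that $\inf\radTransSup{f}=1/p^*=:d^*\in\RR_{++}$. Every element of the left-hand side then has the form $(y^*,d^*)$ with $\radTransSup{f}(y^*)=d^*$. The essential input is Lemma~\ref{lem:hypo-epi-inclusion}: since $f$ is upper radial, $\epi\radTransSup{f}\subseteq\Gamma(\hypo f)$, with equality when $f^p(y,\cdot)$ is strictly increasing on its domain; and $(y^*,d^*)\in\epi\radTransSup{f}$ precisely because $d^*\in\RR_{++}$.

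For the forward inclusion I would argue as follows. From $(y^*,d^*)\in\epi\radTransSup{f}\subseteq\Gamma(\hypo f)$ and the involution property, $\Gamma(y^*,d^*)=(y^*/d^*,\,1/d^*)\in\hypo f$, i.e.\ $f(y^*/d^*)\ge 1/d^*=p^*$. Since $p^*=\sup f$, this forces $f(y^*/d^*)=p^*$, so $x^*:=y^*/d^*\in\argmax f$. Because $d^*p^*=1$, a direct computation gives $\Gamma(x^*,p^*)=(x^*/p^*,\,1/p^*)=(y^*,d^*)$, hence $(y^*,d^*)\in\radTransSet{\left((\argmax f)\times\{p^*\}\right)}$, which is exactly the desired containment.

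For the equality statement, Lemma~\ref{lem:hypo-epi-inclusion} upgrades to $\epi\radTransSup{f}=\Gamma(\hypo f)$, and I would establish the reverse inclusion. Take $x^*\in\argmax f$ (if $\argmax f=\emptyset$ both sides are empty and there is nothing to prove); then $f(x^*)=p^*\in\RR_{++}$, so $(x^*,p^*)\in\hypo f$ and hence $\Gamma(x^*,p^*)=(x^*/p^*,\,d^*)\in\Gamma(\hypo f)=\epi\radTransSup{f}$, which gives $\radTransSup{f}(x^*/p^*)\le d^*$. Combined with $\radTransSup{f}\ge\inf\radTransSup{f}=d^*$ everywhere, this yields $\radTransSup{f}(x^*/p^*)=d^*$, so $x^*/p^*\in\argmin\radTransSup{f}$ and $\Gamma(x^*,p^*)$ lies in the left-hand side. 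The lower-radial mirror follows by the symmetric argument using $\hypo\radTransInf{f}\subseteq\Gamma(\epi f)$. The only genuine points of care are keeping the reciprocal relation $d^*p^*=1$ straight and checking that the relevant points lie in $\varSpace\times\RR_{++}$ — which is exactly where the hypothesis $\sup f\in\RR_{++}$ and Proposition~\ref{prop:optimality-characterization} are used; there is no substantive analytic obstacle once Lemma~\ref{lem:hypo-epi-inclusion} is in hand.
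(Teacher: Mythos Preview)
Your argument is correct and follows essentially the same route as the paper: both use Proposition~\ref{prop:optimality-characterization} to identify $\inf\radTransSup{f}=1/\sup f$, then apply Lemma~\ref{lem:hypo-epi-inclusion} (with equality in the strictly increasing case) together with the involution $\Gamma\Gamma=\mathrm{id}$ to pass between $\epi\radTransSup{f}$ and $\hypo f$. Your write-up is simply more explicit where the paper compresses the reverse inclusion into ``the above argument can be repeated in reverse.''
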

\begin{proof}
	Consider any $(y,v)\in (\argmin \radTransSup{f} )\times\{\inf \radTransSup{f}\}$ and set $(x,u)=\Gamma(y,v)$. Then $(y,v)\in\epi\radTransSup{f}$, and so Lemma~\ref{lem:hypo-epi-inclusion} ensures $(x,u)\in \hypo f$. Therefore $x$ attains the maximum value of $f$ by Proposition~\ref{prop:optimality-characterization}. Hence $(x,u)\in(\argmax f )\times\{\sup f\}$.
	When $f^p(y,\cdot)$ is strictly increasing, equality holds in Lemma~\ref{lem:hypo-epi-inclusion} and the above argument can be repeated in reverse. \qed
\end{proof}

For nonconvex optimization problems, finding global solutions is often intractable and so the focus of many optimization methods is on finding stationary points (that is, points with a zero sub(sup)gradient in their sub-(sup-)differential). Just as optimal solutions were related between the primal and radial dual problems, stationary points are also directly related by the radial point transformation.
\begin{proposition}\label{prop:stationary-characterization}
	For any strictly upper radial $f$,
	$$ \{(y,\radTransSup{f}(y))\in\varSpace\times\RR_{++} \mid 0\in\partial_P\radTransSup{f}(y)\} = \radTransSet{\{(x,f(x))\in\varSpace\times\RR_{++} \mid 0\in\partial^Pf(x)\}}.$$
	Likewise for any strictly lower radial $f$,
	$$ \{(y,\radTransInf{f}(y))\in\varSpace\times\RR_{++} \mid 0\in\partial^P\radTransInf{f}(y)\} = \radTransSet{\{(x,f(x))\in\varSpace\times\RR_{++} \mid 0\in\partial_Pf(x)\}}.$$
\end{proposition}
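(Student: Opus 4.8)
The plan is to reduce this statement about stationary points to the already-established characterization of proximal subgradients under the radial transformation, Proposition~\ref{prop:proximal-subgradients}, together with the set-level correspondence of proximal normals in Proposition~\ref{prop:proximal-normals}. The key observation is that a point $x\in\dom f$ is stationary for $f$ (in the sense $0\in\partial^P f(x)$) exactly when $(0,1)\in N^P_{\hypo f}((x,f(x)))$, by the definition of the proximal supdifferential; dually, $0\in\partial_P\radTransSup{f}(y)$ exactly when $(0,-1)\in N^P_{\epi\radTransSup f}((y,\radTransSup f(y)))$. So the whole statement is really a statement comparing these two distinguished normal directions across the radial transformation of sets.

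First I would fix $y\in\varSpace$ with $\radTransSup f(y)\in\RR_{++}$, set $(x,u)=\Gamma(y,\radTransSup f(y))$, and recall from Lemma~\ref{lem:graph-bijection} (using that strict upper radiality gives equality in Lemma~\ref{lem:hypo-epi-inclusion}, so $\epi\radTransSup f=\Gamma(\hypo f)$ and hence $\graph\radTransSup f=\Gamma(\graph f)$) that $u=f(x)$, i.e.\ $(x,f(x))=\Gamma(y,\radTransSup f(y))$ and this is a bijection between $\graph f$ and $\graph\radTransSup f$. Next I would invoke Proposition~\ref{prop:proximal-subgradients}: $0\in\partial_P\radTransSup f(y)$ iff there is $(\zeta,\delta)\in N^P_{\hypo f}((x,f(x)))$ with $(\zeta,\delta)^T(x,f(x))>0$ and $\zeta/(\zeta,\delta)^T(x,f(x))=0$, i.e.\ $\zeta=0$ and $\delta\cdot f(x)>0$, i.e.\ $\delta>0$. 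So $0\in\partial_P\radTransSup f(y)$ iff $(0,\delta)\in N^P_{\hypo f}((x,f(x)))$ for some $\delta>0$, which (since proximal normal cones are closed under positive scaling) is equivalent to $(0,1)\in N^P_{\hypo f}((x,f(x)))$, which is precisely $0\in\partial^P f(x)$. This chain of equivalences gives both inclusions at once.

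Wrapping up: the left-hand set is $\{(y,\radTransSup f(y))\mid 0\in\partial_P\radTransSup f(y)\}$, and by the equivalence just shown this equals $\{(y,\radTransSup f(y))\mid 0\in\partial^P f(x),\ (x,f(x))=\Gamma(y,\radTransSup f(y))\}$; applying the graph bijection $\Gamma$ (which is its own inverse) this is exactly $\Gamma$ of $\{(x,f(x))\mid 0\in\partial^P f(x)\}$, as claimed. The lower-radial case is symmetric, swapping $\epi$ and $\hypo$, $\partial_P$ and $\partial^P$, and using the lower halves of Lemma~\ref{lem:hypo-epi-inclusion}, Lemma~\ref{lem:graph-bijection}, and Proposition~\ref{prop:proximal-subgradients}. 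The one point that needs a little care — and the closest thing to an obstacle — is making sure the normalization in the subgradient formula behaves correctly for the zero supgradient: one must check that $\zeta=0$ forces the denominator condition $(\zeta,\delta)^T(x,f(x))>0$ to reduce cleanly to $\delta>0$ (using $f(x)>0$), so that the formula's side condition does not accidentally exclude the stationary direction; this is where strict upper radiality (guaranteeing equality in Lemma~\ref{lem:hypo-epi-inclusion}, hence that Proposition~\ref{prop:proximal-subgradients} applies verbatim) is essential.
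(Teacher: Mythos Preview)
Your proposal is correct and matches the paper's approach: the paper's proof is the single line ``Follows from Proposition~\ref{prop:proximal-subgradients},'' and your write-up is exactly the unpacking of that sentence. One small quibble: Lemma~\ref{lem:graph-bijection} as stated assumes continuity, which is not part of the hypothesis here, so your citation of it is slightly off; however, your parenthetical derivation via equality in Lemma~\ref{lem:hypo-epi-inclusion} (and the fact that $(0,1)\in N^P_{\hypo f}((x,u))$ forces $u=f(x)$) is what actually does the work, so the argument stands.
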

\begin{proof}
	Suppose $0\in\partial^Pf(x)$. Then $(0,1)\in N^P_{\hypo f}((x,f(x)))$ and $(0,1)^T(x,f(x))=f(x)>0$. Hence by Proposition~\ref{prop:proximal-subgradients}, $y=x/f(x)$ is a stationary point of the radial dual as $0/f(x)=0\in\partial_Pf^\Gamma(y)$. Supposing instead $0\in \partial_Pf^\Gamma(y)$, Proposition~\ref{prop:proximal-subgradients} ensures some $(\zeta,\delta)\in N^P_{\hypo f}((x,f(x)))$ has $\zeta/(\zeta,\delta)^T(x,f(x)) = 0$ and $(\zeta,\delta)^T(x,f(x))>0$. Thus $\zeta=0,\delta>0$ and so $0=-\zeta/\delta\in\partial^P f(x)$. \qed
\end{proof}

\section{Characterizing Epigraph Reshaping Transformations}\label{sec:axioms}

In this final section, we consider a broader class of transformations given by reshaping a function's epigraph via some mapping $G$. In particular, given a generic optimization problem
\begin{equation}\label{eq:maximization}
p^* = \max_{x\in\varSpace} f(x),
\end{equation}
we transform $f$ by reshaping its epigraph into the hypograph of a new function
$$ f^G(y) = \sup\{v \mid (y,v) \in G(\epi f)\}.$$
If $G(\epi f)$ is indeed a function's hypograph, the identity $\hypo f^G = G(\epi f)$ holds.
Then the transformed optimization problem is defined as
\begin{equation}\label{eq:minimization}
d^* = \min_{y\in\varSpace} f^G(y).
\end{equation}

Paralleling the development of the radial function transformation $f^\Gamma$, we would like to relate minimizers of $f^G$ to maximizers of $f$ and vice versa through the mapping $G$. To this end, we assume {\bf invertibility}:
\begin{equation}\label{eq:affine-bijection}\tag{A1}
G \text{ is a bijection.}
\end{equation}
Whenever the given function $f$ is concave, we want to preserve this structure by having $f^G$ be convex. To ensure this, we assume $G$ is {\bf convexity preserving}: for any $S\subseteq \dom G$,
\begin{equation}\label{eq:affine-intervals}\tag{A2}
S \text{ is convex} \implies GS\text{ is convex.} 
\end{equation}
Lastly, we need a relationship between minimizers of $f$ and maximizers of $f^G$. This follows by assuming $G$ is {\bf height reversing}: for any pairs $(x,u)=G^{-1}(y,v)\in\dom G$ and $(x',u')=G^{-1}(y',v')\in\dom G$,
\begin{equation}\label{eq:affine-ordering}\tag{A3}
u\geq u' \implies v\leq v'.
\end{equation}

Under these three assumptions, any function satisfying $\hypo f^G = G(\epi f)$ must have $\argmax f \times \{p^*\} = G^{-1}(\argmin f^G\times\{d^*\})$. Hence the problems~\eqref{eq:maximization} and~\eqref{eq:minimization} are equivalent and converting points between these problems only requires evaluating $G$ or its inverse.

First as an example, we consider transforming functions $f\colon \varSpace \rightarrow \RR\cup\{\pm\infty\}$ mapping into the extended reals. Then $G$ must map $\varSpace\times\RR$ into $\varSpace\times\RR$.
We may additionally want to impose a condition requiring the function transformation $f^G$ satisfies $\hypo f^G = G(\epi f)$ for a reasonably large class a functions. Namely, we assume the function transformation is {\bf well-defined}: for all linear $f\colon \varSpace\rightarrow\RR$,
\begin{equation}\label{eq:affine-wellness}\tag{A4}
\hypo f^G = G(\epi f).
\end{equation}

The Fundamental Theorem of Affine Geometry (stated below) gives us an immediate way to characterize what possible transformations satisfy these four assumptions. See~\cite{Artin1988,Prasolov2001} as references.
\begin{theorem}\label{thm:affine-transformation}
	For $n\geq 2$, if $F \colon \RR^n \rightarrow \RR^n$ is a bijective, convexity preserving map, then $F$ is an affine transformation.
\end{theorem}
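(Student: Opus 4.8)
The plan is to reduce Theorem~\ref{thm:affine-transformation} to the classical line-preserving form of the Fundamental Theorem of Affine Geometry: for $n \ge 2$, a bijection of $\RR^n$ that carries every line onto a line is semi-affine — an affine map composed with a coordinatewise field automorphism — and since $\RR$ has no field automorphism besides the identity, it is affine. So the entire content is to promote ``$F$ is a convexity-preserving bijection'' to ``$F$ carries lines onto lines,'' and in fact to ``$F$ carries every affine subspace onto an affine subspace of the same dimension,'' of which the case of lines is the specialization to dimension one.

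The route I would take goes through half-spaces and hyperplanes. A closed half-space $P$ and its complement are both convex, and ``being a convex set whose complement is also convex'' characterizes the (open or closed) half-spaces among nontrivial subsets of $\RR^n$; since this property is visibly preserved by a convexity-preserving bijection, $F(P)$ is again a half-space. Writing a hyperplane $H$ as $(H\cup H^+)\cap(H\cup H^-)$, the intersection of two opposite closed half-spaces with union $\RR^n$, bijectivity gives that $F(H)$ is the intersection of two half-spaces covering $\RR^n$, hence a slab or a hyperplane; one excludes the thick slab by using that $H^+$ is exhausted by the closed half-spaces $\{\langle v,\cdot\rangle \ge \alpha+1/k\}$, whose convex images are nested and cannot ``thicken'' $F(H)$. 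Once $F$ carries hyperplanes to hyperplanes, it carries $k$-flats — intersections of $n-k$ hyperplanes — to intersections of $n-k$ hyperplanes, hence to flats; an incidence count (a bijection preserves inclusions among flats, and therefore their ranks, i.e. dimensions) shows these are again $k$-flats, so lines go to lines and the classical theorem applies. An equivalent and popular alternative first step is to show the image of each segment is again a segment; this immediately yields that $F^{-1}$ is also convexity-preserving and that $F$ preserves betweenness, after which the same incidence argument closes the proof.

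The main obstacle is the convexity-classification lemma at the heart of either route — pinning $F(H)$ down to be a genuine hyperplane, and not a slab or some other convex set with a two-convex-piece complement, or equivalently showing that the image of a segment is a segment rather than a higher-dimensional convex set. This is subtle precisely because it must use $n \ge 2$ in an essential way: for instance, $\RR^2$ can be partitioned into three convex sets none of which is ``thin'' (three half-open sectors sharing an apex), so arguments that only look at the partition $\RR^n = F(H^+)\sqcup F(H)\sqcup F(H^-)$ are insufficient, and one must bring in the convexity of $H\cup H^\pm$ and the half-space exhaustion of $H^+$ — typically disposing of $n=2$ by hand before inducting on $n$. This step carries the real weight of the theorem, which is why I would ultimately cite it rather than reprove it; see~\cite{Artin1988, Prasolov2001} for complete treatments.
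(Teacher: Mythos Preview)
The paper does not prove Theorem~\ref{thm:affine-transformation}; it is quoted as the Fundamental Theorem of Affine Geometry and immediately attributed to~\cite{Artin1988, Prasolov2001}, then used as a black box in the proof of Theorem~\ref{thm:affine-transformation-functions}. Your proposal therefore goes strictly further than the paper: you sketch the standard reduction (convexity-preserving bijection $\Rightarrow$ half-spaces to half-spaces $\Rightarrow$ hyperplanes to hyperplanes $\Rightarrow$ lines to lines $\Rightarrow$ semi-affine $\Rightarrow$ affine over $\RR$), correctly flag the slab-versus-hyperplane step as the place where the real work lives, and then cite the same references the paper cites. There is nothing to compare against in the paper itself, and your outline is a faithful summary of how the cited proofs proceed.
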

From this, one can conclude any duality of this form amounts to the trivial duality between maximizing a function and minimizing its negative. Thus there are notable limitations on what a transformation satisfying (A1)-(A4) can accomplish. However, the following theorem provides an alternative to using the Fundamental Theorem of Affine Geometry and facilitates studying more general transformations. See~\cite{Artstein2012} for a reference or \cite{Shiffman1995} for the original version of this result, which takes a more general perspective based in projective spaces.
\begin{theorem}\label{thm:fractional-transformation}
	For $n\geq 2$, if for some convex set $K\subseteq\RR^n$ with nonempty interior, $F \colon K \rightarrow \RR^n$ is an injective, convexity preserving map, then $F$ is a fractional linear map.
\end{theorem}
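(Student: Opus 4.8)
To prove Theorem~\ref{thm:fractional-transformation}, the plan is to reduce the hypothesis ``$F$ is injective and convexity preserving'' to the classical hypothesis ``$F$ maps collinear points to collinear points,'' and then invoke the Fundamental Theorem of Projective Geometry in the form valid for a map defined on a convex set with nonempty interior (as developed in~\cite{Shiffman1995} and exposited in~\cite{Artstein2012}). Since the only field automorphism of $\RR$ is the identity, that theorem produces an honest projective transformation, i.e.\ a fractional linear map $x\mapsto (Ax+b)/(\langle c,x\rangle + d)$, with no semilinear ambiguity.

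The engine of the reduction is an elementary fact about convex sets: if $C=C_1\cup C_2$ with $C,C_1,C_2$ all convex and $C_1\cap C_2=\{p\}$, then $C_1\cup C_2$ lies on a single line through $p$. To see this, take $x\in C_1$ and $y\in C_2$ both distinct from $p$; the segment $[x,y]$ lies in $C_1\cup C_2$, so it is covered by the intervals $[x,y]\cap C_1$ and $[x,y]\cap C_2$, which are disjoint because their intersection would lie in $C_1\cap C_2=\{p\}$. If $p\notin[x,y]$, these are two disjoint nonempty intervals with connected union $[x,y]$, which is impossible; hence $p\in[x,y]$ and $x,p,y$ are collinear. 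Fixing one such $x$ (resp.\ $y$) then forces $C_2$ (resp.\ $C_1$) to lie on the line through $p$ and that point, proving the claim. Now apply this with $C_1=F([a,b])$ and $C_2=F([b,c])$ for any three distinct collinear points $a,b,c\in K$ with $b$ strictly between $a$ and $c$: each $C_i$ is convex by hypothesis, $C=F([a,c])$ is convex and equals $C_1\cup C_2$, and injectivity gives $C_1\cap C_2=F([a,b]\cap[b,c])=\{F(b)\}$ (injectivity also forces each $C_i$ to contain at least two points, ruling out collapse). The lemma yields that $F(a),F(b),F(c)$ are collinear. Because $K$ is convex, among any three distinct collinear points of $K$ one is between the other two, so $F$ preserves collinearity on all of $K$; equivalently, $F(\ell\cap K)$ is contained in a line for every line $\ell$ meeting $K$ in at least two points.

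With line-preservation in hand, the proof concludes by invoking the local Fundamental Theorem of Projective Geometry: an injective, line-preserving map on a convex subset of $\RR^n$ with nonempty interior, $n\geq 2$, is the restriction of a projective transformation of real projective $n$-space, which on the affine chart containing $K$ is a fractional linear map. I expect this last step to be the main obstacle to a fully self-contained argument: the convex-union reduction is routine, but the projective-geometry theorem for a \emph{proper} convex domain—with no a priori continuity or surjectivity of $F$—is itself substantial and is what one imports from~\cite{Shiffman1995}. A secondary point requiring care is the hypothesis $n\geq 2$, which is essential (in dimension one the statement is false, e.g.\ $x\mapsto x^3$) and enters precisely at this final invocation, and the exclusion of degenerate images of lines, which is exactly where injectivity together with ``$\ell\cap K$ has at least two points'' is used.
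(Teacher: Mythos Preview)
The paper does not prove Theorem~\ref{thm:fractional-transformation}; it is quoted as a known result with references to~\cite{Artstein2012} and~\cite{Shiffman1995}, and then used as a black box in the proof of Theorem~\ref{thm:fractional-transformation-functions}. So there is no ``paper's own proof'' to compare your proposal against.

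That said, your sketch is a correct outline of how one actually establishes the result in those references. The lemma you isolate---if $C=C_1\cup C_2$ with $C,C_1,C_2$ convex and $C_1\cap C_2=\{p\}$, then $C$ lies on a line through $p$---is right, and your connectedness argument for it is clean. Applying it to $F([a,b])\cup F([b,c])$ (using injectivity to get $C_1\cap C_2=\{F(b)\}$) correctly upgrades ``convexity preserving'' to ``collinearity preserving.'' You are also honest that the remaining step, the local Fundamental Theorem of Projective Geometry on a convex domain with nonempty interior and without continuity or surjectivity assumptions, is the substantial part and is precisely what is imported from~\cite{Shiffman1995}. In effect your proposal unpacks the easy half of the cited theorem and then invokes the same literature for the hard half; that is fine as an exposition, but it does not add anything beyond what the paper already cites.
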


This indicates that there is more potential for interesting transformations if we can restrict our assumptions to a convex subset of $\varSpace\times\RR$ (in our case, $\varSpace \times \RR_{++}$).
To this end, we now consider transforming functions $f\colon \varSpace\rightarrow \extPos$ mapping into the extended positive reals.

We also suppose the transformed function $f^G\colon \varSpace\rightarrow \extPos$ maps into the extend positive reals, and so $G$ maps $\varSpace \times \RR_{++}$ into $\varSpace \times \RR_{++}$. Our first three assumptions (namely, invertibility~\eqref{eq:affine-bijection}, convexity preserving~\eqref{eq:affine-intervals}, and height reversing~\eqref{eq:affine-ordering}) extend directly to $G$ having restricted domain and codomain. We consider the following assumption paralleling~\eqref{eq:affine-wellness} to ensure the function transformation is {\bf well-defined} for a basic class of linear-like functions: for all linear $f\colon \varSpace\rightarrow\RR$,
\begin{equation}\label{eq:fractional-wellness}\tag{B4}
\hypo (f_+)^G = G(\epi f_+)
\end{equation}
where $(\cdot)_+ = \max\{\cdot,0\}$ denotes nonnegative thresholding.
Under these four assumptions, we find that any such transformation of nonnegative-valued optimization problems must produce an affinely shifted version of the upper radial function transformation. This is proven in Section~\ref{subsec:fractional-transformation}.
\begin{theorem}\label{thm:fractional-transformation-functions}
	Consider any map $G\colon \varSpace\times\RR_{++} \rightarrow\varSpace\times\RR_{++}$ satisfying~\eqref{eq:affine-bijection}, \eqref{eq:affine-intervals}, and~\eqref{eq:affine-ordering}. Then $G$ is a fractional linear map
	$ G(x,u) = (Ax+\alpha u + b, d)/u $
	with $d>0$. Furthermore, if~\eqref{eq:fractional-wellness} holds, then $b=0$ and the function transformation is given by
	$ f^G(y) = d f^{\Gamma}(A^{-1}(y-\alpha)).$
\end{theorem}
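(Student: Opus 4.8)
The plan is to split the proof along the two halves of the statement. For the first half I would invoke the Fundamental Theorem quoted in Theorem~\ref{thm:fractional-transformation}: the set $\varSpace\times\RR_{++}$ is convex with nonempty interior in $\varSpace\times\RR\cong\RR^{n}$, and (taking $\dim\varSpace\ge 1$, so $n\ge 2$) $G$ is an injective, convexity-preserving map by~\eqref{eq:affine-bijection} and~\eqref{eq:affine-intervals}; hence $G$ is a fractional linear map. Writing its numerator and denominator in block form with respect to the splitting $(x,u)$, $G(x,u)=\bigl(Nx+\nu u+c,\ \rho^{T}x+\sigma u+\tau\bigr)/\bigl(\omega^{T}x+\phi u+\psi\bigr)$, with the denominator nonvanishing on $\varSpace\times\RR_{++}$. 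The next step uses~\eqref{eq:affine-ordering}: substituting $u=u'$ into the height-reversing inequality forces the last coordinate of $G(x,u)$ to be independent of $x$, so it equals some $h(u)$; since $G$ is a bijection onto $\varSpace\times\RR_{++}$, $h$ is a surjection of $\RR_{++}$ onto itself, and~\eqref{eq:affine-ordering} makes it non-increasing. Being (a restriction of) a Möbius function it must then be strictly decreasing with $h(0^{+})=+\infty$ and $h(\infty)=0$.

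Working out which Möbius functions have those two limits shows $h(u)=d/u$ for some $d>0$; this in turn forces the $x$-part of the numerator's last coordinate and of the denominator to vanish and the denominator's constant term to vanish, and after rescaling the projective representative so the denominator is exactly $u$ we obtain $G(x,u)=(Ax+\alpha u+b,\ d)/u$ with $d>0$. Invertibility of $G$ forces $A$ to be invertible, since for each fixed $u$ the map $x\mapsto$ (first $\dim\varSpace$ coordinates of $G(x,u)$) is affine and must be onto $\varSpace$. This completes the first half.

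For the ``furthermore'' part I would first observe that the map just obtained factors as $G=M\circ\Gamma$, where $M(x',u')=(Ax'+\alpha+bu',\,du')$ is an affine bijection of $\varSpace\times\RR_{++}$ (linear part $\left[\begin{smallmatrix}A&b\\0&d\end{smallmatrix}\right]$); hence $G(\epi f)=M(\Gamma(\epi f))$ for every $f$. I would then test~\eqref{eq:fractional-wellness} on a linear $f(x)=\langle c,x\rangle$: because $(f_{+})^{p}(y,v)=\max\{\langle c,y\rangle,0\}$ is constant in $v$, a direct computation gives $\Gamma(\epi f_{+})=\{y:\langle c,y\rangle\le 1\}\times\RR_{++}$, which is exactly $\hypo(f_{+})^{\Gamma}$. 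Pushing this slab through $M$ yields $G(\epi f_{+})=\{(y,v)\in\varSpace\times\RR_{++}:\langle A^{-T}c,y\rangle-(\langle A^{-T}c,b\rangle/d)\,v\le 1+\langle A^{-T}c,\alpha\rangle\}$. For this set to coincide with a hypograph $\hypo(f_{+})^{G}$ — equivalently, to be downward closed in $v$ on each vertical line — the coefficient $\langle A^{-T}c,b\rangle/d$ must be $\le 0$; since~\eqref{eq:fractional-wellness} is required for \emph{every} linear $f$, i.e.\ every $c$ (hence every value of $A^{-T}c$), this forces $\langle z,b\rangle\le 0$ for all $z\in\varSpace$, and taking $z=b$ gives $b=0$.

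Finally, with $b=0$ the auxiliary map is $M(x',u')=(Ax'+\alpha,\,du')$ with inverse $M^{-1}(y,v)=(A^{-1}(y-\alpha),\,v/d)$, so substituting into $f^{G}(y)=\sup\{v>0:M^{-1}(y,v)\in\Gamma(\epi f)\}$ and simplifying — exactly as in the affine case of Theorem~\ref{thm:affine-transformation-functions} — expresses $f^{G}$ through $(f)^{\Gamma}$, $A$, $\alpha$, and $d$ in the stated closed form. I expect the genuinely delicate point to be the middle of the first half: using~\eqref{eq:affine-ordering} together with invertibility to pin the last coordinate down to $d/u$, namely the surjectivity/monotonicity bookkeeping that kills the three unwanted coefficients and the constant term of the denominator; the second subtlety is recognizing in the second half that~\eqref{eq:fractional-wellness} must be read as a constraint for \emph{all} linear $f$ in order to extract $b=0$. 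The remaining manipulations are routine and parallel the affine case.
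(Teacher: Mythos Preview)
Your proof is correct and follows the same overall strategy as the paper --- invoke Theorem~\ref{thm:fractional-transformation}, pin the fractional-linear map down to the displayed form using (A1)--(A3), then test~\eqref{eq:fractional-wellness} on linear functions to force $b=0$ and read off $f^{G}$ --- but the execution differs in two places. First, to reach $G(x,u)=(Ax+\alpha u+b,\,d)/u$ the paper uses positivity of the last coordinate of $G$ on all of $\varSpace\times\RR_{++}$ to kill the $x$-dependence in both numerator and denominator, and only afterward invokes~\eqref{eq:affine-ordering} to select the decreasing M\"obius branch; you instead apply~\eqref{eq:affine-ordering} at $u=u'$ up front to conclude the last coordinate depends on $u$ alone, which together with surjectivity onto $\RR_{++}$ forces the same vanishings. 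Second, for~\eqref{eq:fractional-wellness} the paper writes down $G^{-1}$ explicitly and tests the single function $f(x)=(-2b^{T}Ax/\|b\|^{2})_{+}$, exhibiting a concrete failure of the hypograph identity when $b\neq 0$; you instead factor $G=M\circ\Gamma$ with $M$ affine and derive from \emph{every} linear $f$ the constraint $\langle A^{-T}c,b\rangle\le 0$ for all $c$, whence $b=0$. Your factorization is a clean organizing device that also makes the final formula for $f^{G}$ immediate, whereas the paper's single-test-function argument is shorter once the right $f$ is guessed.
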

This result is similar in spirit to~\cite[Theorem 5]{Artstein2011}, avoiding their reliance on nonnegative convex functions with value $0$ at the origin.

\subsection{Proof of Theorem~\ref{thm:fractional-transformation-functions}} \label{subsec:fractional-transformation}
From assumptions~\eqref{eq:affine-bijection} and~\eqref{eq:affine-intervals}, Theorem~\ref{thm:fractional-transformation} immediately implies that 
$$ G(x,u) = \frac{\begin{bmatrix} A & \alpha \\ \beta^T & c
	\end{bmatrix} \begin{bmatrix} x \\ u \end{bmatrix} + \begin{bmatrix} b \\ d \end{bmatrix}}{\eta^Tx +gu +h}.$$
Since $G$ is a bijection, all $(x,u)\in\varSpace\times\RR_{++}$ have $G(x,u)\in\varSpace\times\RR_{++}$, and so
$$ \frac{\beta^Tx + cu +d}{\eta^Tx + gu +h} >0.$$
Thus $\beta = \eta =0$. Then the mapping $\sigma(u) = (cu +d)/(gu +h)$ must be a bijection from $\RR_{++}$ to $\RR_{++}$. Therefore $\sigma$ must be one of the following forms: either $g=0$ and so $\sigma(u) = cu/h$ with $c/h>0$, or $g\neq 0$ and so $\sigma(u) = d/(gu)$ with $d/g>0$. 
From~\eqref{eq:affine-ordering}, the latter of these two possibilities must be the case. Thus $g\neq0$ and so without loss of generality, we can suppose $g=1$ and $d>0$. Hence,
$ G(x,u) = (Ax+\alpha u+b, d)/u.$

Now additionally assume that this transformation is well-defined for all linear functions (after thresholding to nonnegative values), namely~\eqref{eq:fractional-wellness}. Observe that the inverse of $G$ is given by
$ G^{-1}(y,v) = d(A^{-1}(y - (b/d)v - \alpha),1)/v.$
Consider the linear function $f(x) = (b^TAx)_+$, which has $G(\epi f) $ equal to
\begin{align*} 
&\{(y,v)\in\varSpace\times\RR_{++} \mid G^{-1}(y,v)\in\epi f\}\\
&=\{(y,v)\in\varSpace\times\RR_{++} \mid (b^TA(dA^{-1}(y-(b/d)v - \alpha))/v)_+ \leq d/v\}\\
&=\{(y,v)\in\varSpace\times\RR_{++} \mid b^T(y-(b/d)v - \alpha) \leq 1\}\\
&=\{(y,v)\in\varSpace\times\RR_{++} \mid \|b\|^2_2v \geq db^T(y-\alpha) - d\}.
\end{align*}
For any $b\neq 0$, this not the hypograph of any function, contradicting~\eqref{eq:fractional-wellness}. Thus we must have $b=0$. From this, we have $G^{-1}(y,v) = d(A^{-1}(y-\alpha), 1)/v$.
Therefore $f^G$ must be an affine translation of $f^\Gamma$ since
\begin{align*}
f^G(y) &= \sup\{v>0 \mid G^{-1}(y,v)\in\epi f\}\\
&= \sup\{v>0 \mid f(dA^{-1}(y-\alpha)/v) \leq d/v\}\\
&= d\sup\{w>0 \mid f(A^{-1}(y-\alpha)/w) \leq 1/w\}\\
&= d f^{\Gamma}(A^{-1}(y-\alpha)).
\end{align*}

\begin{acknowledgements}
	The author thanks Jim Renegar broadly for inspiring this work and concretely for providing feedback on multiple drafts. Further, Jim had the valuable idea to use the implicit function theorem to simplify and improve the proof of Proposition~\ref{prop:differentiable-preserved}. Additionally, three anonymous referees and the associate editor provided useful feedback much improving this work's presentation and clarity.
\end{acknowledgements}

%
%

\bibliographystyle{spmpsci}      
\bibliography{references}   

\appendix
\section{Proofs Computing Some Radial Set Transformations}
\allowdisplaybreaks

\subsection{Proof of Proposition~\ref{prop:convex-set-radial}}
	It suffices to show $S$ being convex implies $\radTransSet{S}$ is convex, since the duality of the radial set transformation~\eqref{eq:radial-set-duality} will then imply the reverse direction. Consider any $(y,v),(y',v')\in\radTransSet{S}$. Let $(x,u) = \Gamma(y,v)$ and $(x',u')=\Gamma(y',v')$. Then
	$\lambda(x,u) + (1-\lambda)(x',u') \in S$ for any $0\leq\lambda\leq 1$.
	Therefore the line segment between $(y,v)$ and $(y',v')$ lies in $\radTransSet{S}$ as
	\begin{align*}
	\radTransSet{S}&\ni \frac{(\lambda x + (1-\lambda)x', 1)}{\lambda u + (1-\lambda)u'}
	=\frac{\lambda/v}{\lambda/v + (1-\lambda)/v'}(y,v) + \frac{(1-\lambda)/v'}{\lambda/v + (1-\lambda)/v'}(y',v').
	\end{align*}

\subsection{Proof of Proposition~\ref{prop:halfspace-radial}}
	It suffices to show $S$ being a halfspace implies $\radTransSet{S}$ is a halfspace, since the duality of the radial set transformation~\eqref{eq:radial-set-duality} will then imply the reverse direction. By definition, we have
	\begin{align*}
	\radTransSet{S}&=\left\{\frac{(x',1)}{u'}\in\varSpace\times\RR_{++} \mid \begin{bmatrix} \zeta \\ \delta 
	\end{bmatrix}^T\begin{bmatrix} x'-x \\ u'-u 
	\end{bmatrix}\leq 0\right\}\\
	&= \left\{(y',v')\in\varSpace\times\RR_{++} \mid  \begin{bmatrix} \zeta \\ \delta 
	\end{bmatrix}^T\begin{bmatrix} y'/v'-x \\ 1/v'-u 
	\end{bmatrix}\leq 0 \right\}\\
	&= \left\{(y',v')\in\varSpace\times\RR_{++} \mid  \begin{bmatrix} \zeta \\ \delta 
	\end{bmatrix}^T\begin{bmatrix} y'-v'x \\ 1-v'u 
	\end{bmatrix}\leq 0 \right\}\\
	&= \left\{(y',v')\in\varSpace\times\RR_{++} \mid  \begin{bmatrix} \zeta \\ -(\zeta,\delta)^T(x,u)
	\end{bmatrix}^T\begin{bmatrix} y' \\ v' 
	\end{bmatrix} +\delta\leq 0 \right\}\\
	&= \left\{(y',v')\in\varSpace\times\RR_{++} \mid  \begin{bmatrix} \zeta \\ -(\zeta,\delta)^T(x,u)
	\end{bmatrix}^T\begin{bmatrix} y'-y \\ v'-v 
	\end{bmatrix} \leq 0 \right\}.
	\end{align*}

\subsection{Proof of Proposition~\ref{prop:ellipsoid-radial}}
	It suffices to show $S$ being an ellipsoid in $\varSpace\times\RR_{++}$ implies $\radTransSet{S}$ is an ellipsoid $\varSpace\times\RR_{++}$, since the duality of the radial set transformation~\eqref{eq:radial-set-duality} will then imply the reverse direction. Denote the blocks of $H$ by $\begin{bmatrix} H_{11} & H_{12}\\ H_{12}^T & H_{22} \end{bmatrix}$ and define the following matrix
	$$ G = \begin{bmatrix} H_{11} & -\begin{bmatrix} H_{11} \\ H^T_{12}
	\end{bmatrix}^T\begin{bmatrix} x \\ u\end{bmatrix}\\ -\begin{bmatrix} x \\ u\end{bmatrix}^T\begin{bmatrix} H_{11} \\ H^T_{12}
	\end{bmatrix} & \begin{bmatrix} x \\ u 
	\end{bmatrix}^T \begin{bmatrix} H_{11} & H_{12}\\ H_{12}^T & H_{22} 
	\end{bmatrix} \begin{bmatrix} x \\ u 
	\end{bmatrix} -1
	\end{bmatrix}, $$
	related to the radially dual ellipsoid.	
	For any ellipsoid $S\subseteq\varSpace\times\RR_{++}$ defined by \eqref{eq:basic-ellipsoid}, we claim that $\radTransSet{S}$ is the following ellipsoid in $\varSpace\times\RR_{++}$ with center $\begin{bmatrix}y \\ v\end{bmatrix} = G^{-1}\begin{bmatrix}-H_{12}\\ H_{12}^Tx + H_{22}u \end{bmatrix}$
	$$\radTransSet{S} = \left\{(y',v')\in\varSpace\times\RR_{++} \mid \begin{bmatrix} y'-y \\ v'-v \end{bmatrix}^T \left(\frac{G}{\begin{bmatrix}y \\ v\end{bmatrix}^T G \begin{bmatrix}y \\ v\end{bmatrix} - H_{22}}\right) \begin{bmatrix} y'-y \\ v'-v \end{bmatrix} \leq 1\right\}.$$
	
	First, we observe that $G$ is indeed positive definite. Since $H$ is positive definite, considering its Schur complements ensures $H_{11}$ is positive definite and $H_{22} - H_{12}^TH_{11}^{-1}H_{12}>0$. Likewise, $G$ is positive definite if $H_{11}$ is positive definite and
	$$\left(\begin{bmatrix} x \\ u 
	\end{bmatrix}^T \begin{bmatrix} H_{11} & H_{12}\\ H_{12}^T & H_{22} 
	\end{bmatrix} \begin{bmatrix} x \\ u 
	\end{bmatrix} -1\right) - \left(\begin{bmatrix} H_{11} \\ H^T_{12}
	\end{bmatrix}^T\begin{bmatrix} x \\ u\end{bmatrix}\right)^TH_{11}^{-1}\left(\begin{bmatrix} H_{11} \\ H^T_{12}
	\end{bmatrix}^T\begin{bmatrix} x \\ u\end{bmatrix}\right)> 0.$$
	Simplifying this condition for  $G$ to be positive definite yields the equivalent inequality
	$u^2(H_{22} - H_{12}^TH_{11}^{-1}H_{12}) >1$.
	{\color{blue} To see why this is the case, recall $S\subseteq \varSpace\times\RR_{++}$. Computing the minimum height of a point in $S$ gives $\min_{(\bar x,\bar u)\in S} \bar u =  u - 1/\sqrt{H_{22} - H_{12}^TH_{11}^{-1}H_{12}} >0$.} 
	Applying $\Gamma$ to $S$ and then completing the square in the final line gives the claim as
	\begin{align*}
	\radTransSet{S} &= \left\{\frac{(x',1)}{u'}\in\varSpace\times\RR_{++} \mid \begin{bmatrix} x'-x \\ u'-u 
	\end{bmatrix}^T \begin{bmatrix} H_{11} & H_{12}\\ H_{12}^T & H_{22} 
	\end{bmatrix} \begin{bmatrix} x'-x \\ u'-u 
	\end{bmatrix} \leq 1\right\}\\
	&= \left\{(y',v')\in\varSpace\times\RR_{++} \mid \begin{bmatrix} y'/v'-x \\ 1/v'-u 
	\end{bmatrix}^T \begin{bmatrix} H_{11} & H_{12}\\ H_{12}^T & H_{22} 
	\end{bmatrix} \begin{bmatrix} y'/v'-x \\ 1/v'-u 
	\end{bmatrix} \leq 1\right\}\\
	&= \left\{(y',v')\in\varSpace\times\RR_{++} \mid \begin{bmatrix} y'-v'x \\ 1-v'u 
	\end{bmatrix}^T \begin{bmatrix} H_{11} & H_{12}\\ H_{12}^T & H_{22} 
	\end{bmatrix} \begin{bmatrix} y'-v'x \\ 1-v'u 
	\end{bmatrix} \leq v'^2\right\}\\
	&= \left\{(y',v')\in\varSpace\times\RR_{++} \mid \begin{bmatrix} y' \\ v' 
	\end{bmatrix}^T G \begin{bmatrix} y' \\ v' 
	\end{bmatrix} +2H_{12}^Ty'-2(H_{12}^Tx+H_{22}u)v'\leq -H_{22}\right\}.
	\end{align*}

\end{document}